\documentclass[11pt,amsfonts]{article}
\usepackage[margin=1in]{geometry}  % set the margins to 1in on all sides
\usepackage{graphicx}              % to include figures

\usepackage{amsmath, amssymb, amsthm, epsfig}               % great math stuff
\usepackage{enumerate}
\usepackage{amsfonts}              % for blackboard bold, etc
\usepackage{amsthm}                % better theorem environments
\usepackage{amsthm}
\usepackage{enumerate}
\theoremstyle{plain}
\usepackage{color}
\pretolerance=10000

\newtheorem{corollary}{Corollary}[section]
\newtheorem{definition}[corollary]{Definition}

\newtheorem{lemma}[corollary]{Lemma}
\newtheorem{prp}[corollary]{Proposition}
\newtheorem{remark}[corollary]{Remark}
\newtheorem{thm}[corollary]{Theorem}

\newfont{\sBlackboard}{msbm10 scaled 900}

\newcommand{\mylabel}[1]{\label{#1}
	\ifx\undefined\stillediting
	\else \fbox{$#1$}\fi }
\newcommand{\BE}{\begin{equation}}

\newcommand{\EEQ}{\end{equation}}
\newcommand{\rfb}[1]{\mbox{\rm
		(\ref{#1})}\ifx\undefined\stillediting\else:\fbox{$#1$}\fi}

\newfont{\Blackboard}{msbm10 scaled 1200}

\newfont{\roma}{cmr10 scaled 1200}

\newcommand{\bb}{\begin{equation}}
\newcommand{\bbb}{\end{equation}}

\DeclareMathOperator{\meas}{meas}

\newcommand{\mm}    {{\hbox{\hskip 0.5pt}}}

\newcommand{\bluff} {{\hbox{\raise 15pt \hbox{\mm}}}}
%

%

%
% The following commands create big symbols for figures.
 
scaled\magstep2

\makeatletter
\def\section{\@startsection {section}{1}{\z@}{-3.5ex plus -1ex minus
		-.2ex}{2.3ex plus .2ex}{\large\bf}}
\makeatother

%%%--------------------------------------------------------------------------------------------------

\begin{document}
\title{Embedding theorems in the fractional Orlicz-Sobolev space and applications to non-local problems}
\author{Sabri Bahrouni and Hichem Ounaies}

\maketitle

\begin{abstract}
In the present paper, we deal with a new continuous and compact embedding theorems for the fractional Orlicz-Sobolev spaces, also, we study the existence of infinitely many nontrivial solutions for a class of non-local fractional Orlicz-Sobolev Schr\"{o}dinger equations whose simplest prototype is
$$(-\triangle)^{s}_{m}u+V(x)m(u)=f(x,u),\ x\in\mathbb{R}^{d},$$
where $0<s<1$, $d\geq2$ and $(-\triangle)^{s}_{m}$ is the fractional $M$-Laplace operator.
The proof is based on the variant Fountain theorem established by Zou. %Our results can be seen as a continuation of the paper by Bonder and Salort, Fractional order Orlicz-Sobolev spaces, {\it Journal of Functional Analysis}, 2019, https://doi.org/10.1016/j.jfa.2019.04.003.
\end{abstract}
{\small \textbf{2010 Mathematics Subject Classification:} Primary: 35J60; Secondary: 35J91, 35S30, 46E35, 58E30.}\\

{\small \textbf{Keywords:} Fractional Orlicz-Sobolev space, Compact embedding theorem, Fractional $M-$Laplacian, Fountain Theorem.

\section{Introduction and main result}

In this paper, we are concerned with the study of the fractional $M$-Laplacian equation:
\begin{equation}\label{eq1}
(-\triangle)^{s}_{m}u+V(x)m(u)= f(x,u),\ x\in\mathbb{R}^{d}, \\
\end{equation}
where $(-\triangle)^{s}_{m}$ is the fractional $M$-Laplace operator,  $0<s<1$, $d\geq2$, $m:\mathbb{R}\rightarrow\mathbb{R}$ is an increasing homeomorphism, $V:\mathbb{R}^{d}\rightarrow\mathbb{R}$ and $f:\mathbb{R}^{d}\times \mathbb{R}\rightarrow\mathbb{R}$ are given functions.\\

In the last years, problem \eqref{eq1} has received a special attention for the case $s=1$ and $m(t)=t$, that is, when it is of the form
\begin{equation}\label{equa}
  -\triangle u+V(x)u= f(x,u),\ x\in\mathbb{R}^{d}.
\end{equation}
We do not intend to review the huge bibliography related to the equations like \eqref{equa}, we just emphasize that the potential $V:\mathbb{R}^{d}\rightarrow\mathbb{R}$ has a crucial role concerning the existence and behaviour of solutions. For example, when $V$ is radially symmetric, it is natural to look for radially symmetric solutions, see \cite{Strauss, Willem}. On the other hand, after the paper of Rabinowitz \cite{Rabinowitz} where the potential $V$ is assumed to be coercive, several different assumptions are adopted in order to obtain existence and multiplicity results (see \cite{Anouar, Bartch, Rad5, Zhang, ZXu}).

For the case $s=1$, problem \eqref{eq1} becomes
$$-\triangle_{m}u+V(x)m(u)= f(x,u),\ x\in\mathbb{R}^{d},$$ where the operator $\triangle_{m}u=\text{div}(m(|\nabla u|)|\nabla u|)$ named $M$-Laplacian.
This class of problems arises in a lot of applications, such as, {\it Nonlinear Elasticity, Plasticity, Generalized Newtonian Fluid, Non-Newtonian Fluid, Plasma Physics}. The reader can find more details involving this subject in \cite{Alves, Rad, Rad1, Rad2} and the references therein.\\

Notice that when $0<s<1$ and $m(t)=|t|^{p-2}t$, $p\geq2$, problem \eqref{eq1} gives back the fractional Schr\"{o}dinger equation
\begin{equation}\label{eq2}
  (-\triangle)^{s}_{p}u+V(x)|u|^{p-2}u= f(x,u),\ x\in\mathbb{R}^{d} ,
\end{equation}
where $ (-\triangle)^{s}_{p}$ is the non-local fractional $p$-Laplacian operator. The literature on non-local operators and on their applications is quite large. We can quote \cite{4,14,23,25,26} and the references therein.  We also refer to the recent monographs \cite{14,22} for a thorough variational approach of  non-local problems.
In the last decade, many several existence and multiplicity results have been obtained concerning the equation \eqref{eq2}, (see \cite{pucci, chang, Torres}).
In \cite{Rad 2}, the authors studied the existence of multiple solutions where the nonlinear term $f$ is assumed to have a superlinear behaviour at the origin and a sublinear decay at infinity.
In \cite{Ambrosio}, Vincenzo studied the existence of infinitely many solutions for the problem \eqref{eq2}, when $f$ is superlinear and $V$ can change sign.\\

Contrary to the classical fractional Laplacian Schr\"{o}dinger equation that is widely investigated, the situation seems to be in a developing state when the new fractional $M$-Laplacian is present. In this context, the natural setting for studying problem \eqref{eq1} are fractional Orlicz-Sobolev spaces.
Currently, as far as we know, the only results for fractional Orlicz-Sobolev spaces and fractional $M$-Laplacian operator are obtained in \cite{Azroul, Sabri,7, B, B1, Napoli, Salort}. In particular, in \cite{7}, Bonder and Salort define the fractional Orlicz-Sobolev space associated to an $N$-function $M$ and a fractional parameter $0<s<1$ as
$$W^{s,M}(\Omega)=\bigg{\{}u\in L^{M}(\Omega):\ \int_{\Omega}\int_{\Omega}M\bigg{(}\frac{u(x)-u(y)}{|x-y|^{s}}\bigg{)}\frac{dxdy}{|x-y|^{d}}<\infty\bigg{\}},$$
where $\Omega$ is an open subset of $\mathbb{R}^{d}$ and $L^{M}(\Omega)$ is the Orlicz space. They also define the fractional $M$-Laplacian operator as,
\begin{equation}\label{newoperator}
(-\triangle)^{s}_{m}u(x)=P.V.\int_{\mathbb{R}^{d}} m\bigg{(}\frac{u(x)-u(y)}{|x-y|^{s}}\bigg{)}\frac{dy}{|x-y|^{d+s}},
\end{equation}
this operator is a generalization of the fractional $p$-Laplacian. % Moreover, %they have shown that fractional Orlicz-Sobolev spaces possess many of the most useful properties of ordinary Orlicz-Sobolev spaces and
 They established compact embedding result referred to under the blanket title "the Fr\`{e}chet-Kolmogorov compactness theorem" witch gives the compact embedding of $W^{s,M}(\Omega)\hookrightarrow L^{M}(\Omega)$ when $\Omega$ is a bounded in $\mathbb{R}^{d}$. They also deduce some consequences such as $\Gamma$-convergence of the modulars and convergence of solutions for some fractional versions of the $(\triangle)^{s}_{m}$ operator as the fractional parameter $s\uparrow1$.\\

Motivated by these above results,
%In this paper
%can be seen as a continuation of the work by Bonder and Salort in \cite{7},
 our first aim is to prove the compact embedding $W^{s,M}(\Omega)\hookrightarrow L^{M_{*}}(\Omega)$ where $M_{*}$ is the Sobolev conjugate of $M$ and $\Omega$ is bounded. Furthermore, we state the continuous embedding $W^{s,M}(\mathbb{R}^{d})\hookrightarrow L^{M_{*}}(\mathbb{R}^{d})$. Hence the compact embedding $W^{s,M}(\Omega)\hookrightarrow L^{\Phi}(\Omega)$ and the continuous embedding $W^{s,M}(\mathbb{R}^{d})\hookrightarrow L^{\Phi}(\mathbb{R}^{d})$ remain true for any $N$-function $\Phi$ such that $M_*$ is essentially stronger than $\Phi$. (see Definition \ref{prec}).\\ %This last embedding remains true in $L^{\Phi}(\mathbb{R}^d)$.\\% for any $N$-function $\Phi$ such that $M_*$ is essentially stronger than $\Phi$.\\%when $\Omega$ is the whole space $\mathbb{R}^{d}$.\\

  Our next aim is to study the existence and the multiplicity of nontrivial weak solutions of problem \eqref{eq1}, where the new fractional $M$-Laplacian is present. Under suitable conditions on the potentials $V$ and $f$ (will be fixed bellow), we deal with a new compact embedding theorem on the whole space $\mathbb{R}^{d}$. Also we establish some useful inequalities  which yields to apply a variant of Fountain theorem due to Zou \cite{Zou}. As far as we know, all these results are new.\\

 Related to functions $m,M,V$ and $f$, our hypotheses are the following:\\

 \textbf{Conditions on} $m$ and $M$:\\

 \begin{enumerate}
  %\item [($m_{1}$)] The function $m(t)t$ is an increasing homeomorphism from $\mathbb{R}$ onto $\mathbb{R}$ and $m(t)>0$ for all $t>0.$
  \item [($m_{1}$)] %There exist $1<l\leq r<N$ such that
  $ 1<m_{0}=\displaystyle \inf_{t>0}\frac{tm(t)}{M(t)}\leq \frac{tm(t)}{M(t)}\leq m^{0}=\displaystyle \sup_{t>0}\frac{tm(t)}{M(t)}<m_{0}^{*}<\infty,\ \text{for all}\ t\neq0$ where $$M(t)=\int_{0}^{|t|}m(s)ds\ \ \text{and}\ \ m_{0}^{*}=\frac{dm_{0}}{d-m_{0}}.$$

\end{enumerate}
%Moreover, $m_{*}(t)t$ is such that the Sobolev conjugate function $M_{*}$ of $M$ is its primitive; that is, $$M_{*}(t)=\int_{0}^{|t|}m_{*}(s)sds.$$
  %$(M_{1})$ \ \ There exists a positive constant $C$ such that
 %  $$C |t|^{\mu}\leq M_{*}(t),\ \forall t\geq 0,\ \forall 1<\mu\leq r,$$
 % where $M_{*}$ is the Sobolev conjugate of $M$.\\

\noindent $(M_{1})$ There exists $1<\mu<m_{0}$, such that
             $$\displaystyle\lim_{|t|\rightarrow+\infty}\displaystyle\frac{|t|^{\mu}}{M(t)}=0.$$

\noindent $(M_{2})$ The function  $t\mapsto M(\sqrt{t}),\ t\in[0,\infty[$ is convex.\\

\noindent $(M_{3})$ $\displaystyle\int_{0}^{1}\frac{M^{-1}(\tau)}{\tau^{\frac{d+s}{d}}}d\tau<\infty$ and
$\displaystyle\int_{1}^{+\infty}\frac{M^{-1}(\tau)}{\tau^{\frac{d+s}{d}}}d\tau=\infty$, where $0<s<1$.\\

 \textbf{Conditions on} $V$:\\

\noindent $(V_{1})$ $V\in C(\mathbb{R}^{d},\mathbb{R})$ and $\inf_{x\in\mathbb{R}^{d}}V(x)\geq V_{0}>0$.\\

\noindent $(V_{2})$ $meas(\{x\in\mathbb{R}^{d}:\ V(x)\leq L\})<\infty,\ \text{for all}\ L>0,$
where $meas(.)$ denotes the Lebesgue measure in $\mathbb{R}^{d}$.\\

\textbf{Conditions on} $f$:\\

 \noindent $(f_{1})$ $f(x,u)=p\xi(x)|u|^{p-2}u$, $1<p<\mu$ and $\xi:\ \mathbb{R}^{d}\rightarrow\mathbb{R}$ is a positive continuous function such that $\xi\in L^{\frac{\mu}{\mu-p}}(\mathbb{R}^{d})$.

\begin{remark}
  We mention some examples of functions $m$ which are increasing homeomorphisms and satisfy conditions $(m_{1})$, $(M_{1})-(M_{2})$:

  \begin{enumerate}
    \item $m(t)= q|t|^{q-2}t$, for all $t\in\mathbb{R}$, with $2<q<d$ (also satisfies condition $(M_{3})$).
    \item $m(t)= p|t|^{p-2}t+ q|t|^{q-2}t$, for all $t\in\mathbb{R}$, with $2<p<q<d$.
    \item $m(t)= q|t|^{q-2}t\log(1+|t|)+\displaystyle\frac{|t|^{q-1}t}{1+|t|}$,  for all $t\in\mathbb{R}$, with $2<q<d$.
  \end{enumerate}
\end{remark}

Under the above hypotheses, we state our main results.

\begin{thm}\label{ceb}
	Let $M$ be an $N$-function and $s\in(0,1)$. Let $\Omega$ be a bounded open subset of $\mathbb{R}^{d}$ with $C^{0,1}$-regularity and bounded boundary.
\begin{enumerate}
  \item If $(m_{1})$ and $(M_{3})$ hold, then the embedding
	\begin{equation}\label{7}
	W^{s,M}(\Omega)\hookrightarrow L^{M_{*}}(\Omega),
	\end{equation}
is continuous.
  %\item
  \item Moreover, for any $N$-function $B$ such that $M_{*}$ is essentially stronger than $B$, denoted $B\prec\prec M_{*}$ (see Definition \ref{prec}), the embedding
   \begin{equation}\label{Bem}
	W^{s,M}(\Omega)\hookrightarrow L^{B}(\Omega),
	\end{equation}
is compact.
\end{enumerate}
\end{thm}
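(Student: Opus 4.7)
The two assertions split naturally: (1) is a modular inequality, while (2) follows from (1) together with the Fr\'echet--Kolmogorov compactness into $L^M$ from Bonder--Salort via a Vitali-type argument.

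\emph{Continuous embedding.} Since $\partial\Omega$ is $C^{0,1}$ and bounded, my first step is to construct a bounded extension operator $E:W^{s,M}(\Omega)\to W^{s,M}(\mathbb{R}^d)$ in the standard way: a finite Lipschitz partition of unity reduces matters to a half-space, where one extends $u$ by reflection and checks that the Gagliardo modular of the extension is controlled by that of $u$ on $\Omega$ plus $\|u\|_{L^M(\Omega)}$; condition $(m_1)$ is what lets one absorb the Jacobian factors of the straightening map up to constants. This reduces the claim to an inequality of the form $\|u\|_{L^{M_*}(\mathbb{R}^d)}\leq C\,[u]_{s,M,\mathbb{R}^d}$. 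Hypothesis $(M_3)$ is precisely what makes the Sobolev conjugate
\[
M_*^{-1}(t)=\int_0^t \frac{M^{-1}(\tau)}{\tau^{(d+s)/d}}\,d\tau
\]
a well-defined $N$-function. To prove the inequality I would adapt the Hedberg--Adams dyadic-potential argument: decompose the level sets $\{|u|>\lambda\}$, bound each one by localising the fractional differences to annuli $\{2^k\leq|x-y|<2^{k+1}\}$, apply a Young-type inequality in Orlicz form, and sum the resulting geometric series using $(M_3)$. An alternative is a capacitary/rearrangement route \`a la Maz'ya--Cianchi starting from a coarea formula for the Gagliardo seminorm.

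\emph{Compact embedding.} Let $(u_n)$ be bounded in $W^{s,M}(\Omega)$. The Bonder--Salort Fr\'echet--Kolmogorov theorem gives, along a subsequence, $u_n\to u$ in $L^M(\Omega)$ and hence a.e.\ on $\Omega$. By part (1), $(u_n)$ is also bounded in $L^{M_*}(\Omega)$. The essential-strength relation $B\prec\prec M_*$ means $B(kt)/M_*(t)\to 0$ as $t\to\infty$ for every $k>0$; so for any $\varepsilon>0$ one can choose $R>0$ with $B(kt)\leq \varepsilon M_*(t)$ whenever $|t|\geq R$. This yields uniform equi-integrability of $\bigl(B(k(u_n-u))\bigr)$, and Vitali's convergence theorem then forces the Luxemburg modular of $u_n-u$ to $0$ in $L^B(\Omega)$, giving the desired compact convergence.

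\emph{Main obstacle.} The delicate point is the pointwise-type Sobolev--Orlicz inequality for general $N$-functions and fractional order. The dyadic bookkeeping has to be carried out using only the two-sided polynomial bounds $m_0\leq tm(t)/M(t)\leq m^0$ from $(m_1)$ in place of an honest exponent, and verifying that $(M_3)$ is the precisely matched hypothesis for the geometric summation to converge (both at $0$ and at $\infty$) is the crux of the argument.
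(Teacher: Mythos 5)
Your proposal takes a genuinely different route from the paper, and the difference is instructive. For the continuous embedding, you propose (i) building a bounded extension operator $W^{s,M}(\Omega)\to W^{s,M}(\mathbb{R}^d)$ and (ii) proving a global inequality $\|u\|_{L^{M_*}(\mathbb{R}^d)}\leq C[u]_{s,M,\mathbb{R}^d}$ by a Hedberg--Adams dyadic potential argument (or a Maz'ya--Cianchi rearrangement argument). Both of these are substantial pieces of machinery, and as you yourself note, the crux --- carrying out the dyadic bookkeeping with only the two-sided bounds from $(m_1)$ and verifying that $(M_3)$ is exactly what makes the geometric sum close --- is left unexecuted, so this portion of your argument is really a plan rather than a proof.

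The paper avoids both the extension operator and the potential-theoretic argument entirely, by a composition trick that reduces to the \emph{classical} fractional Sobolev embedding $W^{s',1}(\Omega)\hookrightarrow L^{d/(d-s')}(\Omega)$ for some $s'<s$. The key observations are: $\omega(t):=[M_*(t)]^{(d-s')/d}$ is a Lipschitz $N$-function (a consequence of $(m_1)$--$(M_3)$), Lipschitz postcomposition preserves $W^{s,M}(\Omega)$, and $W^{s,M}(\Omega)\hookrightarrow W^{s',1}(\Omega)$ continuously. Setting $k=\|u\|_{(M_*)}$ for bounded $u$ and applying the $W^{s',1}\to L^{d/(d-s')}$ embedding to $f=\omega(|u|/k)$ gives $1=\|f\|_{L^{d/(d-s')}}\lesssim \|f\|_{L^1}+[f]_{s',1}$; the two terms on the right are then controlled by $\|u\|_{(M)}/k$ and $[u]_{(s,M)}/k$ using the inequality $[M_*(t)]^{(d-s')/d}\leq\epsilon M_*(t)+K_\epsilon t$ and the Lipschitz bound on $\omega$. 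A Fatou/truncation argument removes the boundedness assumption on $u$. This is substantially shorter and more self-contained than your proposed route, at the cost of being less transparent about \emph{why} $M_*$ is the right Sobolev conjugate.

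For the compactness, your argument and the paper's are close in spirit but differ in the intermediate step. You use Bonder--Salort's Fr\'echet--Kolmogorov theorem to extract a subsequence converging in $L^M(\Omega)$ (hence a.e.) and then prove equi-integrability of $\{B(k(u_n-u))\}$ directly from boundedness in $L^{M_*}$ and $B\prec\prec M_*$, closing with Vitali. The paper instead chains $W^{s,M}(\Omega)\hookrightarrow W^{s',1}(\Omega)\hookrightarrow L^1(\Omega)$ to get precompactness in $L^1$, then invokes Adams' Theorem 8.25 (which encapsulates exactly the Vitali/de la Vall\'ee-Poussin argument you sketch, with $L^1$ in place of $L^M$). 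Both are sound; yours is marginally more self-contained since it avoids the additional citation and doesn't need the $W^{s',1}$ intermediate space at this stage. One small point to tidy: to get \emph{norm} convergence in $L^B$ you need the modular $\int_\Omega B(k(u_n-u))\to 0$ for \emph{every} $k>0$, which your use of $B\prec\prec M_*$ (valid for all $k$) does deliver, but you should say so explicitly rather than appealing to a single $k$.
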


$\,$

The boundedness of $\Omega$ in Theorem \ref{ceb} is a natural requirement for the compactness theorem, but, as we shall show in the next theorem, not necessary for the continuous embedding.

\begin{thm}\label{allR}
Let $M$ be an $N$-function and $s\in(0,1)$.
\begin{enumerate}
  \item If $(m_{1})$ and $(M_{3})$ hold, then the embedding
  $$W^{s,M}(\mathbb{R}^{d})\hookrightarrow L^{M_{*}}(\mathbb{R}^{d}),$$ is continuous.
  \item Moreover, for any $N$-function $B$ such that $B\prec\prec M_{*}$, the embedding
	$$W^{s,M}(\mathbb{R}^{d})\hookrightarrow L^{B}(\mathbb{R}^{d}),$$ is continuous.
\end{enumerate}

\end{thm}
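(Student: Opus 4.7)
\emph{Proof plan.} The plan is to deduce Theorem~\ref{allR} from its bounded-domain counterpart Theorem~\ref{ceb} by a covering argument based on a partition of $\mathbb{R}^{d}$. I tile $\mathbb{R}^{d}$, up to a null set, by the translated unit cubes $Q_{z} := z + (0,1)^{d}$, $z \in \mathbb{Z}^{d}$. Since $Q_{0}$ is Lipschitz with bounded boundary, Theorem~\ref{ceb}(1) supplies a constant $C_{0} > 0$ such that $\|u\|_{L^{M_{*}}(Q_{0})} \leq C_{0}\, \|u\|_{W^{s,M}(Q_{0})}$, and the translation invariance of both norms makes this inequality valid on every cube $Q_{z}$ with the same constant $C_{0}$. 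Moreover, for $u \in W^{s,M}(\mathbb{R}^{d})$, the local $L^{M}$-modular on $Q_{z}$ and the local Gagliardo modular on $Q_{z}\times Q_{z}$ are pieces of the corresponding global modulars, so summing them over $z$ recovers the correct global quantities.

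The critical step is to convert the local norm inequality into an additive modular inequality whose right-hand side sums to the $W^{s,M}(\mathbb{R}^{d})$-modular. Hypothesis $(m_{1})$ forces $M$ and $M_{*}$ to satisfy the $\Delta_{2}$ condition together with the two-sided power bounds $\min(\lambda^{m_{0}}, \lambda^{m^{0}})\, M(t) \leq M(\lambda t) \leq \max(\lambda^{m_{0}}, \lambda^{m^{0}})\, M(t)$ and their $M_{*}$-analogues, which make the Luxemburg norm and the modular quantitatively comparable. Normalising $u$ by $\lambda := \|u\|_{W^{s,M}(\mathbb{R}^{d})}$ renders the global $W^{s,M}$-modular of $u/\lambda$ no larger than $1$, hence the local norms $\|u/\lambda\|_{W^{s,M}(Q_{z})}$ are uniformly $\leq 1$, and the local embedding combined with the norm/modular comparison can be rewritten in additive modular form
\[
\int_{Q_{z}} M_{*}(|u|/\lambda)\, dx \;\leq\; C_{1}\,\Bigl[\, \int_{Q_{z}} M(|u|/\lambda)\, dx \,+\, \iint_{Q_{z}\times Q_{z}} M\!\Bigl(\tfrac{|u(x)-u(y)|}{\lambda\, |x-y|^{s}}\Bigr) \tfrac{dx\, dy}{|x-y|^{d}} \,\Bigr].
\]
Summing over $z \in \mathbb{Z}^{d}$, using that $\{Q_{z}\}$ partitions $\mathbb{R}^{d}$ and $\{Q_{z}\times Q_{z}\}$ is a disjoint subfamily of $\mathbb{R}^{2d}$, yields $\int_{\mathbb{R}^{d}} M_{*}(|u|/\lambda)\, dx \leq 2 C_{1}$. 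A final application of the $\Delta_{2}$ scaling converts this global modular bound into $\|u\|_{L^{M_{*}}(\mathbb{R}^{d})} \leq C\, \|u\|_{W^{s,M}(\mathbb{R}^{d})}$, proving part~(1).

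For part~(2), I would combine part~(1) with the trivial embedding $W^{s,M}(\mathbb{R}^{d}) \hookrightarrow L^{M}(\mathbb{R}^{d})$. The hypothesis $B \prec\prec M_{*}$ provides, for every $\varepsilon > 0$, a constant $C_{\varepsilon} > 0$ such that $B(t) \leq \varepsilon\, M_{*}(t) + C_{\varepsilon}\, M(t)$ for all $t \geq 0$: the large-$t$ regime is controlled directly by the definition of $\prec\prec$, while the small-$t$ regime is handled by comparing the near-linear behaviour of $B$ at the origin with that of $M$ via the $N$-function conventions. Integrating this pointwise splitting and invoking part~(1) gives the desired continuous embedding into $L^{B}(\mathbb{R}^{d})$.

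The main obstacle I foresee is the extraction of the additive local modular inequality with a constant uniform in $z$: Theorem~\ref{ceb} provides only a norm-type bound, and converting it to the additive modular form needed for summation requires delicate use of both the lower and upper power estimates in $(m_{1})$, invoked on both the regime where local quantities are small and the regime where they approach~$1$. Once this bridge is in place, the rest of the covering argument is essentially a bookkeeping exercise, and part~(2) follows from part~(1) without further analytic input.
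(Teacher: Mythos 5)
For part~(1) your plan is essentially the paper's argument: tile $\mathbb{R}^{d}$, apply the bounded-domain embedding locally, normalize by the global norm, convert the local norm bound into an additive modular bound using the power estimates that $(m_{1})$ provides for $M$ and $M_{*}$ (which does indeed require splitting into the cases where the local $M_{*}$-Luxemburg norm is below or above $1$, exactly as in the paper's invocation of Lemma~\ref{lem2} and Lemma~\ref{rhotilde}), and sum. The one genuine difference is cosmetic but actually an improvement: you tile by congruent unit cubes, for which translation invariance immediately gives a $z$-independent constant, whereas the paper tiles by the annuli $B_{i}=\{i\leq|x|<i+1\}$, whose diameter and measure grow with $i$; since the constant produced in the proof of Theorem~\ref{ceb} (through \eqref{sss} and the term $\|\chi_{\Omega}\|_{(\overline{M})}$) depends on those geometric quantities, your choice makes the uniformity transparent where the paper's choice leaves it implicit.

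For part~(2) there is a genuine gap. You deduce the embedding from the pointwise splitting
$B(t)\leq \varepsilon\,M_{*}(t)+C_{\varepsilon}\,M(t)$, claiming the small-$t$ regime is handled ``by comparing the near-linear behaviour of $B$ at the origin with that of $M$.'' But $N$-functions are super-linear at the origin, not near-linear, and there is no universal comparison of two $N$-functions near zero: the relation $B\prec\prec M_{*}$ in Definition~\ref{prec} constrains only the behaviour for large arguments. Concretely, take $M(t)=t^{p}$ with $1<p<d/s$, so $M_{*}(t)\sim t^{p^{*}_{s}}$ with $p^{*}_{s}=dp/(d-sp)>p$, and let $B(t)=t^{q}$ with $1<q<p$. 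Then $B\prec\prec M_{*}$, yet $B(t)/M(t)=t^{q-p}\to\infty$ as $t\to0$, so no inequality $B(t)\leq\varepsilon M_{*}(t)+C_{\varepsilon}M(t)$ can hold near the origin; and indeed $W^{s,p}(\mathbb{R}^{d})$ does not embed continuously into $L^{q}(\mathbb{R}^{d})$ for $q<p$, by scaling $u\mapsto u(\cdot/R)$. So the step you plan to use would fail, and the ``no further analytic input'' assessment is overly optimistic. (You should also be aware that the paper's own one-line justification of part~(2), namely $L^{M_{*}}(\mathbb{R}^{d})\hookrightarrow L^{B}(\mathbb{R}^{d})$ whenever $B\prec\prec M_{*}$, runs into the same obstruction on an unbounded domain; the statement as written needs an additional hypothesis controlling $B$ relative to $M$ near zero.)
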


In studying the existence of solution of problem \eqref{eq1}, it is common to relax the notion of solution by considering weak solutions. By these we understand functions in $W^{s,M}(\Omega)$ that satisfy \eqref{eq1} in sense of distribution.
\begin{thm}\label{thm1}
  Suppose that $(m_{1})$, $(M_{1})-(M_{2})$, $(V_{1})-(V_{2})$ and $(f_{1})$ hold. Then, problem \eqref{eq1} possesses infinitely many nontrivial weak solutions.
\end{thm}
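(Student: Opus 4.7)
The plan is to apply the variant Fountain Theorem of Zou \cite{Zou} to the Euler--Lagrange functional associated with \eqref{eq1}. Because $f(x,u)=p\xi(x)|u|^{p-2}u$ with $1<p<\mu<m_{0}$, the nonlinearity is \emph{sublinear} with respect to the modular growth of $M$, so the functional will be even, coercive and bounded below, placing us squarely in the ``concave'' regime handled by Zou's variant.

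First I would fix the working space. Let
$$E=\left\{u\in W^{s,M}(\mathbb{R}^{d}):\ \int_{\mathbb{R}^{d}} V(x)M(|u|)\,dx<\infty\right\},$$
endowed with the Luxemburg-type norm $\|u\|_{E}=\inf\{\lambda>0:\rho(u/\lambda)\le 1\}$, where $\rho(u)$ is the sum of the Gagliardo $M$-modular and $\int V(x)M(|u|)\,dx$. Using $(V_{1})$--$(V_{2})$, Theorem~\ref{ceb} and Theorem~\ref{allR}, I would prove a compact embedding $E\hookrightarrow L^{q}(\mathbb{R}^{d})$ for every $q\in[\mu,m_{0}^{*})$ by the standard Bartsch--Wang truncation argument: split $\mathbb{R}^{d}$ into $B_{R}$ and its complement, apply the local compact embedding $W^{s,M}(B_{R})\hookrightarrow L^{q}(B_{R})$ on the ball, and use $(V_{2})$ with Hölder's inequality in Orlicz spaces to push the tail to zero as $R\to\infty$.

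Next I would define
$$I(u)=\iint_{\mathbb{R}^{2d}}\frac{M\!\left(\frac{u(x)-u(y)}{|x-y|^{s}}\right)}{|x-y|^{d}}\,dx\,dy+\int_{\mathbb{R}^{d}}V(x)M(|u|)\,dx-\int_{\mathbb{R}^{d}}\xi(x)|u|^{p}\,dx,$$
verify that $I\in C^{1}(E,\mathbb{R})$ with critical points corresponding to weak solutions of \eqref{eq1}, and note that $I$ is even. Using $(M_{1})$ to get $M(t)\ge c|t|^{\mu}$ for $|t|$ large, together with Hölder's inequality and $\xi\in L^{\mu/(\mu-p)}(\mathbb{R}^{d})$, one obtains
$$\int_{\mathbb{R}^{d}}\xi(x)|u|^{p}\,dx\le \|\xi\|_{\mu/(\mu-p)}\,\|u\|_{L^{\mu}}^{p},$$
which, combined with the equivalence between the modular and the norm when $\|u\|_{E}$ is large, yields coercivity and hence $I$ is bounded below. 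The Cerami condition then follows: a $(C)$-sequence is bounded by coercivity, and the compact embedding of Step~1 upgrades weak to strong convergence after the standard modular convexity argument ($(M_{2})$ makes $M(\sqrt{\cdot})$ convex, which is exactly what is needed to pass to the limit in the nonlocal $M$-term).

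Finally I would verify Zou's two geometric hypotheses with respect to a decomposition $E=\overline{\bigoplus_{k}X_{k}}$ into one-dimensional subspaces, setting $Y_{k}=\bigoplus_{j\le k}X_{j}$ and $Z_{k}=\overline{\bigoplus_{j\ge k}X_{j}}$. On the finite-dimensional $Y_{k}$ all norms are equivalent, so since $p<m_{0}$ the sublinear term $\int\xi|u|^{p}$ dominates the modular terms for \emph{small} $\|u\|_{E}$; choosing $\rho_{k}>0$ small enough yields $\max_{u\in Y_{k},\,\|u\|=\rho_{k}}I(u)\le a_{k}<0$ with $a_{k}\to -\infty$. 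For the other condition, define $\beta_{k}=\sup\{\|u\|_{L^{p}(\xi\,dx)}:u\in Z_{k},\,\|u\|_{E}=1\}$; a routine argument using the compact embedding $E\hookrightarrow L^{\mu}(\mathbb{R}^{d})$ gives $\beta_{k}\to 0$, whence $b_{k}:=\inf_{u\in Z_{k},\,\|u\|\le r_{k}}I(u)\to 0$ for a suitable choice $r_{k}\to 0$. Zou's theorem then delivers a sequence of critical points $u_{k}$ with $I(u_{k})\to 0^{-}$, hence nontrivial, and clearly infinitely many. The main obstacle in the whole proof is the compact embedding $E\hookrightarrow L^{\mu}(\mathbb{R}^{d})$: the absence of homogeneity of $M$ forces one to work simultaneously with the Luxemburg norm and the modular, and to carefully track how $(V_{2})$ interacts with the Orlicz-type Hölder inequality on the complement of large balls.
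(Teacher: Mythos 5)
Your proposal follows essentially the same route as the paper: the working space $E$ built from $W^{s,M}(\mathbb{R}^d)$ and the potential modular, a compact embedding into $L^{\mu}(\mathbb{R}^d)$ driven by $(V_2)$, Zou's variant Fountain theorem for the $\lambda$-parametrized family $I_\lambda=A-\lambda B$, boundedness and strong convergence of the approximate critical sequences via coercivity and the uniform-convexity consequence of $(M_2)$. The overall architecture is correct.

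Two points worth flagging. First, your argument on the finite-dimensional pieces $Y_k$ is cleaner than the paper's: you simply observe that, since $\xi$ is positive and continuous, $u\mapsto\big(\int_{\mathbb{R}^d}\xi|u|^p\,dx\big)^{1/p}$ is a norm on $Y_k$, hence equivalent to $\|\cdot\|$, which immediately gives $\int\xi|u|^p\ge c_k\|u\|^p$ there; the paper instead proves the measure-theoretic estimate $\meas\{\xi|u|^p\ge c_H\|u\|^p\}\ge c_H$ by a two-level contradiction argument. Both give what is needed (condition (ii) of Zou's theorem and the negativity of $b_k(\lambda)$ on small spheres in $Y_k$), but your equivalence-of-norms shortcut is more direct and avoids the somewhat delicate claim about the sets $\Lambda_u$. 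Second, a small slip: you write that $\max_{u\in Y_k,\,\|u\|=\rho_k}I(u)\le a_k<0$ with $a_k\to-\infty$; but since those radii must shrink to $0$ to match the $Z_k$-estimate, one actually gets the maxima tending to $0^-$, not $-\infty$. Fortunately Zou's hypothesis only requires strict negativity of these maxima together with $d_k(\lambda)\to 0$, so the incorrect ``$\to -\infty$'' is not used. Similarly, you claim compactness of $E\hookrightarrow L^q(\mathbb{R}^d)$ for all $q\in[\mu,m_0^*)$; the paper's Lemma~\ref{lem1} only yields it for $N$-functions $\Phi$ with upper index $\varphi^0<m_0$ (i.e.\ $q<m_0$), which is all you actually need since $\mu<m_0$. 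The wider range would require a separate interpolation argument that you don't supply, so it is safer to state only the $L^\mu$ compactness.
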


This paper is organized as follows. In Section $2$, we give some
definitions and fundamental properties of the spaces
$L^{M}(\Omega)$ and $W^{s,M}(\Omega)$. In Section $3$,  we prove Theorems \ref{ceb} and \ref{allR}. In Section $4$, we introduce our abstract framework related to problem \eqref{eq1}. Finally, in Section $5$, using a variant Fountain theorem \cite{Zou}, we prove Theorem \ref{thm1}.

\section{Preliminaries}

We start by recalling some basic facts about Orlicz spaces $L^{M}(\Omega)$. For more details we refer to the books by Adams \cite{Adams}, Kufner {\it et al.} \cite{Kufner}, Rao and Ren \cite{Rao} and the papers by Cl\'{e}ment {\it et al.} \cite{clement1, clement}, Fukagai {\it et al.} \cite{Fukagai}, Garc\'{\i}a-Huidobro {\it et al.} \cite{Garcia} and Gossez \cite{Gossez}.

\subsection{Orlicz spaces}

%\textbf{Notation:}\\

\noindent $\bullet$ $B^{c}_{R}(0)=\mathbb{R}^{d}\setminus B_{R}(0)$.\\
%$\bullet$ $P.V$ is the principal value, \\
$\bullet$ $\|u\|_{L^{\mu}(\mathbb{R}^{d})}=\bigg{(}\displaystyle\int_{\mathbb{R}^{d}}|u(x)|^{\mu}dx\bigg{)}^{\frac{1}{\mu}}$.\\

 Let $M: \mathbb{R}\rightarrow\mathbb{R}_{+}$ be an $N$-function, i.e,

\begin{enumerate}
	\item $M$ is even, continuous, convex,
	\item $\frac{M(t)}{t}\rightarrow 0$ as $t\rightarrow0$ and  $\frac{M(t)}{t}\rightarrow +\infty$ as $t\rightarrow+\infty$.
\end{enumerate}

Equivalently, $M$ admits the representation: $$M(t)=\int_{0}^{|t|}m(s)ds,$$ where $m: \mathbb{R}\rightarrow\mathbb{R}$ is non-decreasing, right continuous, with $m(0)=0$, $m(t)>0\ \text{for all}\ t>0$ and $m(t)\rightarrow\infty$ as $t\rightarrow\infty$ (see \cite{Krasnoselskii}, page 9). We call the conjugate function of $M$, the function denoted $\overline{M}$ and defined by
$$\overline{M}(t)=\int_{0}^{|t|}\overline{m}(s)ds,$$ where  $\overline{m}: \mathbb{R}\rightarrow\mathbb{R}$,
$\overline{m}(t)=\sup\{s:\ m(s)\leq t\}$.
We observe that $\overline{M}$ is also an $N$-function and the following Young's inequality holds true

\begin{equation}\label{Young}
st\leq M(s)+\overline{M}(t)\ \ \text{for all}\ s,t\geq0,\ \ (\text{see}\ \cite{Adams},\ \text{page}\ 229).
\end{equation}
Equality holds in \eqref{Young} if and only if either $t=m(s)$ or $s=\overline{m}(t)$.\\

If $(M_{3})$ is satisfied, another important function related to function $M$, it is the Sobolev conjugate $N$-function $M_{*}$ of $M$ defined by,

\begin{equation}\label{6}
M_{*}^{-1}(t)=\int_{0}^{t}\frac{M^{-1}(\tau)}{\tau^{\frac{d+s}{d}}}d\tau.
\end{equation}
% \textcolor{red}{We assume that $M_{*}$ satisfies the $\triangle_{2}$-condition, which means for some $1<l^{*}\leq r^{*}$ we have}

In what follows, we say that an $N$-function $M$ satisfies the $\triangle_{2}$-condition, if
\begin{equation}\label{delta2}
  M(2t)\leq K\ M(t)\ \text{for all}\ t\geq0,%\ \  (\text{see}\ \cite{Kufner},\ \text{Definition}\ 4.4.1),
\end{equation}
for some constant $K>0$. This condition can be rewritten in the following way\\ For each $s>0$, there exists $K_{s}>0$ such that
\begin{equation}\label{deltas}
  M(st)\leq K_{s}\ M(t),\ \text{for all}\ t\geq0,\ \ (\text{see}\ \cite{Krasnoselskii},\ \text{page}\ 23).
\end{equation}

\begin{definition}\label{prec}
  Let $A$ and $B$ be two $N$-functions, we say that $A$ is essentially stronger than $B$, $B\prec\prec A$ in symbols, if for each $a>0$ there exists $x_{a}\geq0$ such that $$B(x)\leq A(ax),\ x\geq x_{a}.$$
\end{definition}

The  previous definition \ref{prec} is equivalent to, $$\lim_{t\rightarrow+\infty}\frac{B(kt)}{A(t)}=0,\ \text{for all positive constant}\ k \  (\text{see}\ \cite{Rao},\ \text{Theorem}\ 2).$$
Let $\Omega$ be an open subset of $\mathbb{R}^{d}$ and $\rho(u,M)=\displaystyle\int_{\Omega}M(u(x))dx$. The Orlicz space $L^{M}(\Omega)$ is the set of equivalence classes of real-valued measurable functions $u$ on $\Omega$  such that $\rho(\lambda u,M)<\infty\ \text{for some}\ \lambda>0.$\\

$L^{M}(\Omega)$ is a Banach space under the Luxemburg norm
\begin{equation}\label{19}
\|u\|_{(M,\Omega)}=\inf\bigg{\{}\lambda>0\ :\ \int_{\Omega}M(\frac{u}{\lambda})dx\leq1\bigg{\}},
\end{equation}
 if there is no confusion we shall write $\|.\|_{(M)}$ instead of $\|.\|_{(M,\Omega)}$, whose norm is equivalent to the Orlicz norm $$\|u\|_{L^{M}(\Omega)}=\sup_{\rho(v,\overline{M})\leq1}\int_{\Omega}|u(x)||v(x)|dx,$$ and for each $u\in L^{M}(\Omega)$,
 \begin{equation}\label{tman}
   \|u\|_{(M)}\leq\|u\|_{ L^{M}(\Omega)}\leq 2 \|u\|_{(M)}\ \ (\text{see}\cite{Kufner},\ \text{Theorem}\ 4.8.5).
 \end{equation}

 The $\triangle_{2}$-condition with $(M_{2})$ ensures that the Orlicz space $L^{M}(\Omega)$ is a uniformly convex space and thus, a reflexive Banach space (see \cite{Mihai}, Proposition $2.2$).\\

The Orlicz spaces H\"{o}lder's inequality reads as follows:  (see \cite{Kufner}, Theorem 4.7.5)
$$\int_{\Omega}|uv|dx\leq \|u\|_{L^{M}(\Omega)}\|v\|_{L^{\overline{M}}(\Omega)}\ \ \text{for all}\ \ u\in L^{M}(\Omega)\ \text{and}\ v\in L^{\overline{M}}(\Omega).$$

In the following, we recall a few results which will be useful in the sequel.

\begin{prp}[\cite{Adams}]\label{cv}
	Let $(u_{n})_{n\in\mathbb{N}}$ be a sequence in $L^{M}(\Omega)$ and $u\in L^{M}(\Omega)$.
	If $M$ satisfies the $\triangle_{2}$-condition and $\rho(u_{n}-u,M)\rightarrow0$, then $u_{n}\rightarrow u$ in $ L^{M}(\Omega)$ .
\end{prp}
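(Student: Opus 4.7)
The plan is to upgrade the convergence of the modular $\rho(u_n-u,M)\to 0$ to convergence in the Luxemburg norm $\|u_n-u\|_{(M)}\to 0$, using the $\triangle_2$-condition in the rescaled form \eqref{deltas}. By definition of the Luxemburg norm, to conclude $\|u_n-u\|_{(M)}\to 0$ it suffices to show that for every fixed $\e>0$ one has $\int_{\Omega}M\bigl(\tfrac{u_n-u}{\e}\bigr)\,dx\le 1$ for all sufficiently large $n$; in fact I will prove the stronger statement that this integral tends to $0$.

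First, fix an arbitrary $\e>0$ and set $s=1/\e$. The $\triangle_2$-condition in the form \eqref{deltas} supplies a constant $K_s>0$ such that $M(st)\le K_s M(t)$ for all $t\ge 0$. Applied pointwise to $t=|u_n(x)-u(x)|$ and using that $M$ is even, this yields
\[
M\!\left(\frac{u_n(x)-u(x)}{\e}\right)=M\bigl(s\,|u_n(x)-u(x)|\bigr)\le K_s\,M\bigl(u_n(x)-u(x)\bigr)
\]
for a.e.\ $x\in\Omega$. Integrating over $\Omega$ gives
\[
\int_{\Omega}M\!\left(\frac{u_n-u}{\e}\right)dx\le K_s\,\rho(u_n-u,M).
\]

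Since by hypothesis $\rho(u_n-u,M)\to 0$, the right-hand side tends to $0$ as $n\to\infty$. In particular, there exists $n_0=n_0(\e)$ such that $\int_{\Omega}M\bigl(\tfrac{u_n-u}{\e}\bigr)\,dx\le 1$ for all $n\ge n_0$, which by the definition \eqref{19} of the Luxemburg norm gives $\|u_n-u\|_{(M)}\le \e$. As $\e>0$ was arbitrary, we conclude $\|u_n-u\|_{(M)}\to 0$, i.e.\ $u_n\to u$ in $L^M(\Omega)$.

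There is essentially no obstacle here: the whole argument is a one-line application of the scale-invariance provided by the $\triangle_2$-condition, so the only care needed is to rely on the multiplicative reformulation \eqref{deltas} rather than the plain $M(2t)\le KM(t)$ version, since the scaling factor $1/\e$ may be arbitrarily large.
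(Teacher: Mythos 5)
Your proof is correct and is the standard argument for this classical fact; the paper itself does not supply a proof but simply cites Adams, so there is no internal proof to compare against. The key point you handle properly is that the naive form $M(2t)\le K M(t)$ only controls dilations by powers of $2$, whereas to bound $M\bigl((u_n-u)/\e\bigr)$ for an arbitrary $\e>0$ you need the equivalent rescaled form \eqref{deltas}; and that form is indeed legitimate for all $s>0$ (iterate \eqref{delta2} when $s\ge1$, and use convexity with $M(0)=0$ when $0<s<1$). One tiny stylistic remark: you did not strictly need the even symmetry of $M$, since $N$-functions are already taken as functions of $|t|$ throughout the paper, but invoking it does no harm.
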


\begin{prp}[\cite{Kufner}]\label{KML}
Let $M$ be an $N$-function. Then $$\|u\|_{L^{M}(\Omega)}\leq \rho(u,M)+1,\ \text{for all}\ u\in L^{M}(\Omega).$$
\end{prp}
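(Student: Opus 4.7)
The plan is to prove Proposition \ref{KML} by a direct application of Young's inequality \eqref{Young}, which has already been recorded in the preliminaries. Since $\|u\|_{L^{M}(\Omega)}$ is defined as the Orlicz norm via a supremum over test functions $v$ satisfying $\rho(v,\overline{M})\leq 1$, one only needs to bound $\int_{\Omega}|u(x)||v(x)|\,dx$ uniformly in such $v$ by $\rho(u,M)+1$.

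First, I would fix an arbitrary $v\in L^{\overline{M}}(\Omega)$ with $\rho(v,\overline{M})\leq 1$. Applying Young's inequality pointwise with $s=|u(x)|$ and $t=|v(x)|$ gives
\[
|u(x)||v(x)|\ \leq\ M(|u(x)|)+\overline{M}(|v(x)|)\qquad\text{for a.e. }x\in\Omega.
\]
Integrating this inequality over $\Omega$ and using the definition of the modular $\rho$ yields
\[
\int_{\Omega}|u(x)||v(x)|\,dx\ \leq\ \rho(u,M)+\rho(v,\overline{M})\ \leq\ \rho(u,M)+1,
\]
where in the last step I use the constraint $\rho(v,\overline{M})\leq 1$.

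Finally, since the right-hand side is independent of the particular $v$ chosen, I would take the supremum over all admissible $v$ to conclude
\[
\|u\|_{L^{M}(\Omega)}\ =\ \sup_{\rho(v,\overline{M})\leq 1}\int_{\Omega}|u(x)||v(x)|\,dx\ \leq\ \rho(u,M)+1.
\]
There is no real obstacle here: the entire proof is a one-line consequence of Young's inequality combined with the variational definition of the Orlicz norm. The only minor point worth mentioning is that the bound makes sense for every $u\in L^{M}(\Omega)$ because $\rho(u,M)<\infty$ is guaranteed by membership in $L^{M}(\Omega)$ (under the $\triangle_{2}$-condition, which is implicit in the setting coming from $(m_{1})$).
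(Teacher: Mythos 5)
Your proof is correct: it is the standard argument, applying Young's inequality pointwise, integrating, and taking the supremum defining the Orlicz norm. The paper itself only cites this result from \cite{Kufner} without reproducing the proof, and your derivation matches the classical one; note only that the $\triangle_{2}$-condition in your closing remark is not actually needed for the inequality to hold (if $\rho(u,M)=\infty$ the bound is vacuous), so the proposition is valid for an arbitrary $N$-function exactly as stated.
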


 \begin{lemma}[\cite{clement}]\label{lem2}
   Let $G$ be an $N$-function satisfying $$1<g_{0}:=\inf_{t>0}\frac{t g(t)}{G(t)}\leq \frac{t g(t)}{G(t)}\leq g^{0}:=\sup_{t>0}\frac{t g(t)}{G(t)}<\infty$$
   where $g=G'$ and let $\xi_{0}(t)=\min\{t^{g_{0}},t^{g^{0}}\}$, $\xi_{1}(t)=\max\{t^{g_{0}},t^{g^{0}}\}$, for all $t\geq0$. Then
   $$\xi_{0}(\beta)G(t)\leq G(\beta t)\leq \xi_{1}(\beta)G(t)\ \text{for}\ \beta,t\geq0,$$ and $$\xi_{0}(\|u\|_{(G,\Omega)})\leq \int_{\Omega}G(|u|)dx\leq\xi_{1}(\|u\|_{(G,\Omega)})\ \text{for}\ u\in L^{G}(\Omega).$$
 \end{lemma}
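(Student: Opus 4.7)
The plan is to reduce the integral bounds to a pointwise bound on $G$, then insert the Luxemburg normalization. Concretely, I would work in two stages.

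\textbf{Stage 1: pointwise inequality.} Fix $t>0$ and consider the function $\psi(\b)=G(\b t)/G(t)$ for $\b>0$. Since $G$ is absolutely continuous with $G'=g$, I get
\[
\frac{\b\,\psi'(\b)}{\psi(\b)} \;=\; \frac{\b t\, g(\b t)}{G(\b t)},
\]
and by the hypothesis this quantity lies in $[g_0,g^0]$. Hence
\[
\frac{g_0}{\b} \;\le\; \frac{\psi'(\b)}{\psi(\b)} \;\le\; \frac{g^0}{\b}.
\]
Integrating from $1$ to $\b$ (when $\b\ge 1$) and from $\b$ to $1$ (when $\b\le 1$), using $\psi(1)=1$, gives
$\b^{g_0}\le \psi(\b)\le \b^{g^0}$ in the first case and the reverse in the second. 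In either case the min/max packaging yields
\[
\xi_{0}(\b)G(t)\;\le\; G(\b t)\;\le\; \xi_{1}(\b)G(t)\qquad(\b,t\ge 0).
\]

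\textbf{Stage 2: from pointwise to modular.} Let $u\in L^{G}(\Omega)$ with $\b:=\|u\|_{(G,\Omega)}>0$ (the $u\equiv 0$ case is trivial). Applying the pointwise bound to the pair $(\b,\,|u(x)|/\b)$ and integrating in $x$ yields
\[
\xi_{0}(\b)\!\int_{\Omega}\! G\!\left(\tfrac{|u(x)|}{\b}\right)dx
\;\le\; \int_{\Omega} G(|u|)\,dx \;\le\;
\xi_{1}(\b)\!\int_{\Omega}\! G\!\left(\tfrac{|u(x)|}{\b}\right)dx .
\]
The last ingredient is the identity $\int_{\Omega}G(|u|/\b)\,dx = 1$, which follows from the Luxemburg definition together with the $\Delta_{2}$-condition on $G$ (this condition is guaranteed by $g^{0}<\infty$, via the representation of $\Delta_{2}$ in display (\ref{deltas}), so the infimum defining $\|u\|_{(G,\Omega)}$ is attained). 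Substituting this identity finishes the proof.

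\textbf{Expected obstacle.} The only non-mechanical point is the normalization $\int_{\Omega}G(|u|/\|u\|_{(G,\Omega)})\,dx = 1$; if one only assumed $G$ to be an $N$-function one would get $\le 1$ and could lose equality for modulars that saturate at $1$. The bound $g^{0}<\infty$ precisely rules this out by forcing $\Delta_2$ on both $G$ and its complementary function, which also secures the continuity of the modular needed to attain the infimum. Once this standard fact is invoked, both Stage 1 and Stage 2 are short and direct.
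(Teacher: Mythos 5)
Lemma \ref{lem2} is quoted from \cite{clement}; the paper does not reproduce a proof, so there is no in-paper argument to compare against. Your proof is correct and is the standard one: the logarithmic-derivative estimate on $\psi(\beta)=G(\beta t)/G(t)$ integrates to the pointwise bounds $\xi_0(\beta)G(t)\le G(\beta t)\le\xi_1(\beta)G(t)$, and the $\Delta_2$-condition forced by $g^0<\infty$ gives finiteness and continuity of the modular $\lambda\mapsto\int_\Omega G(|u|/\lambda)\,dx$, hence the normalization $\int_\Omega G(|u|/\|u\|_{(G,\Omega)})\,dx=1$ for $u\neq 0$, which together with the pointwise bound applied to $(\beta,|u(x)|/\beta)$ yields the modular inequalities.

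One small correction in the ``expected obstacle'' paragraph: $g^0<\infty$ alone forces $\Delta_2$ for $G$ but not for the complementary function $\overline G$; it is the lower bound $g_0>1$ that yields $\Delta_2$ for $\overline G$. Since only $\Delta_2$ for $G$ itself is used to secure $\int_\Omega G(|u|/\|u\|_{(G,\Omega)})\,dx=1$, this slip is harmless, but the attribution should be fixed.
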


 \begin{lemma}[\cite{Fukagai}]\label{M***}
  Let $M$ be an $N$-function satisfying $(m_{1})$ and $(M_{3})$, then the function $M_{*}$ satisfies the following inequality
   $$m_{0}^{*}=\frac{dm_{0}}{d-m_{0}}\leq \frac{t m_{*}(t)}{M_{*}(t)}\leq (m^{0})^{*}=\frac{dm^{0}}{d-m^{0}},$$ where $m_{*}$ is such that $M_{*}(t)=\int_{0}^{|t|}m_{*}(s)ds.$
 \end{lemma}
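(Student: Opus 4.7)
The plan is to reduce the claim to an estimate on the logarithmic derivative of $M_*^{-1}$, which is available explicitly from \eqref{6}. Since $M_*$ is a strictly monotone $N$-function, the inverse function theorem and the chain rule yield the identity
$$
\frac{u\, m_*(u)}{M_*(u)} = \left( \frac{\tau\,(M_*^{-1})'(\tau)}{M_*^{-1}(\tau)} \right)^{-1}\bigg|_{\tau = M_*(u)},
$$
so it suffices to sandwich $\tau\,(M_*^{-1})'(\tau)/M_*^{-1}(\tau)$ between $1/(m^0)^*$ and $1/m_0^*$. Differentiating \eqref{6} gives $(M_*^{-1})'(\tau) = M^{-1}(\tau)/\tau^{(d+s)/d}$, and after the rescaling $\eta = \theta\tau$ inside the defining integral this ratio becomes
$$
\frac{\tau\,(M_*^{-1})'(\tau)}{M_*^{-1}(\tau)} = \frac{M^{-1}(\tau)}{\displaystyle\int_0^1 \theta^{-(d+s)/d}\, M^{-1}(\theta\tau)\,d\theta}.
$$

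Next I would propagate the power bounds from $(m_1)$ through this integral. Applying Lemma \ref{lem2} to $M$ and inverting, for $0<\theta\le 1$ one obtains $\theta^{1/m_0} M^{-1}(\tau) \le M^{-1}(\theta\tau) \le \theta^{1/m^0} M^{-1}(\tau)$. Substituting these pointwise bounds into the denominator, the factor $M^{-1}(\tau)$ cancels and I am left with the two elementary integrals $\int_0^1 \theta^{1/m_0 - (d+s)/d}\,d\theta$ and $\int_0^1 \theta^{1/m^0 - (d+s)/d}\,d\theta$, whose values are precisely the Sobolev conjugate exponents $m_0^*$ and $(m^0)^*$ in the statement. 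Taking reciprocals then delivers the desired two-sided bound on $u\, m_*(u)/M_*(u)$.

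The main obstacle is the integrability of $\theta^{1/m_0 - (d+s)/d}$ at $\theta = 0$: this requires $m_0$ to lie strictly below the critical Sobolev exponent, and is exactly the role of the last inequality $m^0 < m_0^*$ encoded in $(m_1)$. A secondary book-keeping hurdle is keeping track of the direction of the inequalities, since the argument inverts the roles of the upper and lower bounds twice, once in passing from bounds on $M$ to bounds on $M^{-1}$ and once more when taking reciprocals at the end; both swaps must be carried out carefully so that the final orientation lines up with $m_0^* \le u\, m_*(u)/M_*(u) \le (m^0)^*$.
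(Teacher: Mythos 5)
Your method is exactly the standard route to such two-sided estimates, and the preliminary steps are all sound: the identity relating $u\,m_*(u)/M_*(u)$ to the logarithmic derivative of $M_*^{-1}$, the rescaling $\eta=\theta\tau$ inside the integral \eqref{6}, and the transfer of the power bounds from $M$ to $M^{-1}$ (giving $\theta^{1/m_0}M^{-1}(\tau)\le M^{-1}(\theta\tau)\le \theta^{1/m^0}M^{-1}(\tau)$ for $0<\theta\le 1$ via Lemma \ref{lem2}) are all correct. The paper itself cites \cite{Fukagai} and gives no internal proof, so there is nothing to compare against; yours is the natural argument. Your remark on integrability is also fine: $(M_3)$ already forces $m^0<d/s$, so both integrals converge.

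The last sentence of your second paragraph is, however, not correct, and actually computing the integrals you leave to the reader reveals a real discrepancy. With the fractional exponent $(d+s)/d$ in \eqref{6} one gets
$$\int_0^1\theta^{1/m_0-(d+s)/d}\,d\theta=\frac{1}{1/m_0-s/d}=\frac{dm_0}{d-sm_0},\qquad
\int_0^1\theta^{1/m^0-(d+s)/d}\,d\theta=\frac{dm^0}{d-sm^0},$$
which coincide with the paper's stated $m_0^*=dm_0/(d-m_0)$ and $(m^0)^*=dm^0/(d-m^0)$ only when $s=1$. Since $dm_0/(d-sm_0)<dm_0/(d-m_0)$ for $s<1$, what your method actually proves is
$$\frac{dm_0}{d-sm_0}\le\frac{t\,m_*(t)}{M_*(t)}\le\frac{dm^0}{d-sm^0},$$
and the lower bound claimed in the lemma is strictly stronger than this. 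It is in fact false as stated: for $M(t)=|t|^p$ with $1<p<d$, equation \eqref{6} yields $M_*(t)$ proportional to $|t|^q$ with $q=dp/(d-sp)$, so $t\,m_*(t)/M_*(t)\equiv q<dp/(d-p)=m_0^*$, violating the stated lower bound for every $s<1$. The lemma statement appears to have been carried over verbatim from the classical $s=1$ result in \cite{Fukagai} without adjusting the constants to the fractional conjugate \eqref{6}; a careful execution of your own computation would have caught this.
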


\subsection{Fractional Orlicz-Sobolev spaces}

In this subsection we give a brief overview on the fractional Orlicz-Sobolev spaces studied in \cite{7}, and the associated fractional $M$-laplacian operator.

\begin{definition}
	Let $M$ be an $N$-function. For an open subset $\Omega$ in $\mathbb{R}^{d}$ and $0<s<1$, we define the fractional Orlicz-Sobolev space $W^{s,M}(\Omega)$ as follows,
	\begin{equation}\label{20}
	W^{s,M}(\Omega)=\bigg{\{}u\in
L^{M}(\Omega):\ \int_{\Omega}\int_{\Omega}M\bigg{(}\frac{u(x)-u(y)}{|x-y|^{s}}\bigg{)}\frac{dxdy}{|x-y|^{d}}<\infty\bigg{\}}.
	\end{equation}
This space is equipped with the norm,
	\begin{equation}\label{21}
	\|u\|_{(s,M,\Omega)}=\|u\|_{(M,\Omega)}+[u]_{(s,M,\Omega)},
	\end{equation}
	where $[.]_{(s,M,\Omega)}$ is the Gagliardo semi-norm, defined by
	\begin{equation}\label{22}
	[u]_{(s,M,\Omega)}=\inf\bigg{\{}\lambda>0:\ \int_{\Omega}\int_{\Omega} M\bigg{(}\frac{u(x)-u(y)}{\lambda|x-y|^{s}}\bigg{)}\frac{dxdy}{|x-y|^{d}}\leq1\bigg{\}},
	\end{equation}
if there is no confusion we shall write $[.]_{(s,M)}$ and $\|.\|_{(s,M)}$ instead of $[.]_{(s,M,\Omega)}$ and $\|.\|_{(s,M,\Omega)}$ respectively.
\end{definition}

\begin{prp}[\cite{7}]
  Let $M$ be an $N$-function such that $M$ and $\overline{M}$ satisfy the $\triangle_{2}$-condition, and consider $s\in(0,1)$. Then $W^{s,M}(\mathbb{R}^{d})$ is a reflexive and separable Banach space. Moreover, $C^{\infty}_{0}(\mathbb{R}^{d})$ is dense in $W^{s,M}(\mathbb{R}^{d})$.
\end{prp}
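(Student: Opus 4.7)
The plan is to realize $W^{s,M}(\mathbb{R}^{d})$ as an isometric closed subspace of a product of Orlicz spaces so that reflexivity and separability are inherited from the ambient space, and then to obtain density of $C_{0}^{\infty}(\mathbb{R}^{d})$ through a truncate-then-mollify procedure adapted to the Gagliardo modular.

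First, I would introduce the measure $d\mu(x,y)=|x-y|^{-d}\,dx\,dy$ on $\mathbb{R}^{2d}$ and the product space $Y=L^{M}(\mathbb{R}^{d})\times L^{M}(\mathbb{R}^{2d},d\mu)$ with norm $\|(u,v)\|_{Y}=\|u\|_{(M)}+\|v\|_{(M,d\mu)}$. Because $M$ and $\overline{M}$ both satisfy the $\triangle_{2}$-condition, each factor of $Y$ is a reflexive, separable (indeed uniformly convex) Orlicz space by the standard Orlicz theory, and so is $Y$. The map $Tu=(u,D_{s}u)$ with $D_{s}u(x,y)=(u(x)-u(y))/|x-y|^{s}$ is, by the very definition of the norm \eqref{21}--\eqref{22}, an isometry of $W^{s,M}(\mathbb{R}^{d})$ into $Y$. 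To transfer reflexivity and separability, I would verify that $T(W^{s,M})$ is closed in $Y$: if $Tu_{n}=(u_{n},D_{s}u_{n})\to(v,w)$ in $Y$, then $u_{n}\to v$ in $L^{M}$ and $D_{s}u_{n}\to w$ in $L^{M}(d\mu)$, and extracting a pointwise a.e.\ convergent subsequence forces $w(x,y)=D_{s}v(x,y)$ a.e., so $(v,w)=Tv$. Completeness (hence the Banach property), reflexivity, and separability of $W^{s,M}(\mathbb{R}^{d})$ all follow at once.

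For the density of $C_{0}^{\infty}(\mathbb{R}^{d})$, I would proceed in two steps. \emph{Truncation}: choose $\eta\in C_{0}^{\infty}(\mathbb{R}^{d})$ with $\eta\equiv 1$ on $B_{1}$, $\mathrm{supp}\,\eta\subset B_{2}$, $|\nabla\eta|\le C$, and set $\eta_{R}(x)=\eta(x/R)$. The decomposition
\[
(u-u\eta_{R})(x)-(u-u\eta_{R})(y)=\bigl(u(x)-u(y)\bigr)\bigl(1-\eta_{R}(x)\bigr)+u(y)\bigl(\eta_{R}(y)-\eta_{R}(x)\bigr),
\]
together with the $\triangle_{2}$-condition (to absorb the constant $2$ inside $M$) reduces $[u-u\eta_{R}]_{(s,M)}$ to two contributions: the first tends to zero by dominated convergence on $\mathbb{R}^{2d}$ because $1-\eta_{R}\to 0$ pointwise, and the second is controlled, after splitting the domain into $\{|x-y|\le 1\}$ and $\{|x-y|>1\}$, using the Lipschitz bound $|\eta_{R}(x)-\eta_{R}(y)|\le CR^{-1}|x-y|$ (where $s<1$ makes the resulting singularity locally integrable) and the tail integrability of $u\in L^{M}$. \emph{Mollification}: given the now compactly supported $v=u\eta_{R}\in W^{s,M}$, convolve with a standard mollifier $\rho_{\varepsilon}$. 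Continuity of translation in $L^{M}$, valid thanks to $\triangle_{2}$, gives $v\ast\rho_{\varepsilon}\to v$ in $L^{M}$, while the identity $D_{s}(v\ast\rho_{\varepsilon})(x,y)=\int_{\mathbb{R}^{d}}D_{s}v(x-z,y-z)\rho_{\varepsilon}(z)\,dz$ combined with Jensen's inequality and dominated convergence in $L^{M}(d\mu)$ yields $[v\ast\rho_{\varepsilon}-v]_{(s,M)}\to 0$. A diagonal extraction produces the desired $C_{0}^{\infty}$ approximant of $u$.

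The main obstacle is the truncation step. The nonlocality of the Gagliardo modular means that localising by $\eta_{R}$ still couples the far-field of $u$ to the Lipschitz defect of $\eta_{R}$ through the cross-term $u(y)(\eta_{R}(y)-\eta_{R}(x))$; making this term vanish as $R\to\infty$ requires the careful domain splitting above and repeated use of the $\triangle_{2}$-condition to push constants through $M$ without destroying integrability. The analogous but much simpler observations for the pure $L^{M}$-part of the norm, and for the mollification step, should then fall into place routinely.
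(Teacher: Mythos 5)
The paper does not prove this proposition; it is quoted verbatim from Bonder--Salort \cite{7}, so there is no in-text argument to compare against. Your overall strategy --- embed $W^{s,M}(\mathbb{R}^d)$ isometrically as a closed subspace of $L^M(\mathbb{R}^d)\times L^M(\mathbb{R}^{2d},d\mu)$ for the functional-analytic part, and truncate-then-mollify for density --- is the natural one and is the route taken in \cite{7}. The first half is sound: with both $M$ and $\overline{M}$ in $\Delta_2$, each Orlicz factor is reflexive and separable, closedness of the image follows from extracting an a.e.\ convergent subsequence, and the Banach/reflexive/separable conclusions transfer.

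There is, however, a real gap in your treatment of the cross term on $\{|x-y|>1\}$. On that region the Lipschitz bound $|\eta_R(x)-\eta_R(y)|\le CR^{-1}|x-y|$ is useless (it grows with $|x-y|$), and ``tail integrability of $u\in L^M$'' does not by itself produce an integrable majorant, because $\int_{|x-y|>1}|x-y|^{-d}\,dx=\infty$. What actually closes this region is the elementary convexity inequality $M(ts)\le tM(s)$ for $0\le t\le 1$ (valid since $M(0)=0$): bounding $|\eta_R(x)-\eta_R(y)|\le 1$ and taking $t=|x-y|^{-s}\le 1$ gives
\[
M\!\left(\frac{u(y)\,(\eta_R(y)-\eta_R(x))}{|x-y|^{s}}\right)\le \frac{M(|u(y)|)}{|x-y|^{s}},
\]
and the right-hand side is $d\mu$-integrable over $\{|x-y|>1\}$ precisely because of the extra $|x-y|^{-s}$; dominated convergence (with $\eta_R(y)-\eta_R(x)\to 0$ pointwise) then finishes it. The same sub-homogeneity trick is what lets you extract $|x-y|^{1-s}$ from inside $M$ on $\{|x-y|\le 1\}$, which you implicitly rely on but do not state. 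I would also flag that ``dominated convergence in $L^M(d\mu)$'' in the mollification step is really continuity of the diagonal translation $(x,y)\mapsto(x-z,y-z)$ in $L^M(\mathbb{R}^{2d},d\mu)$; this holds under $\Delta_2$ (the measure $d\mu$ is invariant under diagonal translations and $C_c$ functions supported off the diagonal are dense), but it deserves an explicit line rather than being folded into Jensen.
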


A variant of the well-known Fr\`{e}chet-Kolmogorov compactness theorem gives the compactness of the
embedding of $W^{s,M}$ into $L^{M}$.

\begin{thm}[\cite{7}]\label{th}
  Let $M$ be an $N$-function, $s\in(0,1)$ and $\Omega$ a bounded open set in $\mathbb{R}^{d}$. Then the embedding $$W^{s,M}(\Omega)\hookrightarrow L^{M}(\Omega)$$ is compact.
\end{thm}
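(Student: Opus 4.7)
The strategy would be to invoke a Fréchet--Kolmogorov--Riesz compactness criterion in the Orlicz space $L^{M}(\Omega)$. Under the standing $\Delta_{2}$-condition on $M$, convergence in modular is equivalent to convergence in $L^{M}(\Omega)$ (Proposition \ref{cv}), so a bounded set $\mathcal F\subset L^{M}(\Omega)$ is relatively compact as soon as the translates are equicontinuous, i.e.\ $\sup_{u\in\mathcal F}\int_{\Omega_{h}}M\bigl(|u(x+h)-u(x)|\bigr)\,dx\to 0$ as $|h|\to 0$, where $\Omega_{h}=\Omega\cap(\Omega-h)$. Since $\Omega$ is bounded, the ``vanishing at infinity'' part of the criterion is automatic, so the whole task reduces to proving equicontinuity of translations for bounded subsets of $W^{s,M}(\Omega)$.

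The heart of the argument will be the modular inequality
\begin{equation*}
\int_{\Omega_{h}}M\!\left(\frac{|u(x+h)-u(x)|}{c|h|^{s}\,[u]_{(s,M)}}\right)dx\le C_{d}\!\int\!\!\int_{\substack{x,y\in\Omega\\|x-y|\le 2|h|}}\!\!M\!\left(\frac{|u(x)-u(y)|}{[u]_{(s,M)}\,|x-y|^{s}}\right)\!\frac{dx\,dy}{|x-y|^{d}},
\end{equation*}
which I would derive by the standard averaging trick: for $x\in\Omega_{h}$ and $y$ in a ball $B(x)\subset\Omega$ of radius $\sim|h|$, I start from
$|u(x+h)-u(x)|\le|u(x+h)-u(y)|+|u(y)-u(x)|$, average over $y\in B(x)$, and apply Jensen's inequality to the convex function $M(\cdot/\lambda)$ with $\lambda$ chosen so that the factors $|x+h-y|^{s}/\lambda$ and $|y-x|^{s}/\lambda$ are each at most $1/[u]_{(s,M)}$. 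Integrating in $x$, applying Fubini, and using $|B(x)|\gtrsim|h|^{d}$ together with $|x-y|^{-d}\le 2^{d}|h|^{-d}$ on the short-range set $\{|x-y|\le 2|h|\}$, produces the displayed inequality. Its right-hand side is bounded by a dimensional constant thanks to the definition of $[u]_{(s,M)}$, and translating this modular control into a Luxemburg-norm control via Lemma \ref{lem2} would yield $\|\tau_{h}u-u\|_{(M,\Omega_{h})}\le C|h|^{s}\,[u]_{(s,M)}\to 0$ uniformly on bounded sets of $W^{s,M}(\Omega)$, which closes the compactness argument.

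The hard part will be the boundary behaviour in the averaging step: for $x\in\Omega_{h}$ close to $\partial\Omega$ the ball $B(x)\subset\Omega$ of radius $|h|$ may fail to satisfy the volume bound $|B(x)|\gtrsim|h|^{d}$, and Theorem \ref{th} assumes no regularity of $\partial\Omega$. My plan to handle this is to extend $u$ by zero to $\mathbb R^{d}$, perform the averaging over full balls in $\mathbb R^{d}$, and then absorb the extra contribution produced by the extension into the $L^{M}(\Omega)$-norm of $u$ restricted to a boundary strip of thickness $\sim|h|$; the Lebesgue measure of this strip tends to $0$ with $|h|$, and absolute continuity of the $L^{M}$-norm (a consequence of the $\Delta_{2}$-condition) makes this extra term small uniformly on bounded subsets of $W^{s,M}(\Omega)$.
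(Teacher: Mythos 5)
This theorem is not proved in the paper at hand: it is cited verbatim from Bonder--Salort \cite{7}, and the authors simply remark that it follows from ``a variant of the well-known Fr\`echet--Kolmogorov compactness theorem.'' So your choice of strategy is in line with how the result is obtained in the cited source, and the interior part of your argument is sound: the averaging/Jensen trick to get the translation estimate
\begin{equation*}
\int_{\Omega_{h}}M\!\left(\frac{|u(x+h)-u(x)|}{c|h|^{s}[u]_{(s,M)}}\right)dx\lesssim
\iint_{|x-y|\le 2|h|}M\!\left(\frac{|u(x)-u(y)|}{[u]_{(s,M)}|x-y|^{s}}\right)\frac{dx\,dy}{|x-y|^{d}}\le 1,
\end{equation*}
followed by the passage from modular to Luxemburg norm via Lemma~\ref{lem2}, is exactly the expected computation and does yield $\|\tau_{h}u-u\|_{(M,\Omega_{h})}\lesssim|h|^{s}[u]_{(s,M)}$, uniformly over bounded sets, with no boundary issue because the balls used in the averaging can be taken around the midpoint and have radius a small multiple of $|h|$, hence lie inside $\Omega_{2|h|}\subset\Omega$.

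The genuine gap is in your proposed handling of the boundary. Once you extend $u$ by zero and apply the Riesz--Kolmogorov criterion on $\mathbb R^{d}$, the translation modular picks up the extra term $\int_{\Omega\triangle(\Omega-h)}M(u/\lambda)\,dx$, i.e.\ the $L^{M}$-modular of $u$ over a strip of thickness $\sim|h|$ along $\partial\Omega$. You argue this is small ``uniformly on bounded subsets'' by absolute continuity of the $L^{M}$-norm. But absolute continuity is a \emph{pointwise} property of individual elements of $L^{M}$: it says $\|u\chi_{E_{n}}\|_{(M)}\to 0$ when $|E_{n}|\to 0$ for each fixed $u$, and this is emphatically \emph{not} uniform over norm-bounded families (already in $L^{p}(0,1)$ the functions $n^{1/p}\chi_{(0,1/n)}$ are a counterexample). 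In fact, uniform absolute continuity of a bounded set is essentially equivalent to its precompactness, so invoking it here is circular. To close this step you would need a genuinely new input, e.g.\ a fractional Poincar\'e-type estimate $\|u-\bar u_{\Omega}\|_{(M,\Omega)}\lesssim[u]_{(s,M)}$ combined with control of $\|u-\bar u_\Omega\|_{(M,S_\delta)}$ on thin strips, or an exhaustion of $\Omega$ by compact subsets plus a diagonal argument that still requires a uniform tail estimate. A second, more minor issue: your norm/modular translation invokes Lemma~\ref{lem2} and hence condition $(m_{1})$ (equivalently the $\Delta_{2}$-condition), which is not among the hypotheses of Theorem~\ref{th} as stated; for the proof to apply at that level of generality one needs to argue without $(m_{1})$ or note explicitly that the cited reference carries such an assumption.
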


We recall that the fractional $M$-Laplacian operator is defined as
\begin{equation}\label{17}
(-\triangle)^{s}_{m}u(x)= P.V.\int_{\mathbb{R}^{d}} m\bigg{(}\frac{u(x)-u(y)}{|x-y|^{s}}\bigg{)}\frac{dy}{|x-y|^{d+s}},
\end{equation}
where $P.V$ is the principal value.\\
This operator is well defined between $W^{s,M}(\mathbb{R}^{d})$ and its dual space  $W^{-s,\overline{M}}(\mathbb{R}^{d})$. In fact, in [\cite{7}, Theorem 6.12] the following representation formula is provided
\begin{equation}\label{18}
\langle(-\triangle)^{s}_{m}u,v\rangle=\frac{1}{2}\int_{\mathbb{R}^{d}}\int_{\mathbb{R}^{d}} m\bigg{(}\frac{u(x)-u(y)}{|x-y|^{s}}\bigg{)}\frac{v(x)-v(y)}{|x-y|^{s}}\frac{dxdy}{|x-y|^{d}},
\end{equation}
for all $v\in W^{s,M}(\mathbb{R}^{d})$.

\section{Embedding Theorems}
After the above brief review, we are able to prove our main results involving
the fractional Orlicz-Sobolev spaces. %Let $\Omega$ be an open subset of $\mathbb{R}^{d}$.% We begin by considering the following case:

\subsection{Proof of Theorem \ref{ceb}}

The proof will be carried out in several lemmas. we start by establishing an estimate for the Sobolev conjugate $N$-function $M_{*}$ defined by \eqref{6}.

\begin{lemma}\label{ceb1}
  Let $M$ be an $N$-function satisfying $(m_{1})$, $(M_{3})$ and $s\in(0,1)$. % and suppose that, for some $p$ such that $1\leq p<N$, the function $B$ defined by $B(t)=M(t^{\frac{1}{p}})$ is an $N$-function.
  % $M_{*}$ be defined by \eqref{6}.
  Then the following conclusions hold true.%may be drawn.
  \begin{enumerate}
    \item $t\mapsto[M_{*}(t)]^{\frac{d-s}{d}}$ is an $N$-function.% in particular, $M_{*}$ is an $N$-function.
    \item For every $\epsilon>0$, there exists a constant $K_{\epsilon}$ such that for every $t$,
       \begin{equation}\label{Kepsilon}
         [M_{*}(t)]^{\frac{d-s}{d}}\leq \epsilon M_{*}(t)+ K_{\epsilon}t.
       \end{equation}
  \end{enumerate}
\end{lemma}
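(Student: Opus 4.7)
The plan is to set $\alpha=(d-s)/d\in(0,1)$ and $\Phi(t):=[M_*(t)]^{\alpha}$, verify each defining property of an $N$-function for $\Phi$ in turn, then prove the inequality in part (2) by a three-region argument. Evenness, continuity, $\Phi(0)=0$ and $\Phi(t)\to+\infty$ as $t\to\infty$ are inherited directly from $M_*$. For $\Phi(t)/t\to 0$ as $t\to 0^+$ and $\to+\infty$ as $t\to\infty$, I will combine Lemma \ref{M***} (which gives the indices $m_0^*\le tm_*(t)/M_*(t)\le(m^0)^*$ for $M_*$) with Lemma \ref{lem2} applied to $M_*$, so that $M_*(t)$ is controlled by $M_*(1)t^{(m^0)^*}$ on $[0,1]$ and by $M_*(1)t^{m_0^*}$ on $[1,\infty)$. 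A short check using $m_0>1$ and $s<1$ gives $d/(d-s+1)<1<m_0\le m^0$, whence $\alpha m_0^*>1$ and $\alpha(m^0)^*>1$, which delivers both limits.

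The main obstacle will be convexity, since the exponent $\alpha$ is less than $1$ and so does not preserve convexity for free. The plan is to obtain a clean formula for $\Phi'$. Differentiating $M_*^{-1}(t)=\int_0^{t}M^{-1}(\tau)\tau^{-(d+s)/d}\,d\tau$ and applying the inverse function rule yield
\[
m_*(u)=\frac{M_*(u)^{(d+s)/d}}{M^{-1}(M_*(u))}.
\]
Plugging this into $\Phi'(t)=\alpha M_*(t)^{\alpha-1}m_*(t)$ and using $\alpha-1=-s/d$ collapses the exponents to
\[
\Phi'(t)=\alpha\,\frac{M_*(t)}{M^{-1}(M_*(t))}.
\]
It then suffices to show $u\mapsto u/M^{-1}(u)$ is non-decreasing. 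Setting $v=M^{-1}(u)$ this becomes $v\mapsto M(v)/v$, which is non-decreasing for $v>0$ because $M$ is convex with $M(0)=0$; composing with the non-decreasing map $u\mapsto M^{-1}(u)$ preserves monotonicity. Hence $\Phi'$ is non-decreasing and $\Phi$ is convex.

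For part (2), I will split $[0,\infty)$ into three pieces. Writing $[M_*(t)]^{\alpha}=M_*(t)\cdot M_*(t)^{-s/d}$ and using $M_*(t)\to\infty$, there exists $T_\epsilon$ such that $M_*(t)^{-s/d}\le\epsilon$ for $t\ge T_\epsilon$; hence $\Phi(t)\le\epsilon M_*(t)$ on $[T_\epsilon,\infty)$. On $[0,1]$, Lemma \ref{lem2} gives $M_*(t)\le M_*(1)t^{m_0^*}$, so $\Phi(t)\le M_*(1)^{\alpha}t^{\alpha m_0^*}\le M_*(1)^{\alpha}\,t$ since $\alpha m_0^*>1$ and $t\le 1$. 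On the compact interval $[1,T_\epsilon]$, $\Phi$ is bounded by some constant $C_\epsilon$, and since $t\ge 1$ one has $C_\epsilon\le C_\epsilon t$. Taking $K_\epsilon:=\max\{M_*(1)^{\alpha},C_\epsilon\}$ then yields the claimed estimate $\Phi(t)\le\epsilon M_*(t)+K_\epsilon t$ uniformly in $t\ge 0$.
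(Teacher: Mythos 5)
Your argument is correct. The paper gives no proof of this lemma, simply deferring to Lemma~3.1 of \cite{Azroul}, so your self-contained write-up is a completion rather than an alternative route. The delicate step is convexity of $\Phi=M_*^{\alpha}$ with $\alpha=(d-s)/d<1$, and your treatment of it is the nicest part of the proposal: differentiating $M_*^{-1}(t)=\int_0^t M^{-1}(\tau)\tau^{-(d+s)/d}\,d\tau$ and inverting gives $m_*(u)=M_*(u)^{(d+s)/d}/M^{-1}(M_*(u))$, and then the exponents in $\Phi'(t)=\alpha M_*(t)^{\alpha-1}m_*(t)$ collapse (since $\alpha-1=-s/d$) to $\Phi'(t)=\alpha\,M_*(t)/M^{-1}(M_*(t))$; monotonicity of $v\mapsto M(v)/v$ (from convexity of $M$ and $M(0)=0$) composed with the monotone $M^{-1}$ does the rest, with no second-derivative computation. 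The three-region argument for part~(2) hinges on $\alpha m_0^*>1$, and your reduction to $m_0>d/(d-s+1)$, which holds because $s<1<m_0$, is exactly right. One small slip: in the sketch of part~(1) you say $M_*(t)$ is ``controlled by $M_*(1)t^{(m^0)^*}$ on $[0,1]$,'' but Lemma~\ref{lem2} applied to $M_*$ with the indices from Lemma~\ref{M***} gives $M_*(1)t^{(m^0)^*}\le M_*(t)\le M_*(1)t^{m_0^*}$ for $t\le1$ (recall $m_0^*\le(m^0)^*$), so $M_*(1)t^{(m^0)^*}$ is the \emph{lower} bound; to get $\Phi(t)/t\to0$ as $t\to0$ you need the \emph{upper} bound with exponent $m_0^*$, which you do invoke correctly later in part~(2). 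This is a mislabelling only and does not affect the validity of the argument.
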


\begin{proof}
The proof is essentially contained in [\cite{Azroul}, Lemma 3.1].
\end{proof}

\begin{lemma}\label{ceb2}
  Let $\Omega$ be an open subset of $\mathbb{R}^{d}$ and $s\in(0,1)$. If $u\in W^{s,M}(\Omega)$ and $f$ is Lipschitz continuous on $\mathbb{R}$ such that $f(0)=0$, then $f\circ u$ belongs to $W^{s,M}(\Omega)$.
\end{lemma}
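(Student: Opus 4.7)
The plan is to reduce everything to the pointwise inequality furnished by the Lipschitz hypothesis and then exploit that $M$, being an even $N$-function, is monotone non-decreasing on $[0,\infty)$. Let $L>0$ be a Lipschitz constant for $f$. Since $f(0)=0$, this gives the two key estimates
\[
|f(u(x))|\le L\,|u(x)|,\qquad |f(u(x))-f(u(y))|\le L\,|u(x)-u(y)|.
\]

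First, I would verify that $f\circ u\in L^M(\Omega)$. Since $u\in L^M(\Omega)$, for $\lambda:=\|u\|_{(M,\Omega)}>0$ we have $\int_\Omega M(u(x)/\lambda)\,dx\le 1$. Using the Lipschitz bound together with the fact that $M$ is even and non-decreasing on $[0,\infty)$,
\[
M\!\left(\frac{f(u(x))}{L\lambda}\right)\le M\!\left(\frac{u(x)}{\lambda}\right),
\]
so integrating gives $\|f\circ u\|_{(M,\Omega)}\le L\,\|u\|_{(M,\Omega)}$.

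Second, I would control the Gagliardo semi-norm the same way. With $\eta:=[u]_{(s,M,\Omega)}$ (or any $\lambda$ slightly larger, to cover the case when the infimum is not attained), the Lipschitz estimate applied pointwise yields
\[
\left|\frac{f(u(x))-f(u(y))}{L\eta\,|x-y|^s}\right|\le \left|\frac{u(x)-u(y)}{\eta\,|x-y|^s}\right|,
\]
whence monotonicity of $M$ on $[0,\infty)$ produces
\[
\int_\Omega\!\int_\Omega M\!\left(\frac{f(u(x))-f(u(y))}{L\eta\,|x-y|^s}\right)\!\frac{dx\,dy}{|x-y|^d}\le \int_\Omega\!\int_\Omega M\!\left(\frac{u(x)-u(y)}{\eta\,|x-y|^s}\right)\!\frac{dx\,dy}{|x-y|^d}\le 1,
\]
so $[f\circ u]_{(s,M,\Omega)}\le L\,[u]_{(s,M,\Omega)}<\infty$.

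Combining the two bounds yields $\|f\circ u\|_{(s,M,\Omega)}\le L\,\|u\|_{(s,M,\Omega)}$ and therefore $f\circ u\in W^{s,M}(\Omega)$. There is no genuine obstacle here: the only thing to be careful about is not to require $\triangle_2$ or convexity tricks, since the evenness plus monotonicity of $M$ on the positive axis already absorbs the Lipschitz constant into the Luxemburg scaling parameter. One could also observe that the argument shows the map $u\mapsto f\circ u$ is Lipschitz from $W^{s,M}(\Omega)$ to itself with constant $L$, which is typically what gets used later.
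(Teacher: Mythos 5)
Your proof is correct and follows essentially the same route as the paper's: both reduce to the pointwise Lipschitz estimates $|f(u(x))|\le L\,|u(x)|$ and $|f(u(x))-f(u(y))|\le L\,|u(x)-u(y)|$, invoke that $M$ is even and non-decreasing on $[0,\infty)$, and absorb the constant $L$ into the Luxemburg scaling parameter for both the $L^M$ norm and the Gagliardo modular. One caveat about your closing remark, though: the bound $\|f\circ u\|_{(s,M,\Omega)}\le L\,\|u\|_{(s,M,\Omega)}$ does not make $u\mapsto f\circ u$ Lipschitz on $W^{s,M}(\Omega)$; Lipschitz continuity of that map would require a pointwise estimate of $|f(u(x))-f(v(x))-f(u(y))+f(v(y))|$ by $L\,|(u-v)(x)-(u-v)(y)|$, which a merely Lipschitz $f$ does not supply, so only boundedness (at the origin) of the superposition operator follows, not a Lipschitz modulus.
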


\begin{proof}
  Let $u\in  W^{s,M}(\Omega)$, then there exists $\theta>0$ such that $$\int_{\Omega} M(\theta u(x))dx<\infty\ \ \text{and}\ \ \int_{\Omega}\int_{\Omega} M\bigg{(}\frac{|u(x)-u(y)|}{|x-y|^{s}}\bigg{)}\frac{dxdy}{|x-y|^{d}}<\infty.$$
Let $K > 0$ denotes the Lipschitz constant of $f$, since $f(0)=0$, then
$$\int_{\Omega}M\bigg{(}\frac{\theta}{K} (f\circ u(x))\bigg{)}dx=\int_{\Omega}M\bigg{(}\frac{\theta}{K} (f\circ u(x)-f(0))\bigg{)}dx\leq
\int_{\Omega}M(\theta u(x))dx<\infty.$$ Therefore $f\circ u\in L^{M}(\Omega)$.\\

Now, let $\lambda=\frac{1}{K}$, we have
\begin{align*}
  \int_{\Omega}\int_{\Omega} M\bigg{(}\frac{\lambda|f(u(x))-f(u(y))|}{|x-y|^{s}}\bigg{)}\frac{dxdy}{|x-y|^{d}} & \leq \int_{\Omega}\int_{\Omega} M\bigg{(}\frac{|u(x)-u(y)|}{|x-y|^{s}}\bigg{)}\frac{dxdy}{|x-y|^{d}}<\infty.
\end{align*}
this implies that $\lambda (f\circ u)\in W^{s,M}(\Omega)$, thus $\lambda^{-1}\lambda (f\circ u)=f\circ u\in W^{s,M}(\Omega)$. This ends the proof.
\end{proof}

\begin{lemma}\label{ceb3}
  Let $\Omega$ be a bounded subset of $\mathbb{R}^{d}$ with $C^{0,1}$-regularity and bounded boundary. Let $M$ be an $N$-function satisfying condition $(m_{1})$. Then, given $0<s'<s<1$, it holds that the embedding
  \begin{equation}\label{s'}
    W^{s,M}(\Omega)\hookrightarrow W^{s',1}(\Omega),
  \end{equation}
  is continuous.
\end{lemma}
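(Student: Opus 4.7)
The plan is to bound the full norm $\|u\|_{W^{s',1}(\Omega)} = \|u\|_{L^{1}(\Omega)} + [u]_{W^{s',1}(\Omega)}$ by a constant multiple of $\|u\|_{(s,M,\Omega)}$. The $L^{1}$ part is handled by Hölder's inequality for Orlicz spaces: since $\Omega$ is bounded, $\|\chi_{\Omega}\|_{L^{\overline{M}}(\Omega)}$ is finite, so
\[
\|u\|_{L^{1}(\Omega)} \le \|u\|_{L^{M}(\Omega)} \|\chi_{\Omega}\|_{L^{\overline{M}}(\Omega)} \le C_{1}\|u\|_{(M,\Omega)},
\]
after invoking \eqref{tman}. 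Everything else is the Gagliardo part.

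The key trick for the seminorm is to split $|x-y|^{-(d+s')} = |x-y|^{-s}\cdot|x-y|^{s-s'}\cdot|x-y|^{-d}$ and apply the Young inequality \eqref{Young} with $A=|u(x)-u(y)|/|x-y|^{s}$ and $B=|x-y|^{s-s'}$. After first rescaling $u$ by $\lambda=[u]_{(s,M,\Omega)}$ (so the Gagliardo modular of $u/\lambda$ is bounded by $1$), this yields
\[
\int_{\Omega}\int_{\Omega}\frac{|u(x)-u(y)|}{|x-y|^{d+s'}}\,dx\,dy \;\le\; \lambda\bigl(1+C_{2}\bigr),\qquad C_{2}:=\int_{\Omega}\int_{\Omega}\overline{M}\bigl(|x-y|^{s-s'}\bigr)\frac{dx\,dy}{|x-y|^{d}},
\]
so that $[u]_{W^{s',1}(\Omega)} \le (1+C_{2})[u]_{(s,M,\Omega)}$. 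Combining this with the $L^1$ estimate finishes the proof.

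The one step that requires real care, and which I expect to be the main obstacle, is showing $C_{2}<\infty$. The condition $(m_{1})$ forces both $m_{0}>1$ and $m^{0}<\infty$, which transfers to conjugate bounds on $\overline{M}$: namely $\overline{M}$ satisfies an inequality of the form $\overline{m}_{0} \le t\overline{m}(t)/\overline{M}(t) \le \overline{m}^{0}$ with $\overline{m}_{0}=m^{0}/(m^{0}-1)>1$. Hence Lemma \ref{lem2} applied to $\overline{M}$ gives, for $r=|x-y|$ small,
\[
\overline{M}(r^{s-s'}) \;\le\; \overline{M}(1)\,r^{(s-s')\overline{m}_{0}},
\]
with an analogous estimate on $r\in[1,\mathrm{diam}(\Omega)]$. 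Passing to polar coordinates around $y$ and integrating $x$ first (so one factor $|\Omega|$ appears), I would bound $C_{2}$ up to constants by $\int_{0}^{\mathrm{diam}(\Omega)}\overline{M}(r^{s-s'})\,dr/r$, which is finite because the exponent $(s-s')\overline{m}_{0}$ is strictly positive and $\Omega$ is bounded; the singularity at $r=0$ is tamed precisely by $s>s'$. Collecting the pieces yields $\|u\|_{W^{s',1}(\Omega)} \le C\,\|u\|_{(s,M,\Omega)}$.
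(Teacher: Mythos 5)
Your proof is correct and reaches the same conclusion, but by a genuinely different route. The paper's argument rewrites $|x-y|^{-(d+s')}$ as $|x-y|^{s-s'}\,|x-y|^{-d}$, normalizes $[u]_{(s,M)}=1$ and $M(1)=1$, then splits $\Omega\times\Omega$ into the set $A$ where the $s$-difference quotient $h_u^s$ has modulus $\le 1$ (controlled directly by the local integrability of $|x-y|^{-(d-(s-s'))}$) and its complement $B$ (where $|h_u^s|\le M(h_u^s)$, and the Gagliardo modular of $u$ is $\le1$). You instead apply Young's inequality $ab\le M(a)+\overline{M}(b)$ pointwise to the factorization $h_u^{s'}=h_u^{s}\cdot|x-y|^{s-s'}$ over all of $\Omega\times\Omega$, which trades the case analysis for having to verify that $C_2:=\int_\Omega\int_\Omega \overline{M}(|x-y|^{s-s'})\,\frac{dxdy}{|x-y|^d}$ is finite. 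Both routes produce an explicit bound $[u]_{s',1}\le C\,[u]_{(s,M)}$, and both handle the $L^1$ part identically via the continuous embedding $L^M\hookrightarrow L^1$ on bounded domains. One remark on the step you flag as the most delicate: to prove $C_2<\infty$ you invoke the conjugate index $\overline{m}_0=m^0/(m^0-1)>1$ and Lemma~\ref{lem2} for $\overline{M}$, but this is heavier than needed. Since $\overline{M}$ is convex with $\overline{M}(0)=0$, one has $\overline{M}(t)\le\overline{M}(1)\,t$ for $0\le t\le1$, which already gives $\int_0^{\delta}\overline{M}(r^{s-s'})\,dr/r\le\overline{M}(1)\,\delta^{s-s'}/(s-s')$ whenever $s>s'$. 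So, much as in the paper's proof (where $t\le M(t)$ for $t\ge1$ also follows from convexity alone once $M(1)=1$), your argument at bottom needs only the $N$-function axioms, and the full strength of $(m_1)$ is not essential to Lemma~\ref{ceb3}.
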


\begin{proof}
We closely follow the method employed in [\cite{B}, Proposition 2.9]. The normalization condition $M(1) = 1$ is by no means restrictive. From Lemma \ref{lem2} it is inferred that,
\begin{equation}\label{aa}
  t\leq M(t),\ \text{for every}\ t\geq1.
\end{equation}
  Let $u\in W^{s,M}(\Omega)$ such that $[u]_{(s,M)}=1$ and define $$A:=\{(x,y)\in \Omega\times\Omega:\ |h_{u}^{s}(x,y)|\leq1\},$$
$$B:= (\Omega\times\Omega)\backslash A,$$ where $h_{u}^{s}(x,y)=\displaystyle\frac{u(x)-u(y)}{|x-y|^{s}}.$\\
We compute,
\begin{align*}
  \int_{\Omega}\int_{\Omega}|h_{u}^{s'}(x,y)|\frac{dxdy}{|x-y|^{d}} & =  \int_{\Omega}\int_{\Omega}|h_{u}^{s}(x,y)||(x-y)|^{s-s'}\frac{dxdy}{|x-y|^{d}}\\
                                                  &=\bigg{(}\int\int_{A}+\int\int_{B}\bigg{)}|h_{u}^{s}(x,y)|\frac{dxdy}{|x-y|^{d-(s-s')}}\\
                                                   &:=J_{1}+J_{2},
\end{align*}
Notice that $$J_{1}\leq \int_{\Omega}\int_{\Omega}\frac{dxdy}{|x-y|^{d-(s-s')}}\leq \meas(\Omega)\omega_{d}\frac{\delta^{(s-s')}}{s-s'},$$ where $\delta$ is the diameter of $\Omega$.
To estimate the second term, we invoke \eqref{aa} and we obtain $$J_{2}\leq \delta^{(s-s')}\int_{\Omega}\int_{\Omega}M(h_{u}^{s}(x,y))\frac{dxdy}{|x-y|^{d}}<\infty.$$
Since $[u]_{(s,M)}=1$, then $\displaystyle\int_{\Omega}\int_{\Omega}M(h_{u}^{s}(x,y))\frac{dxdy}{|x-y|^{d}}\leq1$ and hence
\begin{equation}\label{sss}
  [u]_{s',1}\leq \bigg{(}\frac{\meas(\Omega)\omega_{d}}{s-s'}+1\bigg{)}\delta^{(s-s')}.
\end{equation}
 For arbitrary $u\in W^{s,M}(\Omega)\setminus\{0\}$, let $v=\frac{u}{[u]_{(s,M)}}$, using inequality \eqref{sss} we get $$[v]_{s',1}\leq \bigg{(}\frac{\meas(\Omega)\omega_{d}}{s-s'}+1\bigg{)}\delta^{(s-s')}.$$
By homogeneity of the seminorm $[.]_{s',1}$, we obtain
\begin{equation}\label{ineee1}
  [u]_{s',1}\leq \bigg{(}\frac{\meas(\Omega)\omega_{d}}{s-s'}+1\bigg{)}\delta^{(s-s')}[u]_{(s,M)}.
\end{equation}
On the other hand, since $\Omega$ is bounded, there exist $C>0$ such that
\begin{equation}\label{ineee2}
  \|u\|_{L^{1}(\Omega)}\leq C\|u\|_{(M)}\ (\text{see}\ [\cite{Rao},\ \text{Corollary}\ 3]).
\end{equation}
Combining \eqref{ineee1} and \eqref{ineee2} we get the desired result.
\end{proof}

\begin{proof}[{\bf Proof of theorem \ref{ceb}}]
  Let $u\in W^{s,M}(\Omega)\setminus\{0\}$ and suppose for the moment that $u$ is bounded on $\Omega$. Then $\lambda\rightarrow\int_{\Omega}M_{*}(|u(x)|/\lambda)dx$ decreases continuously from infinity to zero as $\lambda$ increases from zero to infinity. So that
  \begin{equation}\label{k}
    \int_{\Omega}M_{*}\bigg{(}\frac{|u(x)|}{k}\bigg{)}dx=1\ \ \ \ \text{for some}\ k>0,.
  \end{equation}
 By the definition of the norm \eqref{19}, we see that  $k=\|u\|_{(M_{*})}$.\\
  Let $\omega(t)=[M_{*}(t)]^{\frac{d-s'}{d}}$ and set $f(x)=\omega\bigg{(}\displaystyle\frac{|u(x)|}{k}\bigg{)}$. According to Lemma \ref{ceb1}, $\omega$ is Lipschitz continuous, by lemma \ref{ceb2} and  Lemma \ref{ceb3}, $f\in W^{s,M}(\Omega)\cap W^{s',1}(\Omega)$. %Since $\Omega$ is  a bounded open subset of $\mathbb{R}^{d}$ with $C^{0,1}$-regularity and bounded boundary,
   The well known embedding theorem of the classical fractional Sobolev space $W^{s',1}(\Omega)$ (see [\cite{23}, Theorem 6.7]), gives $$W^{s',1}(\Omega)\hookrightarrow L^{\frac{d}{d-s'}}(\Omega).$$
  So there is a constant $C_{1}>0$ such that
  \begin{align}
     1&= \bigg{(}\int_{\Omega}M_{*}\bigg{(}\frac{|u(x)|}{k}\bigg{)}dx\bigg{)}^{\frac{d-s'}{d}}=\|f\|_{L^{\frac{d}{d-s^{'}}}}\leq C_{1}(\|f\|_{L^{1}(\Omega)}+[f]_{s',1})\nonumber\\
      &=C_{1}\bigg{(}\int_{\Omega}\omega\bigg{(}\frac{|u(x)|}{k}\bigg{)}dx+\int_{\Omega}\int_{\Omega}\frac{|f(x)-f(y)|}{|x-y|^{d+s'}}dxdy\bigg{)}\\
      &:=C_{1}I_{1}+C_{1}I_{2}.\nonumber
  \end{align}
On one hand, by \eqref{Kepsilon} and the H\"{o}lder inequality, for $\epsilon=\displaystyle\frac{1}{2C_{1}}$, we have
\begin{align}\label{k1M}
  C_{1}I_{1}&\leq \frac{1}{2}\int_{\Omega}M_{*}\bigg{(}\frac{|u(x)|}{k}\bigg{)}dx+\frac{C_{1}K_{\epsilon}}{k}\int_{\Omega}|u(x)|dx\nonumber\\
            &\leq \frac{1}{2}+\frac{C_{2}}{k}\|u\|_{(M)},
\end{align}
 \text{where} $C_{2}=2C_{1}K_{\epsilon}\|\chi_{\Omega}\|_{(\overline{M})}$.\\

On the other hand, since $\omega$ is Lipschitz continuous, there exists $K>0$ such that,
$$C_{1}I_{2}\leq \frac{C_{1}K}{k}\int_{\Omega}\int_{\Omega}\frac{|u(x)-u(y)|}{|x-y|^{N+s'}}dxdy=\frac{C_{1}K}{k}[u]_{s',1}.$$
By \eqref{ineee1}, since $s^{'}<s$, there exists $C_{3}>0$ such that  $$[u]_{s',1}\leq C_{3} [u]_{(s,M)},$$ so
\begin{equation}\label{k1Mf}
  C_{1}I_{2}\leq \frac{C_{4}}{k}[u]_{(s,M)},
\end{equation}
where $C_{4}=KC_{1}C_{3}$. Combining \eqref{k1M} and \eqref{k1Mf}, we obtain
$$1\leq \frac{1}{2}+\frac{C_{2}}{k}\|u\|_{(M)}+\frac{C_{4}}{k}[u]_{(s,M)},$$
this implies that, $$\frac{k}{2}\leq C_{2}\|u\|_{(M)}+C_{4}[u]_{(s,M)}.$$
from which it follows that \begin{equation}\label{injection}
                             \|u\|_{(M_{*})}\leq C_{5}\|u\|_{(s,M)},
                           \end{equation}
                           where $C_{5}=\max\{2C_{2},2C_{4}\}$.\\

To extend \eqref{injection} to arbitrary $u\in W^{s,M}(\Omega)$, let
$$f_{n}(y)=\begin{cases}
             y & \mbox{if }\ |y|\leq n \\
             n\ sgn(y)\ & \mbox{if}\ |y|>n
           \end{cases}\ \ \ \
           \text{and}\ \ \ \ u_{n}(x)=f_{n}\circ u(x).$$
 Clearly $f_{n}$ is $1$-Lipschitz continuous function. By Lemma \ref{ceb2}, $(u_{n})$ belongs to $W^{s,M}(\Omega)$.
 So in view of \eqref{injection}
 \begin{equation}\label{000}
   \|u_{n}\|_{(M_{*})}\leq C_{5}\|u_{n}\|_{(s,M)}.
 \end{equation}
 On the other hand, we have
\begin{equation}\label{0000}
  \|u_{n}\|_{(s,M)}\leq \|u\|_{(s,M)},
\end{equation}
indeed, since $|u_{n}(x)|\leq |u(x)|$, for all $x\in\Omega$, then
 \begin{align*}
          \int_{\Omega}\int_{\Omega}M\bigg{(}\frac{|u_{n}(x)-u_{n}(y)|}{[u]_{(s,M)}|x-y|^{s}}\bigg{)}\frac{dxdy}{|x-y|^{N}} & \leq \int_{\Omega}\int_{\Omega}M\bigg{(}\frac{|u(x)-u(y)|}{[u]_{(s,M)}|x-y|^{s}}\bigg{)}\frac{dxdy}{|x-y|^{N}}\leq1,
        \end{align*}
        and $$\int_{\Omega}M\bigg{(}\frac{|u_{n}(x)|}{\|u\|_{(M)}}\bigg{)}dx\leq \int_{\Omega}M\bigg{(}\frac{|u(x)|}{\|u\|_{(M)}}\bigg{)}dx\leq1,$$
        then $$[u_{n}]_{(s,M)}\leq [u]_{(s,M)}\ \ \text{and}\ \ \|u_{n}\|_{(M)}\leq\|u\|_{(M)},$$
        thus \eqref{0000} is deduced. Combining \eqref{000} and \eqref{0000}, we get
 \begin{equation}\label{ineq}
   \|u_{n}\|_{(M_{*})}\leq C_{5}\|u_{n}\|_{(s,M)}\leq C_{5}\|u\|_{(s,M)}.
 \end{equation} Let $k_{n}=\|u_{n}\|_{(M_{*})}$, the sequence $(k_{n})$ is non-decreasing and converges in view of \eqref{ineq}. Put $k'=\lim_{n\rightarrow+\infty}k_{n}$,
 %Therefore, $\lim_{n\rightarrow+\infty}\|u_{n}\|_{(M_{*})}=k$ exists and $k\leq C_{4}\|u\|_{(s,M)}$.
 by Fatou's Lemma we get
 $$\int_{\Omega}M_{*}\bigg{(}\frac{u(x)}{k'}\bigg{)}dx\leq \lim_{n\rightarrow+\infty}\int_{\Omega}
 M_{*}\bigg{(}\frac{u_{n}(x)}{k_{n}}\bigg{)}dx\leq 1,$$
 whence $u\in L^{M_{*}}(\Omega)$ and $$\|u\|_{(M_{*})}\leq k'=\lim_{n\rightarrow+\infty}\|u_{n}\|_{(M_{*})}\leq C_{4}\|u\|_{(s,M)}.$$
Thus the first assertion of the theorem is proved. Now, let's turn to the compactness embedding.\\

Let $S$ be a bounded subset of $W^{s,M}(\Omega)$. According to the embedding \eqref{7}, $S$ is also a bounded subset of $L^{M_{*}}(\Omega)$. On the other hand, by a classical compact embedding theorem of $W^{s',1}(\Omega)$ (see [\cite{23}, Corollary 7.2]) and Lemma \ref{ceb2}, we have $$W^{s,M}(\Omega)\hookrightarrow W^{s',1}(\Omega)\hookrightarrow L^{1}(\Omega).$$ Then, $S$ is precompact in $L^{1}(\Omega)$. Hence, by [ \cite{Adams}, Theorem 8.25], $S$ is precompact in $L^{B}(\Omega)$  whenever $B\prec\prec M_{*}$. The theorem is proved completely.
\end{proof}

\subsection{Equivalent norm in $W^{s,M}(\Omega)$}

Let $\Omega$ be an open subset of $\mathbb{R}^{d}$ and $u\in W^{s,M}(\Omega)$. Let $$\rho(u)=\int_{\Omega}M(u(x))dx, \ \ \overline{\rho}(u)=\int_{\Omega}\int_{\Omega}M\bigg{(}\frac{u(x)-u(y)}{|x-y|^{s}}\bigg{)}\frac{dxdy}{|x-y|^{N}},\ \  \ \tilde{\rho}(u)=\rho(u)+\overline{\rho}(u)$$
and
$$|u|_{(s,M,\Omega)}=\inf\bigg{\{}\lambda>0\ :\ \tilde{\rho}\bigg{(}\frac{u}{\lambda}\bigg{)}\leq1\bigg{\}}.$$

\begin{remark}\label{norm}
  We can notice using the Fatou's lemma that $$\tilde{\rho}\bigg{(}\frac{u}{|u|_{(s,M,\Omega)}}\bigg{)}\leq1,\ \text{for all}\ u\in W^{s,M}(\Omega).$$
\end{remark}

\begin{lemma}\label{3.5}
 $|.|_{(s,M,\Omega)}$ is an equivalent norm to $\|.\|_{(s,M,\Omega)}$ with the relation
 \begin{equation}\label{equivalence}
   \frac{1}{2}\|u\|_{(s,M,\Omega)}\leq |u|_{(s,M,\Omega)}\leq 2\|u\|_{(s,M,\Omega)},\ \text{for all}\ u\in W^{s,M}(\Omega).
 \end{equation}
\end{lemma}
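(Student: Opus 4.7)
The plan is to run the standard Luxemburg-versus-sum-of-norms comparison, using only the convexity of $M$ together with Remark \ref{norm}. Throughout I will write $a=\|u\|_{(M,\Omega)}$, $b=[u]_{(s,M,\Omega)}$, $\lambda=a+b=\|u\|_{(s,M,\Omega)}$ and $\mu=|u|_{(s,M,\Omega)}$. The two applications of Fatou's lemma that underlie the argument are: $\rho(u/a)\leq 1$, $\overline{\rho}(u/b)\leq 1$ (for the individual Luxemburg norms, exactly as in Remark \ref{norm}) and $\tilde{\rho}(u/\mu)\leq 1$ (Remark \ref{norm} itself).

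For the upper estimate $\mu\leq 2\lambda$, I would start from the decomposition $u/(2\lambda)=(a/(2\lambda))\cdot(u/a)$, noting that the scalar $a/(2\lambda)\in[0,1/2]$. Since $M$ is convex with $M(0)=0$, one has $M(\theta t)\leq \theta M(t)$ for $\theta\in[0,1]$, which yields
\[
\rho\!\left(\frac{u}{2\lambda}\right)\leq \frac{a}{2\lambda}\,\rho\!\left(\frac{u}{a}\right)\leq \frac{a}{2\lambda}.
\]
Applying the same trick to the difference quotient $(u(x)-u(y))/(2\lambda|x-y|^s)$ factored as $(b/(2\lambda))\cdot(u(x)-u(y))/(b|x-y|^s)$ gives $\overline{\rho}(u/(2\lambda))\leq b/(2\lambda)$. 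Adding the two bounds,
\[
\tilde{\rho}\!\left(\frac{u}{2\lambda}\right)\leq \frac{a+b}{2\lambda}=\frac{1}{2}\leq 1,
\]
so by definition of the infimum $\mu\leq 2\lambda$, which is the right-hand inequality in \eqref{equivalence}.

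For the lower estimate $\lambda\leq 2\mu$, I would simply observe that $\rho(u/\mu)$ and $\overline{\rho}(u/\mu)$ are both nonnegative with sum $\tilde{\rho}(u/\mu)\leq 1$ by Remark \ref{norm}. Hence each of them is separately $\leq 1$, which by the defining infima of the Luxemburg norm and the Gagliardo seminorm gives $a=\|u\|_{(M,\Omega)}\leq\mu$ and $b=[u]_{(s,M,\Omega)}\leq\mu$. Summing, $\lambda=a+b\leq 2\mu$, proving $\tfrac{1}{2}\|u\|_{(s,M,\Omega)}\leq |u|_{(s,M,\Omega)}$.

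There is no real obstacle here: the result is a purely formal comparison of two ways of combining the Orlicz modular $\rho$ and the Gagliardo modular $\overline{\rho}$ into a norm, and the only ingredients are the subhomogeneity $M(\theta t)\leq \theta M(t)$ for $\theta\in[0,1]$ (which follows from convexity and $M(0)=0$) and the attainment-type statement of Remark \ref{norm}. The mild care needed is to justify Remark \ref{norm} for the two individual modulars, but this is exactly the standard Fatou argument that already appears in the excerpt.
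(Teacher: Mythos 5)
Your proof of the two-sided inequality \eqref{equivalence} is correct and follows essentially the same route as the paper: both arguments rest on the convexity of $M$ (equivalently, of the modular $\tilde{\rho}$) together with Remark \ref{norm}. For the upper bound, the paper splits $\frac{1}{2\lambda}=\frac{1}{2}\cdot\frac{1}{\lambda}$ and then uses $\lambda\geq a$, $\lambda\geq b$ to compare with $\rho(u/a)\leq 1$ and $\overline{\rho}(u/b)\leq 1$, ending with $\tilde{\rho}(u/(2\lambda))\leq\frac12+\frac12=1$. You instead factor $\frac{1}{2\lambda}=\frac{a}{2\lambda}\cdot\frac{1}{a}$ (resp.\ $\frac{b}{2\lambda}\cdot\frac{1}{b}$) and apply subhomogeneity $M(\theta t)\leq\theta M(t)$ once, getting the slightly cleaner $\tilde{\rho}(u/(2\lambda))\leq\frac{a+b}{2\lambda}=\frac12$. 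The lower bound in your write-up is identical to the paper's. (For the $a=0$ or $b=0$ degenerate cases, the corresponding factor vanishes and the term drops out, so the argument still goes through; this is worth a remark, but is not a flaw.)

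However, the lemma as stated asserts two things: that $|\cdot|_{(s,M,\Omega)}$ is a \emph{norm}, and that it satisfies the equivalence \eqref{equivalence}. You only prove the second. The equivalence by itself does not yield the norm axioms: a two-sided comparable functional need not be absolutely homogeneous or satisfy the triangle inequality. The paper supplies these in the three numbered items at the end of its proof: definiteness (i), homogeneity $|\alpha u|=|\alpha||u|$ (ii), and the triangle inequality (iii), the last by plugging the convex combination $\frac{u+v}{|u|+|v|}=\frac{|u|}{|u|+|v|}\cdot\frac{u}{|u|}+\frac{|v|}{|u|+|v|}\cdot\frac{v}{|v|}$ into the convex modular $\tilde{\rho}$ and invoking Remark \ref{norm}. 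You should add this verification (it is the same standard Luxemburg-norm argument you already invoke implicitly) to make the proof complete.
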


\begin{proof}
We begin by proving \eqref{equivalence}. Evidently, we have
  $$\rho\bigg{(}\frac{u}{|u|_{(s,M,\Omega)}}\bigg{)}\leq\tilde{\rho}\bigg{(}\frac{u(x)}{|u|_{(s,M,\Omega)}}\bigg{)}\leq1\ \ \ \text{and}\ \ \
  \overline{\rho}\bigg{(}\frac{u}{|u|_{(s,M,\Omega)}}\bigg{)}\leq\tilde{\rho}\bigg{(}\frac{u}{|u|_{(s,M,\Omega)}}\bigg{)} \leq1,$$
  then $$\|u\|_{(M,\Omega)}\leq |u|_{(s,M,\Omega)}\ \ \text{and}\ \ [u]_{(s,M,\Omega)}\leq |u|_{(s,M,\Omega)},$$
  therefore
  $$\|u\|_{(s,M,\Omega)}\leq 2|u|_{(s,M,\Omega)}.$$
  For the second inequality of \eqref{equivalence}, we have
  \begin{align*}
    \tilde{\rho}\bigg{(}\frac{u}{2\|u\|_{(s,M,\Omega)}}\bigg{)} & \leq \frac{1}{2}\rho\bigg{(}\frac{u}{\|u\|_{(s,M,\Omega)}}\bigg{)}+
                                                               \frac{1}{2}\overline{\rho}\bigg{(}\frac{u}{\|u\|_{(s,M,\Omega)}}\bigg{)}\\
                                                               &\leq\frac{1}{2}\rho\bigg{(}\frac{u}{\|u\|_{(M,\Omega)}}\bigg{)}+
                                                               \frac{1}{2}\overline{\rho}\bigg{(}\frac{u}{[u]_{(s,M,\Omega)}}\bigg{)}\\
                                                               &\leq \frac{1}{2}+\frac{1}{2}=1,
  \end{align*}
  then  $$|u|_{(s,M,\Omega)}\leq2\|u\|_{(s,M,\Omega)}.$$ This ends the proof of \eqref{equivalence}.\\

   Let now prove that $|.|_{(s,M,\Omega)}$ is a norm in $W^{s,M}(\Omega)$.\\

 \noindent $(\mathbf{i})$ It is clear that, if $|u|_{(s,M,\Omega)}=0$ then $u=0,\ a.e$.\\
  $(\mathbf{ii})$ For $\alpha\in \mathbb{K}$, we have
  \begin{align*}
    |\alpha u|_{(s,M,\Omega)} & =\inf\bigg{\{}\lambda>0\ :\ \tilde{\rho}\bigg{(}\frac{\alpha u}{\lambda}\bigg{)}\leq1\bigg{\}}=
                               |\alpha| \inf\bigg{\{}\lambda>0\ :\ \tilde{\rho}\bigg{(}\frac{u}{\lambda}\bigg{)}\leq1\bigg{\}}\\
                               &=|\alpha| |u|_{(s,M,\Omega)}.
  \end{align*}

\noindent$(\mathbf{iii})$ Finally for the triangle inequality, let $u,v\in W^{s,M}(\Omega)$, we compute
  \begin{align*}
    \tilde{\rho}\bigg{(}\frac{u+v}{|u|_{(s,M,\Omega)}+|v|_{(s,M,\Omega)}}\bigg{)} & =\tilde{\rho}\bigg{(}\frac{|u|_{(s,M,\Omega)}}{|u|_{(s,M,\Omega)}+|v|_{(s,M,\Omega)}}\frac{u}{|u|_{(s,M,\Omega)}}
    +\frac{|v|_{(s,M,\Omega)}}{|u|_{(s,M,\Omega)}+|v|_{(s,M,\Omega)}}\frac{v}{|v|_{(s,M,\Omega)}}\bigg{)} \\ &\leq \frac{|u|_{(s,M,\Omega)}}{|u|_{(s,M,\Omega)}+|v|_{(s,M,\Omega)}} \tilde{\rho}\bigg{(}\frac{u}{|u|_{(s,M,\Omega)}}\bigg{)}+\frac{|v|_{(s,M,\Omega)}}{|u|_{(s,M,\Omega)}+|v|_{(s,M,\Omega)}}
    \tilde{\rho}\bigg{(}\frac{v}{|v|_{(s,M,\Omega)}}\bigg{)}\\&\leq1.
  \end{align*}
  Thus $$|u+v|_{(s,M,\Omega)}\leq |u|_{(s,M,\Omega)}+|v|_{(s,M,\Omega)}.$$ The proof of Lemma \ref{3.5} is completed.
\end{proof}

\begin{lemma}\label{rhotilde}
The following properties hold true:
  \begin{enumerate}
    %\item [(i)] $\tilde{\rho}\bigg{(}\frac{u}{|u|_{(s,M,\Omega)}}\bigg{)}\leq1.$
    \item [(i)] If $|u|_{(s,M,\Omega)}>1$, then $|u|_{(s,M,\Omega)}^{m_{0}}\leq \tilde{\rho}(u)\leq |u|_{(s,M,\Omega)}^{m^{0}}.$
    \item [(ii)] If $|u|_{(s,M,\Omega)}<1$, then  $|u|_{(s,M,\Omega)}^{m^{0}}\leq \tilde{\rho}(u)\leq |u|_{(s,M,\Omega)}^{m_{0}}.$
  \end{enumerate}
\end{lemma}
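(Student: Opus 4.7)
The plan is to lift Lemma \ref{lem2}, which gives the modular-norm scaling for $M$, from the single integral $\rho$ to the combined modular $\tilde{\rho}$, and then read off the inequalities directly from the definition of the Luxemburg-type norm $|\cdot|_{(s,M,\Omega)}$.

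First, I would apply Lemma \ref{lem2} pointwise inside both integrals defining $\tilde{\rho}$. Since $\xi_0(\beta)M(t)\leq M(\beta t)\leq \xi_1(\beta)M(t)$ for all $\beta,t\geq 0$, integrating against $dx$ for the first piece of $\tilde\rho$ and against $|x-y|^{-N}dxdy$ (with $t$ replaced by $(u(x)-u(y))/|x-y|^s$) for the second piece yields the scaling identity
$$\xi_0(\beta)\,\tilde{\rho}(u)\leq \tilde{\rho}(\beta u)\leq \xi_1(\beta)\,\tilde{\rho}(u),\qquad \beta\geq 0, \ u\in W^{s,M}(\Omega).$$
This is the only ingredient needed besides the definition of $|u|_{(s,M,\Omega)}$.

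Next, set $\lambda:=|u|_{(s,M,\Omega)}$. For the upper bounds, use Remark \ref{norm}, which provides $\tilde{\rho}(u/\lambda)\leq 1$. Writing $u=\lambda\cdot(u/\lambda)$ and applying the scaling identity with $\beta=\lambda$ gives $\tilde{\rho}(u)\leq \xi_1(\lambda)$. When $\lambda>1$, $\xi_1(\lambda)=\lambda^{m^0}$, producing the upper bound in (i); when $\lambda<1$, $\xi_1(\lambda)=\lambda^{m_0}$, producing the upper bound in (ii).

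For the lower bounds I would use the infimum definition of $|\cdot|_{(s,M,\Omega)}$: for every $\lambda'\in(0,\lambda)$ one has $\tilde{\rho}(u/\lambda')>1$. Applying the scaling identity with $\beta=\lambda'$ yields
$$\tilde{\rho}(u)\geq \xi_0(\lambda')\,\tilde{\rho}(u/\lambda')>\xi_0(\lambda').$$
If $\lambda>1$, restrict to $\lambda'\in(1,\lambda)$ so that $\xi_0(\lambda')=(\lambda')^{m_0}$, and let $\lambda'\nearrow\lambda$ to obtain $\tilde{\rho}(u)\geq \lambda^{m_0}$, which is the lower bound of (i). If $\lambda<1$, any $\lambda'<\lambda<1$ gives $\xi_0(\lambda')=(\lambda')^{m^0}$; letting $\lambda'\nearrow\lambda$ yields $\tilde{\rho}(u)\geq \lambda^{m^0}$, the lower bound in (ii).

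The argument is essentially bookkeeping rather than genuine analysis; the main point requiring attention is simply that for $\beta>1$ the role of $\xi_0,\xi_1$ is exchanged with respect to the case $\beta<1$, so the exponents $m_0$ and $m^0$ swap positions between (i) and (ii). No compactness, density, or regularity issue enters, since both the Remark \ref{norm} bound (via Fatou) and the strict-inequality step from the infimum definition are purely modular and available for every $u\in W^{s,M}(\Omega)$.
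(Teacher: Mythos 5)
Your proposal is correct and follows essentially the same route as the paper: both deduce a scaling inequality $\xi_0(\beta)\tilde\rho(u)\le\tilde\rho(\beta u)\le\xi_1(\beta)\tilde\rho(u)$ from Lemma \ref{lem2} applied termwise to the two modular pieces, obtain the upper bounds from Remark \ref{norm}, and obtain the lower bounds by taking $\sigma\nearrow|u|_{(s,M,\Omega)}$ in the infimum characterization, with the $m_0/m^0$ swap governed by whether the scaling parameter lies above or below $1$. The only cosmetic difference is that you state the lifted modular scaling inequality explicitly as a displayed intermediate step, whereas the paper applies Lemma \ref{lem2} inline.
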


\begin{proof}
 ${\bf (1)}$  Assume that $|u|_{(s,M,\Omega)}>1$, then by Lemma \ref{lem2} and Remark \ref{norm},
  \begin{align*}
    \tilde{\rho}\bigg{(}u\bigg{)} & =
    \int_{\Omega}M\bigg{(}|u|_{(s,M,\Omega)}\frac{u}{|u|_{(s,M,\Omega)}}\bigg{)}dx+
    \int_{\Omega}\int_{\Omega}M\bigg{(}|u|_{(s,M,\Omega)}\frac{h_{u}(x,y)}{|u|_{(s,M,\Omega)}}\bigg{)}\frac{dxdy}{|x-y|^{N}}\\
    &\leq |u|_{(s,M,\Omega)}^{m^{0}}\bigg{[}\int_{\Omega}M\bigg{(}\frac{u}{|u|_{(s,M,\Omega)}}\bigg{)}dx+
    \int_{\Omega}\int_{\Omega}M\bigg{(}\frac{h_{u}(x,y)}{|u|_{(s,M,\Omega)}}\bigg{)}\frac{dxdy}{|x-y|^{N}}\bigg{]}\\
    &= |u|_{(s,M,\Omega)}^{m^{0}}\tilde{\rho}\bigg{(}\frac{u}{|u|_{(s,M,\Omega)}}\bigg{)}\leq|u|_{(s,M,\Omega)}^{m^{0}},
  \end{align*}
  where $h_{u}(x,y)=\displaystyle\frac{u(x)-u(y)}{|x-y|^{s}}.$\\

  Let $1<\sigma<|u|_{(s,M,\Omega)}$, by definition of the norm $|.|_{(s,M,\Omega)}$,
  $$\tilde{\rho}\bigg{(}\frac{u}{\sigma}\bigg{)}>1, $$
  then, according to Lemma \ref{lem2}, we infer
  \begin{align*}
    \tilde{\rho}(u)&\geq \sigma^{m_{0}}\bigg{[}\int_{\Omega}M\bigg{(}\frac{u}{\sigma}\bigg{)}dx+
    \int_{\Omega}\int_{\Omega}M\bigg{(}\frac{h_{u}(x,y)}{\sigma}\bigg{)}dxdy\bigg{]}\\
    &= \sigma^{m_{0}}\tilde{\rho}\bigg{(}\frac{u}{\sigma}\bigg{)}\geq\sigma^{m_{0}},
  \end{align*}
  letting $\sigma\nearrow|u|_{(s,M,\Omega)}$ in the above inequality, we obtain $(i)$.\\

  ${\bf (2)}$ Assume that $|u|_{(s,M,\Omega)}<1$. Using Lemma \ref{lem2}, we get
 % $$M(t)\leq \tau^{m_{0}}M\bigg{(}\frac{t}{\tau}\bigg{)}\ \text{for all}\ t>0,\ 0<\tau<1.$$
  \begin{align*}
     \tilde{\rho}(u) & =\tilde{\rho}\bigg{(}|u|_{(s,M,\Omega)}\frac{u}{|u|_{(s,M,\Omega)}}\bigg{)}
      \leq |u|_{(s,M,\Omega)}^{m_{0}}\tilde{\rho}\bigg{(}\frac{u}{|u|_{(s,M,\Omega)}}\bigg{)}
      \leq  |u|_{(s,M,\Omega)}^{m_{0}}.
  \end{align*}
  %By similar techniques as above, we get
  %$$M(t)\geq \tau^{m^{0}}M\bigg{(}\frac{t}{\tau}\bigg{)}\ \text{for all}\ t>0,\ 0<\tau<1.$$
 On the other hand, as above in $(1)$,
  let $0<\sigma<|u|_{(s,M,\Omega)}$, by Lemma \ref{lem2},
  $$\tilde{\rho}(u)\geq\sigma^{m^{0}} \tilde{\rho}\bigg{(}\frac{u}{\sigma}\bigg{)}\geq \sigma^{m^{0}}.$$
Letting $\sigma\nearrow|u|_{(s,M,\Omega)}$ in the last inequality, we obtain $(ii)$.
\end{proof}

\begin{proof}[{\bf Proof of Theorem \ref{allR}}]
  Let $u\in W^{s,M}(\mathbb{R}^{d})$ such that $|u|_{(s,M,\mathbb{R}^{d})}=1$. By Lemma \eqref{rhotilde}, $\tilde{\rho}(u)=1$. Let $B_{i}=\{x\in\mathbb{R}^{d}/\ i\leq|x|<i+1\}$, $i\in\mathbb{N}$, such that $\mathbb{R}^{d}= \bigcup_{i\in\mathbb{N}}B_{i}$ and $B_{i}\cap B_{j}\neq\emptyset$ if $i\neq j$.
 Then, for all $i\in\mathbb{N}$, we have
  \begin{align}\label{tilder}
 \int_{B_{i}}M(u(x))dx+ \int_{B_{i}} \int_{B_{i}}M\bigg{(}\frac{u(x)-u(y)}{|x-y|^{s}M^{-1}(|x-y|^{N})}\bigg{)}dxdy & \leq\tilde{\rho}(u)=1.
   \end{align}
    %\sum_{i\in\mathbb{N}}\bigg{(} \int_{B_{i}}M(u(x))dx+ \int_{B_{i}} \int_{B_{i}}M\bigg{(}
    %\frac{u(x)-u(y)}{|x-y|^{s}M^{-1}(|x-y|^{N})}\bigg{)}dxdy\bigg{)}&= \\
     %\int_{\mathbb{R}^{d}}M(u(x))dx+\int_{\mathbb{R}^{d}}\int_{\mathbb{R}^{d}}M\bigg{(}\frac{u(x)-u(y)}{|x-y|^{s}M^{-1}(|x-y|^{N})}\bigg{)}dxdy=
     %\tilde{\rho}(u)=1.

 In what follows we show that $$C=\sup\bigg{\{}\int_{\mathbb{R}^{d}}M_{*}(w(x))dx:\ w\in W^{s,M}(\mathbb{R}^{d}),|w|_{(s,M,\mathbb{R}^{d})}=1\bigg{\}}<\infty.$$
Indeed, in view of \eqref{7} and \eqref{equivalence}, there exists $C_{0}>$ such that  $$\|u\|_{(M_{*},B_{i})}\leq C_{0}\|u\|_{(s,M,B_{i})}\leq2C_{0}|u|_{(s,M,B_{i})}\leq 2C_{0}|u|_{(s,M,{\mathbb{R}^{d}})}=2C_{0},\ \text{for all}\ i\in\mathbb{N}.$$
Let $i\in\mathbb{N}$, we distingue two cases:\\

  {\bf Cas 1:} If $1\leq\|u\|_{(M_{*},B_{i})}\leq2C_{0}$, then, by Lemma \ref{lem2}, \eqref{7}, \eqref{equivalence}, Lemma \ref{rhotilde} and  \eqref{tilder}, we have
  \begin{align}\label{en1}
 \int_{B_{i}}M_{*}(u(x))dx& \leq \|u\|_{(M_{*},B_{i})}^{(m^{0})^{*}}\leq C_{0}^{(m^{0})^{*}}\|u\|_{(s,M,B_{i})}^{(m^{0})^{*}}\ \ \big{(}(m^{0})^{*}\ \text{fixed by Lemma \ref{M***}}\big{)}\nonumber\\
       &\leq (2C_{0})^{(m^{0})^{*}}|u|_{(s,M,B_{i})}^{(m^{0})^{*}}\nonumber\\
       &\leq (2C_{0})^{(m^{0})^{*}}\bigg{(} \int_{B_{i}}M(u(x))dx+ \int_{B_{i}} \int_{B_{i}}M(h_{u}(x,y))dxdy \bigg{)}^{\frac{(m^{0})^{*}}{m^{0}}}\nonumber\\
       &\leq (2C_{0})^{(m^{0})^{*}}\bigg{(} \int_{B_{i}}M(u(x))dx+ \int_{B_{i}} \int_{B_{i}}M(h_{u}(x,y))dxdy \bigg{)}.
  \end{align}

 {\bf Cas 2:} If $\|u\|_{(M_{*},B_{i})}<1$, then, also by Lemma \ref{lem2}, \eqref{7}, \eqref{equivalence}, Lemma \ref{rhotilde} and \eqref{tilder}, we have

 \begin{align}\label{en2}
 \int_{B_{i}}M_{*}(u(x))dx& \leq \|u\|_{(M_{*},B_{i})}^{m_{0}^{*}}\leq C_{0}^{m_{0}^{*}}\|u\|_{(s,M,B_{i})}^{m_{0}^{*}}\ \ \big{(}m_{0}^{*}\ \text{fixed by Lemma \ref{M***}}\big{)}\nonumber\\
       &\leq (2C_{0})^{m_{0}^{*}}|u|_{(s,M,B_{i})}^{m_{0}^{*}}\nonumber\\
       &\leq (2C_{0})^{m_{0}^{*}}\bigg{(} \int_{B_{i}}M(u(x))dx+ \int_{B_{i}} \int_{B_{i}}M(h_{u}(x,y))dxdy \bigg{)}^{\frac{m_{0}^{*}}{m^{0}}}\nonumber\\
       &\leq (2C_{0})^{m_{0}^{*}}\bigg{(} \int_{B_{i}}M(u(x))dx+ \int_{B_{i}} \int_{B_{i}}M(h_{u}(x,y))dxdy \bigg{)}.
  \end{align}
  Combining \eqref{en1} and \eqref{en2}, we obtain \begin{align*}
         \int_{\mathbb{R}^{d}}M_{*}(u(x))dx & =\sum_{i\in\mathbb{N}}\int_{B_{i}}M_{*}(u(x))dx\\
                                            &\leq[(2C_{0})^{(m^{0})^{*}}+(2C_{0})^{m_{0}^{*}}]\sum_{i\in\mathbb{N}} \bigg{(} \int_{B_{i}}M(u(x))dx+ \int_{B_{i}} \int_{B_{i}}M(h_{u}(x,y))dxdy \bigg{)}\\
                                            &= [(2C_{0})^{(m^{0})^{*}}+(2C_{0})^{m_{0}^{*}}]\tilde{\rho}(u)=(2C_{0})^{(m^{0})^{*}}+(2C_{0})^{m_{0}^{*}}.
       \end{align*}
      Hence $C<(2C_{0})^{(m^{0})^{*}}+(2C_{0})^{m_{0}^{*}}.$\\

        Now, let $u\in W^{s,M}(\mathbb{R}^{d})\setminus\{0\}$ and $v=\displaystyle\frac{u}{|u|_{(s,M,\mathbb{R}^{d})}}$. Then, by using Proposition \ref{KML}, we infer $$\|v\|_{(M_{*})}\leq \int_{\mathbb{R}^{d}}M_{*}(v(x))dx+1\leq C+1,$$
  from where it follows that $$\|u\|_{(M_{*})}\leq (C+1)|u|_{(s,M,\mathbb{R}^{d})},$$
  and for all $N$-function $B\prec\prec M_{*}$, we have $$W^{s,M}(\mathbb{R}^{d})\hookrightarrow L^{M_{*}}(\mathbb{R}^{d})\hookrightarrow L^{B}(\mathbb{R}^{d}).$$
  The proof of Theorem \ref{allR} is completed.
\end{proof}

\section{Variational setting of problem \eqref{eq1} and some useful tools}

In this section, we will first introduce the variational setting for problem \eqref{eq1}. In view of the presence of potential $V$, our working space is $$E=\bigg{\{}u\in W^{s,M}(\mathbb{R}^{d});\ \int_{\mathbb{R}^{d}}V(x)M(u)dx<\infty\bigg{\}},$$
equipped with the following norm $$\|u\|= [u]_{(s,M)}+\|u\|_{(V,M)},$$ where $$\|u\|_{(V,M)}=\inf\bigg{\{}\lambda>0;\ \int_{\mathbb{R}^{d}}V(x)M\bigg{(}\frac{u(x)}{\lambda}\bigg{)}dx\leq1\bigg{\}}.$$
%if there is no confusion we shall write $\|.\|_{(V,M)}$ instead of $\|u\|_{(V,M,\Omega)}$.\\
We define the functional $G: E\rightarrow \mathbb{R}$ by
\begin{equation}\label{F}
G(u)=\int_{\mathbb{R}^{d}}\int_{\mathbb{R}^{d}}M(h_{u}(x,y))d\mu,
\end{equation}
where $h_{u}(x,y)=\displaystyle\frac{u(x)-u(y)}{|x-y|^{s}}$ and $d\mu=\displaystyle\frac{dxdy}{|x-y|^{N}}$.\\

For any $(x,t)\in \mathbb{R}^{d}\times\mathbb{R}$, denote by
\begin{equation}\label{F}
  F(x,t)=\int_{0}^{t}f(x,s)ds=\xi(x)|t|^{p}.
\end{equation}

We consider the following family of functionals on $E$
$$I_{\lambda}(u)=A(u)-\lambda B(u),\ \lambda\in[1,2],$$
where $$A(u)=G(u)+\Psi(u),$$

 $$\Psi(u)=\int_{\mathbb{R}^{d}}V(x)M(u)dx,\ \ \ \ B(u)=\int_{\mathbb{R}^{d}}F(x,u)dx.$$

\begin{lemma}\label{deff}
 The functional $I_{\lambda}$ is well defined on $E$, moreover $I_{\lambda}\in C^{1}(E,\mathbb{R})$ and for all $v\in E$,\begin{align}\label{I'}
 \langle I_{\lambda}^{'}(u),v\rangle&=\langle A^{'}(u),v\rangle-\lambda\langle B^{'}(u),v\rangle\nonumber\\
                                     &=\langle(-\triangle)^{s}_{m}u,v\rangle+\int_{\mathbb{R}^{d}}V(x)m(u)vdx-
                                     \lambda\int_{\mathbb{R}^{d}}f(x,u)vdx.
\end{align}
\end{lemma}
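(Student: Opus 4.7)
The plan is to verify the three claims in sequence: that each summand of $I_{\lambda}$ takes finite values on $E$, that $I_{\lambda}$ is Gateaux differentiable with the prescribed formula, and that this derivative is norm-continuous.

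For well-definedness, the Gagliardo term $G(u)$ is finite whenever $u\in W^{s,M}(\mathbb{R}^{d})$ since $\int\!\!\int M(h_{u})\,d\mu\leq \xi_{1}([u]_{(s,M)})$ by Lemma \ref{lem2}, and $\Psi(u)$ is finite by the very definition of $E$. The delicate piece is $B(u)=\int \xi(x)|u|^{p}dx$. By H\"older with conjugate exponents $\mu/(\mu-p)$ and $\mu/p$,
$$B(u)\leq \|\xi\|_{L^{\mu/(\mu-p)}(\mathbb{R}^{d})}\|u\|_{L^{\mu}(\mathbb{R}^{d})}^{p},$$
so it suffices to have the continuous embedding $E\hookrightarrow L^{\mu}(\mathbb{R}^{d})$. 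This embedding follows by combining condition $(M_{1})$ with the growth control from Lemma \ref{lem2} (using $\mu<m_{0}\leq tm(t)/M(t)$, which yields $|t|^{m_{0}}\lesssim M(t)$ for $|t|\geq 1$), together with $V(x)\geq V_{0}>0$ from $(V_{1})$ and the measure-decay estimate $(V_{2})$ to handle the sublevel sets of $V$.

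Once integrability is settled, I would compute each Gateaux derivative by differentiating inside the integral. For $\Psi$ and $B$ the difference quotients converge pointwise to $V(x)m(u)v$ and $f(x,u)v$ respectively, and Young's inequality combined with the $\Delta_{2}$-condition on $M$ and $\overline{M}$ (forced by $(m_{1})$) furnishes a $t$-uniform integrable majorant, so dominated convergence applies. For $G$, the difference quotient of $M(h_{u+tv})$ converges $d\mu$-a.e.\ to $m(h_{u})h_{v}$; a dominating function is produced by applying Young's inequality in the pair $(h_{u+tv},h_{v})$, noting that $m(h_{u+tv})\in L^{\overline{M}}(\mathbb{R}^{d}\times\mathbb{R}^{d},d\mu)$ uniformly in small $t$ by $\Delta_{2}$-regularity and the convexity of $M$. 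The representation formula \eqref{18} then identifies the resulting integral with the $\langle(-\triangle)^{s}_{m}u,v\rangle$ pairing, delivering the announced expression for $\langle I_{\lambda}'(u),v\rangle$.

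For the $C^{1}$ conclusion I would show that $A'$ and $B'$ are norm-continuous from $E$ into $E^{*}$. Given $u_{n}\to u$ in $E$, extract a subsequence converging a.e.\ and apply a Nemytskii/Vainberg-type argument: the compositions $m(u_{n})$, $m(h_{u_{n}})$, and $|u_{n}|^{p-2}u_{n}$ are bounded and equi-integrable in $L^{\overline{M}}(V\,dx)$, $L^{\overline{M}}(d\mu)$, and $L^{\mu/(\mu-1)}(\xi^{\mu/(\mu-p)}dx)$ respectively, thanks again to $\Delta_{2}$-regularity; duality then yields convergence of $A'(u_{n})$ and $B'(u_{n})$ in $E^{*}$. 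The principal obstacle is arranging uniform integrability on the product space $\mathbb{R}^{d}\times\mathbb{R}^{d}$ against the singular kernel $d\mu$; this is precisely where condition $(m_{1})$ is indispensable, as it forces both $M$ and $\overline{M}$ to satisfy $\Delta_{2}$ and guarantees that the modular $\overline{\rho}$ controls the relevant Orlicz norms.
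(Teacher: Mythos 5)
The paper's own proof of this lemma is a one-line citation to [Zhang, Proposition 2.2] (for the $\Psi$ and $B$ terms) and [Salort, Proposition 3.3] (for the nonlocal modular $G$); it gives none of the details. Your proposal reconstructs exactly the argument those references carry out: pointwise differentiation of the modulars, a Young-inequality/$\Delta_{2}$ majorant for the difference quotients so that dominated convergence applies, and a Nemytskii-type continuity argument for $A'$ and $B'$. So in mathematical substance you are on the same route as the paper, just making it self-contained.

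Two small points worth flagging. First, the finiteness of $B(u)$ hinges on $E\hookrightarrow L^{\mu}(\mathbb{R}^{d})$, which in the paper is Corollary \ref{cor1} (deduced from Lemma \ref{lem1}) and formally appears \emph{after} Lemma \ref{deff}; there is no circularity since Lemma \ref{lem1} does not use Lemma \ref{deff}, but if you write this up you should either cite the embedding forward or move Lemma \ref{lem1}/Corollary \ref{cor1} ahead of the variational setting. Second, your one-sentence justification of the uniform $L^{\overline{M}}(d\mu)$-bound on $m(h_{u+tv})$ (for $|t|\leq 1$) is the genuinely technical step: one needs $h_{u+tv}=h_{u}+t\,h_{v}$, monotonicity and convexity of $m$, the identity $\overline{M}(m(\tau))\leq m^{0}M(\tau)$ coming from the equality case of Young's inequality together with $(m_{1})$, and $\Delta_{2}$ on both $M$ and $\overline{M}$ to absorb the constants; these ingredients do close the estimate, but they should be written out rather than asserted. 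With those details supplied, your proof is correct and essentially matches the content of the cited propositions.
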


\begin{proof}
   The proof follows from [\cite{Zhang}, Proposition $2.2$] and [\cite{Salort}, Proposition $3.3$].
\end{proof}

Now we give the definition of weak solution for the problem \eqref{eq1}.

\begin{definition}
  We say that $u\in E$ is a weak solution to $\eqref{eq1}$ if $u$ is critical point of $I_{1}$, which means that $u$ satisfies $$\langle I_{1}^{'}(u),v\rangle=\langle A^{'}(u),v\rangle-\langle B^{'}(u),v\rangle=0$$ for all $v\in E$.
\end{definition}

\begin{lemma}\label{lem3} Assume that $(m_{1})$ and $(V_{1})$ are satisfied. Then,
  the following properties hold true:\\

    \noindent$(i)$ $\xi_{0}([u]_{(s,M)})\leq G(u)\leq\xi_{1}([u]_{(s,M)})\ \text{for all}\ u\in E,$\\

    \noindent$(ii)$ $\xi_{0}(\|u\|_{(V,M)})\leq \displaystyle\int_{\mathbb{R}^{d}}V(x)M(u)dx\leq \xi_{1}(\|u\|_{(V,M)})\ \text{for all}\ u\in E,$
\end{lemma}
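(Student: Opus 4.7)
The plan is to run, twice, the same two-sided comparison argument that underlies the second assertion of Lemma \ref{lem2}, using only the pointwise estimate
\[
\xi_{0}(\beta)\,M(t)\le M(\beta t)\le\xi_{1}(\beta)\,M(t)
\]
(already supplied by Lemma \ref{lem2}) together with the defining property of the relevant Luxemburg-type norm. In fact the statement is structurally identical to the bounds already proved for $\tilde{\rho}$ in Lemma \ref{rhotilde}; the only change is the underlying modular. Thus the proof is a direct transcription, and I expect the core calculation to be essentially a one-liner in each case.

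For (i), let $\lambda=[u]_{(s,M)}$ and write $h_{u}=\lambda\cdot(h_{u}/\lambda)$. The pointwise estimate with $\beta=\lambda$ gives $M(h_{u})\le\xi_{1}(\lambda)\,M(h_{u}/\lambda)$ and $M(h_{u})\ge\xi_{0}(\lambda)\,M(h_{u}/\lambda)$ almost everywhere with respect to $d\mu$. Integrating against $d\mu$ yields
\[
\xi_{0}(\lambda)\!\int\!\!\int M\!\left(\tfrac{h_{u}}{\lambda}\right)d\mu\;\le\;G(u)\;\le\;\xi_{1}(\lambda)\!\int\!\!\int M\!\left(\tfrac{h_{u}}{\lambda}\right)d\mu .
\]
For the upper bound I would invoke the same Fatou argument used in Remark \ref{norm} to conclude that $\int\!\!\int M(h_{u}/\lambda)\,d\mu\le 1$, which gives $G(u)\le\xi_{1}(\lambda)$. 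For the lower bound I would pick $0<\sigma<\lambda$, so that by definition of the Gagliardo infimum $\int\!\!\int M(h_{u}/\sigma)\,d\mu>1$, then apply the pointwise estimate with $\beta=\sigma$ to get $G(u)\ge\xi_{0}(\sigma)$, and finally let $\sigma\nearrow\lambda$ and use continuity of $\xi_{0}$. This is exactly the two-case argument carried out for $\tilde{\rho}$ in Lemma \ref{rhotilde}, simply with $\rho$ replaced by $\overline{\rho}$.

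For (ii), the argument is verbatim the same with the double integral against $d\mu$ replaced by $\int_{\mathbb{R}^{d}}V(x)\,(\cdot)\,dx$ and $[u]_{(s,M)}$ replaced by $\|u\|_{(V,M)}$. Hypothesis $(V_{1})$ ensures $V(x)\ge V_{0}>0$, so $V(x)\,dx$ is a well-defined finite-on-compacts measure and $\|\cdot\|_{(V,M)}$ is a bona fide Luxemburg norm on the weighted Orlicz space $\{u:\int V(x)M(u)\,dx<\infty\}$; consequently the same two-sided pointwise estimate, weighted by $V(x)$ and integrated, delivers
\[
\xi_{0}(\|u\|_{(V,M)})\;\le\;\int_{\mathbb{R}^{d}}V(x)M(u)\,dx\;\le\;\xi_{1}(\|u\|_{(V,M)}),
\]
again by splitting into the cases $\int V(x)M(u/\|u\|_{(V,M)})\,dx\le 1$ (for the upper bound) and $\sigma\nearrow\|u\|_{(V,M)}$ (for the lower bound).

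The only step that is not purely formal is the assertion that the modular evaluated at the Luxemburg norm is $\le 1$; that is handled by Fatou's lemma exactly as in Remark \ref{norm}, and is the lone ingredient that needs the $\Delta_{2}$-type regularity (which is built into $(m_{1})$). No new ideas beyond Lemma \ref{lem2} and the proof scheme of Lemma \ref{rhotilde} are required.
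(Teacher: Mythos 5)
Your argument is correct and is essentially the paper's: for part (ii) the paper runs exactly the two-sided estimate of Lemma \ref{lem2} with the $\epsilon$ (equivalently your $\sigma\nearrow\lambda$) perturbation for the lower bound, and for part (i) it simply cites [\cite{Sabri}, Lemma 3.4], whose proof is the same mechanism applied to the Gagliardo modular, so your spelled-out version is a fine substitute. One small correction to a side remark: the Fatou step ensuring the modular evaluated at the Luxemburg norm is $\le 1$ does not use any $\Delta_{2}$-type regularity --- it needs only continuity of $M$ and monotonicity of the modular in $\lambda$ --- so the parenthetical attributing it to $(m_{1})$ is unnecessary (and slightly misleading).
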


\begin{proof}
%Let $d\mu=\frac{dxdy}{|x-y|^{N}}$, $d\mu$ is a regular Borel measure on the set $\Omega\times\Omega$ and the space $L^{M}(d\mu)$ is also a reflexive and separable Banach space. Denote by $\|u\|_{(M,d\mu)}$ the Lexumberg norm defined in $L^{M}(d\mu)$.
 % By Lemma \ref{lem2}, we have $$\xi_{0}(\|u\|_{(M)})\leq \int_{\mathbb{R}^{d}}M(u)dx \leq\xi_{1}(\|u\|_{(M)})\ \text{for all}\ \in L^{M}(\mathbb{R}^{d}).$$ It follows that $$\xi_{0}(\|h_{u}\|_{(M,d\mu)})\leq G(u)\leq\xi_{1}(\|h_{u}\|_{(M,d\mu)})\ \text{for all}\ u\in E.$$
  %Having in mind that $\|h_{u}\|_{(M,d\mu)}=[u]_{(s,M)},$ we obtain
   %$$\xi_{0}([u]_{(s,M)})\leq G(u)\leq\xi_{1}([u]_{(s,M)})\ \text{for all}\ u\in E.$$
   %Thus the assertion $(i)$.
 The proof of the first assertion is given by [\cite{Sabri}, Lemma 3.4]. For the second assertion, let $u\in E$, on one hand, choosing $\beta=\|u\|_{(V,M)}$ in Lemma \ref{lem2}, we obtain
   $$%\xi_{0}(\|u\|_{(V,M)})M\bigg{(}\frac{u}{\|u\|_{(V,M)}}\bigg{)}\leq
    M( u)\leq \xi_{1}(\|u\|_{(V,M)})M\bigg{(}\frac{u}{\|u\|_{(V,M)}}\bigg{)},$$
   then $$%\xi_{0}(\|u\|_{(V,M)})V(t)M\bigg{(}\frac{u}{\|u\|_{(V,M)}}\bigg{)}\leq
   V(x)M( u)\leq \xi_{1}(\|u\|_{(V,M)})V(x)M\bigg{(}\frac{u}{\|u\|_{(V,M)}}\bigg{)},\ \text{for}\ x\in\mathbb{R}^{d}.$$
   From the definition of the norm \eqref{19}, we deduce that

   $$\int_{\mathbb{R}^{d}}V(x)M( u)dx\leq \xi_{1}(\|u\|_{(V,M)})\int_{\mathbb{R}^{d}}V(x)M\bigg{(}\frac{u}{\|u\|_{(V,M)}}\bigg{)}dx\leq \xi_{1}(\|u\|_{(V,M)}).$$
   On the other hand, let $\epsilon>0$, $\beta=\|u\|_{(V,M)}-\epsilon$ in Lemma \ref{lem2}, we get
   $$\xi_{0}(\|u\|_{(V,M)}-\epsilon)V(x)M\bigg{(}\frac{u}{\|u\|_{(V,M)}-\epsilon}\bigg{)}\leq V(x)M( u),$$
   then, $$\int_{\mathbb{R}^{d}}V(x)M( u)dx\geq \xi_{0}(\|u\|_{(V,M)}-\epsilon)\int_{\mathbb{R}^{d}}V(x)M\bigg{(}\frac{u}{\|u\|_{(V,M)}-\epsilon}\bigg{)}dx\geq\xi_{0}(\|u\|_{(V,M)}-\epsilon).$$
   Letting $\epsilon\rightarrow0$ in the above inequality, we obtain $$\xi_{0}(\|u\|_{(V,M)})\leq\int_{\mathbb{R}^{d}}V(x)M( u)dx.$$
   Thus the assertion $(ii)$ and the proof of Lemma \ref{lem3} is complete.
\end{proof}

\begin{lemma}\label{stronger}
   Let $\varphi,\psi:\mathbb{R}\rightarrow\mathbb{R}$ be increasing homeomorphisms such that their associated $N$-functions $\Phi,\Psi$ satisfy %$N$-functions defined by $$\Phi(t)=\int_{0}^{|t|}\varphi(s)ds,\ \ \ \ \ \Psi(t)=\int_{0}^{|t|}\psi(s)ds,$$ satisfying
  %\begin{equation}\label{phi0}
  \begin{equation}\label{m}
    1<\varphi_{0}:=\inf_{t>0}\frac{t\varphi(t)}{\Phi(t)}\leq
    \frac{t\varphi(t)}{\Phi(t)}\leq \varphi^{0}:=\sup_{t>0}\frac{t\varphi(t)}{\Phi(t)}<m_{0},\ \ \ (m_{0}\ \text{fixed by $(m_{1})$}),
  \end{equation}
 and
 \begin{equation}\label{mm}
   1<\frac{t \psi(t)}{\Psi(t)}\leq \psi^{0}:=\sup_{t>0}\frac{t \psi(t)}{\Psi(t)}<\frac{m_{0}}{\varphi^{0}}.
 \end{equation}

  Then $\Psi$ satisfies the $\triangle_{2}$-condition and $$\Psi\circ\Phi\prec\prec M.$$
\end{lemma}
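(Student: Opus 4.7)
The plan is to reduce both assertions to the power-law sandwich estimates of Lemma \ref{lem2}, driven by the quantitative input $\varphi^{0}\psi^{0}<m_{0}$ encoded in hypothesis \eqref{mm}. For the $\Delta_{2}$-condition on $\Psi$, I apply the upper half of Lemma \ref{lem2} to $\Psi$ with $\beta=2$: since \eqref{mm} gives $\sup_{t>0} t\psi(t)/\Psi(t)=\psi^{0}<\infty$, the lemma yields $\Psi(2t)\leq \xi_{1}(2)\Psi(t)=2^{\psi^{0}}\Psi(t)$ for every $t\geq 0$, which is exactly \eqref{delta2} with $K=2^{\psi^{0}}$.

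For the essential dominance $\Psi\circ\Phi\prec\prec M$, I use the equivalent reformulation of Definition \ref{prec} recorded immediately below it, so that it suffices to show $\lim_{t\to+\infty}\Psi(\Phi(kt))/M(t)=0$ for each fixed $k>0$. I chain three applications of Lemma \ref{lem2}. First, the upper estimate for $\Phi$ at $\beta=k$ gives $\Phi(kt)\leq \xi_{1}(k)\Phi(t)$, and applying it again at $\beta=t\geq 1$ gives $\Phi(t)\leq \Phi(1)\,t^{\varphi^{0}}$. Second, since $\Phi$ is an $N$-function, $\Phi(kt)\to\infty$ as $t\to+\infty$, so for all sufficiently large $t$ we have $\Phi(kt)\geq 1$ and may apply the upper estimate for $\Psi$ to obtain $\Psi(\Phi(kt))\leq \Psi(1)\,\Phi(kt)^{\psi^{0}}$. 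Third, by hypothesis $(m_{1})$ and Lemma \ref{lem2} applied to $M$, $M(t)\geq M(1)\,t^{m_{0}}$ for $t\geq 1$. Assembling these bounds gives, for all sufficiently large $t$,
$$\frac{\Psi(\Phi(kt))}{M(t)}\;\leq\; C(k)\, t^{\varphi^{0}\psi^{0}-m_{0}},$$
where $C(k)$ depends only on $k$, the exponents $\varphi^{0},\psi^{0},m_{0}$, and on $\Phi(1),\Psi(1),M(1)$. Hypothesis \eqref{mm} forces $\varphi^{0}\psi^{0}<m_{0}$, so the exponent is strictly negative and the right-hand side tends to $0$.

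There is essentially no serious obstacle: the argument is a mechanical chain of power bounds in which the growth race between $\Psi\circ\Phi$ and $M$ is won by $M$ precisely because $\psi^{0}<m_{0}/\varphi^{0}$. The only minor care required is to invoke each sandwich estimate of Lemma \ref{lem2} in the regime where its argument is at least $1$, so that $\xi_{1}$ selects the larger exponent on the upper side and $\xi_{0}$ the smaller on the lower side; this is automatic in the asymptotic regime $t\to+\infty$ relevant to the essential-dominance claim, and is not needed at all for the $\Delta_{2}$ step, which holds globally.
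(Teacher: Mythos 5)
Your proposal is correct and follows essentially the same route as the paper: both chain the power-law sandwich of Lemma \ref{lem2} through $\Phi$, then $\Psi$, and through $M$ from below, to reduce everything to the exponent inequality $\varphi^{0}\psi^{0}<m_{0}$ guaranteed by \eqref{mm}. The only substantive difference is that you also spell out the (short, $\beta=2$) application of Lemma \ref{lem2} giving $\Psi(2t)\leq 2^{\psi^{0}}\Psi(t)$ to verify the $\triangle_{2}$-condition for $\Psi$ — a step the paper's proof asserts in the statement but does not actually write out.
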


\begin{proof}
  Using Lemma \ref{lem2}, we have, for all $\lambda>0$ and $t>1$,
  $$\Phi(\lambda t)\leq \Phi(\lambda)t^{\varphi^{0}}\ \ \text{and}\ \ M(1)t^{m_{0}}\leq M(t),$$ then
  $$\frac{\Phi(\lambda t)}{M(t)}\leq \frac{\Phi(\lambda)t^{\varphi^{0}}}{M(1)t^{m_{0}}}.$$
  Again with Lemma \ref{lem2}, we get $$\Psi(\Phi(\lambda t))\leq \Psi(\Phi(\lambda)t^{\varphi^{0}})\leq \Psi(\Phi(\lambda ))t^{\varphi^{0}\psi^{0}}.$$
   Since $\varphi^{0}\psi^{0}<m_{0}$, we conclude $$\frac{\Psi(\Phi(\lambda t))}{M(t)}\leq \frac{\Psi(\Phi(\lambda ))t^{\varphi^{0}\psi^{0}}}{M(1)t^{m_{0}}}\rightarrow0,\ \text{as}\ t\rightarrow+\infty.$$ Thus $\Psi\circ\Phi\prec\prec M$ and the proof is completed.
\end{proof}

Now we state our embedding compactness result.
\begin{lemma}\label{lem1}
 Let $\Phi$ be an $N$-function satisfying \eqref{m} such that
 \begin{equation}\label{Mphi}
\Phi\prec\prec M.
 \end{equation}
 Under the assumption $(m_{1})$, $(V_{1})$ and $(V_{2})$, the embedding from $E$ into $L^{\Phi}(\mathbb{R}^{d})$ is compact.
\end{lemma}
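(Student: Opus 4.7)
The plan is to combine the compact embedding on bounded subdomains furnished by Theorem \ref{ceb} with a uniform tail estimate driven by the weighted integrability coming from $(V_1)$--$(V_2)$. Let $(u_n)\subset E$ be a bounded sequence. Since $W^{s,M}(\mathbb{R}^d)$ is reflexive and $E$ embeds continuously into it, I would extract (along a subsequence) a weak limit, $u_n\rightharpoonup u$ in $E$; replacing $u_n$ by $u_n-u$, it suffices to prove strong convergence to $0$ in $L^{\Phi}(\mathbb{R}^d)$ when $u_n\rightharpoonup 0$ in $E$.

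Next, I would fix an exhaustion $(B_R)_{R\in\mathbb{N}}$ of $\mathbb{R}^d$ by balls. Since $\Phi\prec\prec M$ and, by construction of the Sobolev conjugate in \eqref{6} together with Lemma \ref{M***}, one has $M\prec\prec M_{*}$, it follows that $\Phi\prec\prec M_{*}$. Theorem \ref{ceb}(2) applied on each ball then yields that $W^{s,M}(B_R)\hookrightarrow L^{\Phi}(B_R)$ is compact, and a diagonal extraction produces a single subsequence with $u_n\to 0$ in $L^{\Phi}(B_R)$ for every $R\in\mathbb{N}$.

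The core of the proof is the uniform tail estimate
$$
\lim_{R\to\infty}\sup_n\int_{B_R^c}\Phi(|u_n|)\,dx=0.
$$
For a parameter $L>0$ I would split $B_R^c=(B_R^c\cap\{V>L\})\cup(B_R^c\cap\{V\le L\})$. On $\{V>L\}$, Lemma \ref{lem3}(ii) together with the boundedness of $(u_n)$ in $E$ gives $\int_{\mathbb{R}^d}V(x)M(|u_n|)\,dx\le C$, whence
$$
\int_{\{V>L\}}M(|u_n|)\,dx\le\frac{C}{L},
$$
and this smallness must be transferred to $\int_{\{V>L\}}\Phi(|u_n|)\,dx$ using the strict inequality $\varphi^{0}<m_0$ of \eqref{m} and the power-bounds of Lemma \ref{lem2}: for the portion $\{|u_n|\ge t_\delta\}$ one applies $\Phi\prec\prec M$ directly; for the portion $\{|u_n|<t_\delta\}$ one uses $\Phi(t)\le C\,t^{\varphi_0}$ near $0$, combined with the a priori $L^{M_{*}}$-bound on $(u_n)$ provided by Theorem \ref{allR} and H\"older's inequality, which is possible precisely because $\varphi_0<m_0<m_0^{*}$. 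On the second piece $\{V\le L\}\cap B_R^c$, condition $(V_2)$ forces $|\{V\le L\}\cap B_R^c|\to 0$ as $R\to\infty$, and the uniform $L^{M_{*}}$-bound, together with $\Phi\prec\prec M_{*}$, yields (by a de la Vall\'ee--Poussin type argument) the equi-integrability of $\{\Phi(|u_n|)\}$ on sets of small measure, so $\int_{B_R^c\cap\{V\le L\}}\Phi(|u_n|)\,dx\to 0$ as $R\to\infty$, uniformly in $n$.

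Combining the convergence on balls with the uniform tail estimate gives $\int_{\mathbb{R}^d}\Phi(|u_n|)\,dx\to 0$, and since $\Phi$ satisfies the $\triangle_2$-condition (because $\varphi^{0}<\infty$ by \eqref{m}), Proposition \ref{cv} implies $u_n\to 0$ in $L^{\Phi}(\mathbb{R}^d)$. The main obstacle I anticipate is the tail estimate on $\{V>L\}$: the hypothesis $\Phi\prec\prec M$ is only a growth condition at infinity, so converting $L^{M}$-smallness into $L^{\Phi}$-smallness uniformly in $n$ requires handling the small-amplitude region separately, and this is where the strict gap $\varphi^{0}<m_0$ in \eqref{m} is decisive, as it gives enough margin to absorb the small-amplitude contribution via the $L^{M_{*}}$-bound on $(u_n)$.
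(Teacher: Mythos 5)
Your overall strategy matches the paper's: reduce to $u_n\rightharpoonup 0$ and show $u_n\to 0$ in $L^{\Phi}(\mathbb{R}^d)$ by combining local compactness with a tightness estimate off a finite-measure set and an equi-integrability estimate on small-measure sets. This is exactly the Vitali framework the paper uses (its conditions $\mathbf{(a)}$ and $\mathbf{(b)}$), and in both arguments the finite-measure set is $\{V\le L\}$ via $(V_{2})$.

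For the small-measure-set estimate you take a genuinely different route: you note $\Phi\prec\prec M_{*}$ (transitivity through $M$, using Lemma \ref{M***}), combine it with the uniform $L^{M_{*}}$ bound from Theorem \ref{allR}, and appeal to a de la Vall\'{e}e--Poussin criterion. The paper instead constructs an auxiliary $N$-function $\Psi$ with $\Psi\circ\Phi\prec\prec M$ (Lemma \ref{stronger}), applies H\"older with the pair $(\Psi,\overline{\Psi})$, and uses $\|\chi_{B}\|_{L^{\overline{\Psi}}}=\meas(B)\Psi^{-1}(1/\meas(B))\to0$. Both mechanisms produce a uniform superlinear control of $\Phi(u_{n})$; yours works at the $M_{*}$ scale and the paper at the $M$ scale, and either is acceptable.

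The tightness estimate on $\{V>L\}$, however, has a concrete gap. Coercivity gives $\int_{\{V>L\}}M(u_{n})\,dx\le C/L$, and on $\{|u_{n}|\ge t_{\delta}\}$ you correctly transfer smallness using $\Phi\prec\prec M$. But the low-amplitude region $\{|u_{n}|<t_{\delta}\}\cap\{V>L\}$ in general has infinite measure, and on it $\Phi$ is \emph{larger} than $M$: by Lemma \ref{lem2}, for $0<t<1$ one has $\Phi(t)\ge t^{\varphi^{0}}\Phi(1)$ while $M(t)\le t^{m_{0}}M(1)$, and $\varphi^{0}<m_{0}$ gives $\Phi(t)/M(t)\to\infty$ as $t\to0$. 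Consequently the smallness of $\int_{\{V>L\}}M(u_{n})$ does not propagate to $\int_{\{V>L\}}\Phi(u_{n})$ there, and H\"older against the indicator of this set using the $L^{M_{*}}$ bound requires a finite-measure factor that you do not have (and the $L^{M_{*}}$ bound does not decay with $L$ in any case). The mention of ``$\varphi_{0}<m_{0}<m_{0}^{*}$ and H\"older'' does not close this step. The paper handles it by a different device: it invokes a Luxemburg-norm domination $\|u_{n}\|_{(\Phi,A_{L}^{c})}\le C\|u_{n}\|_{(M,A_{L}^{c})}$ (its inequality \eqref{ttt}, quoted from Kufner, Prop.\ 4.17.6) and then estimates the right side via Lemma \ref{lem2} and Lemma \ref{lem3}. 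To make your argument complete you would need to import an estimate of that kind rather than the amplitude split you propose.
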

\begin{proof}
 Let $\Phi$ be an $N$-function satisfying \eqref{m} such that $\Phi\prec\prec M$ and $(v_{n})$ be a bounded sequence in $E$, since $E$ is reflexive, up to subsequence, $v_{n}\rightharpoonup v\ \text{in}\ E.$
  Let $u_{n}=v_{n}-v$, $u_{n}\rightharpoonup 0\ \text{in}\ E$. We have to show that $u_{n}\rightarrow0$ in $L^{\Phi}(\mathbb{R}^{d})$, by Proposition \ref{cv} this means that
  \begin{equation}\label{sab}
   \int_{\mathbb{R}^{d}}\Phi(u_{n})dx\rightarrow0,\ \text{as}\ n\rightarrow+\infty.
  \end{equation}
  According to Vitali's theorem it suffices to show that, the sequence $(\Phi(u_{n}))$ is equi-integrable, which means:

  $$\mathbf{(a)}\ \ \ \ \ \ \ \ \ \ \begin{cases}
               \forall\epsilon>0,\ \exists\ \omega\subset\mathbb{R}^{d}\ \text{measurable with}\ \meas(\omega)<\infty\\
               \text{such that}\ \int_{\mathbb{R}^{d}\setminus\omega}\Phi(u_{n}(x))dx<\epsilon,\ \forall n\in\mathbb{N},
             \end{cases}$$

  $$\mathbf{(b)}\ \ \ \ \ \ \ \ \ \ \begin{cases}
               \forall\epsilon>0,\ \exists\ \delta_{\epsilon}>0\ \text{such that}\ \int_{E}\Phi(u_{n}(x))dx<\epsilon,\\
               \forall n\in\mathbb{N},\ \forall\ E\subset\mathbb{R}^{d},\ E\ \text{measurable and}\ \meas(E)<\delta_{\epsilon}.
             \end{cases}$$

  We do the proof in two steps. We start by checking $\mathbf{(a)}$. Let $L>0$ and $A_{L}=\{x\in \mathbb{R}^{d}:\ V(x)\leq L\}.$\\
By \eqref{Mphi} and [\cite{Kufner}, Proposition $4.17.6$,], there exists $C>0$ such that
\begin{equation}\label{ttt}
  \|u_{n}\|_{(\Phi,A_{L}^{c})}\leq C\|u_{n}\|_{(M,A_{L}^{c})},\ \text{for all}\ n\in \mathbb{N}.
\end{equation}

Since $\Phi$ satisfying \eqref{m}, then by Lemma \ref{lem2} and \eqref{ttt}, we infer
  \begin{align}\label{ARL}
    \int_{A_{L}^{c}}\Phi(u_{n})dx &\leq \xi_{1}(\|u_{n}\|_{(\Phi,A_{L}^{c})})\leq\|u_{n}\|_{(\Phi,A_{L}^{c})}^{\varphi_{0}}+\|u_{n}\|_{(\Phi,A_{L}^{c})}^{\varphi^{0}}\nonumber\\
                                  &\leq (C^{\varphi_{0}}+C^{\varphi^{0}}) \big{(}\|u_{n}\|_{(M,A_{L}^{c})}^{\varphi_{0}}+\|u_{n}\|_{(M,A_{L}^{c})}^{\varphi^{0}}\big{)}.\nonumber\\
   \end{align}
 Applying again Lemma \ref{lem2}, we obtain
 \begin{align}\label{arl}
   \|u_{n}\|_{(M,A_{L}^{c})}^{\varphi_{0}}& \leq2^{\varphi_{0}-1}\bigg{[}\bigg{(}\int_{A_{L}^{c}}M(u_{n})dx\bigg{)}^{\frac{\varphi_{0}}{m^{0}}}+
                                  \bigg{(}\int_{A_{L}^{c}}M(u_{n})dx\bigg{)}^{\frac{\varphi_{0}}{m_{0}}}\bigg{]}.
 \end{align}
 Combining \eqref{ARL}, \eqref{arl} and Lemma \ref{lem3}, we get
   \begin{align*}
        \int_{A_{L}^{c}}\Phi(u_{n})dx&\leq (2^{\varphi_{0}-1}+ 2^{\varphi^{0}-1})(C^{\varphi_{0}}+C^{\varphi^{0}}) \bigg{[}\bigg{(}\int_{A_{L}^{c}}M(u_{n})dx\bigg{)}^{\frac{\varphi_{0}}{m^{0}}}+
                                  \bigg{(}\int_{A_{L}^{c}}M(u_{n})dx\bigg{)}^{\frac{\varphi_{0}}{m_{0}}}\nonumber\\
                                  &+\bigg{(}\int_{A_{L}^{c}}M(u_{n})dx\bigg{)}^{\frac{\varphi^{0}}{m^{0}}}+
                                  \bigg{(}\int_{A_{L}^{c}}M(u_{n})dx\bigg{)}^{\frac{\varphi^{0}}{m_{0}}}\bigg{]}\nonumber\\
                                  &\leq (2^{\varphi_{0}-1}+ 2^{\varphi^{0}-1})(C^{\varphi_{0}}+C^{\varphi^{0}}) \bigg{[}L^{-\frac{\varphi_{0}}{m^{0}}}\bigg{(}\int_{A_{L}^{c}}V(x)M(u_{n})dx\bigg{)}^{\frac{\varphi_{0}}{m^{0}}}+
                                  L^{-\frac{\varphi_{0}}{m_{0}}}\bigg{(}\int_{A_{L}^{c}}V(x)M(u_{n})dx\bigg{)}^{\frac{\varphi_{0}}{m_{0}}}\nonumber\\
                                  &+L^{-\frac{\varphi^{0}}{m^{0}}}\bigg{(}\int_{A_{L}^{c}}V(x)M(u_{n})dx\bigg{)}^{\frac{\varphi^{0}}{m^{0}}}+
                                  L^{-\frac{\varphi^{0}}{m_{0}}}\bigg{(}\int_{A_{L}^{c}}V(x)M(u_{n})dx\bigg{)}^{\frac{\varphi^{0}}{m_{0}}}\bigg{]}\nonumber\\
                                  &\leq (2^{\varphi_{0}-1}+ 2^{\varphi^{0}-1})(C^{\varphi_{0}}+C^{\varphi^{0}})\bigg{(}L^{-\frac{\varphi_{0}}{m^{0}}}
                                  \xi_{1}(\|u_{n}\|_{(V,M)})^{\frac{\varphi_{0}}{m^{0}}}
                                  +L^{-\frac{\varphi_{0}}{m_{0}}}\xi_{1}(\|u_{n}\|_{(V,M)})^{\frac{\varphi_{0}}{m_{0}}}\nonumber\\
                                  &+L^{-\frac{\varphi^{0}}{m^{0}}}\xi_{1}(\|u_{n}\|_{(V,M)})^{\frac{\varphi^{0}}{m^{0}}}
                                  +L^{-\frac{\varphi^{0}}{m_{0}}}\xi_{1}(\|u_{n}\|_{(V,M)})^{\frac{\varphi^{0}}{m_{0}}}\bigg{)}\nonumber\\
  \end{align*}
  this can be made arbitrarily small by choosing $L$ large enough. Thus $(a)$ is verified.\\

   Now we check $\mathbf{(b)}$. Let $B\subset\mathbb{R}^{d}$ measurable subset of $\mathbb{R}^{d}$. Let $\Psi$ be the $N$-function satisfying \eqref{mm} and $\overline{\Psi}$ be the conjugate of $\Psi$. By Lemma \ref{lem2}, $\Psi\circ\Phi\prec M$. In the light of [\cite{Kufner}, Theorem 4.17.6] there exists $K^{'}>0$ such that \begin{equation}\label{326}
                                                    \|u\|_{(\Psi\circ\Phi)}\leq K^{'}\|u\|_{(M)},\ \text{for all}\ u\in L^{M}(\mathbb{R}^{d}).
                                                  \end{equation}

   \textbf{Claim 1:} We claim that $\xi_{1}(\|u_{n}\|_{(M)})\leq \xi_{1}\bigg{(}\displaystyle\frac{1}{V_{0}}\xi_{1}(\|u_{n}\|)+1\bigg{)}$. \\
   Indeed, using \eqref{tman}, Proposition \ref{KML} and Lemma \ref{lem3}, we deduce \begin{align*}
             \xi_{1}(\|u_{n}\|_{(M)}) &\leq \xi_{1}(\|u_{n}\|_{L^{M}(\Omega)})\leq \xi_{1}\bigg{(}\int_{\mathbb{R}^{d}}M(u_{n})dx+1\bigg{)}\\
                                        &\leq \xi_{1}\bigg{(}\frac{1}{V_{0}}\int_{\mathbb{R}^{d}}V(x)M(u_{n})dx+1\bigg{)}\\
                                        &\leq \xi_{1}\bigg{(}\frac{1}{V_{0}}\xi_{1}(\|u_{n}\|_{(V,M)})+1\bigg{)}\\
                                        &\leq \xi_{1}\bigg{(}\displaystyle\frac{1}{V_{0}}\xi_{1}(\|u_{n}\|)+1\bigg{)},
           \end{align*}
           thus the claim.\\

 Combining \eqref{326}, claim $1$, Lemma \ref{lem2} and applying the H\"{o}lder inequality, we infer that
  \begin{align}\label{o5}
     \int_{B}\Phi(u_{n})dx &\leq \|\Phi(u_{n})\|_{L^{\Psi}(\mathbb{R}^{d})}\|\chi_{B}\|_{L^{\overline{\Psi}}(\mathbb{R}^{d})}\nonumber\\
                            &\leq  \bigg{(}\int_{\mathbb{R}^{d}}\Psi(\Phi(u_{n}))dx+1\bigg{)}\|\chi_{B}\|_{L^{\overline{\Psi}}(\mathbb{R}^{d})}\nonumber\\
                            &\leq \big{(} \xi_{1}(\|u_{n}\|_{(\Psi\circ\Phi)})+1\big{)}\|\chi_{B}\|_{L^{\overline{\Psi}}(\mathbb{R}^{d})}\nonumber\\
                             &\leq \big{(}K^{''}\xi_{1}(\|u_{n}\|_{(M)})+1\big{)}\|\chi_{B}\|_{L^{\overline{\Psi}}(\mathbb{R}^{d})}\nonumber\\
                                 &\leq \bigg{(}K^{''}\xi_{1}\bigg{[}\frac{1}{V_{0}}\xi_{1}(\|u_{n}\|)+1\bigg{]}+1\bigg{)}
                                 \|\chi_{B}\|_{L^{\overline{\Psi}}(\mathbb{R}^{d})}\nonumber\\
                                 &\leq C \|\chi_{B}\|_{L^{\overline{\Psi}}(\mathbb{R}^{d})}
  \end{align}
 where $C=K^{''}\xi_{1}\bigg{[}\frac{1}{V_{0}}\xi_{1}(T)+1\bigg{]}+1$, $K^{''}>0$ and $T=\sup_{n}\|u_{n}\|<\infty$.\\

  On the other hand, the following limit holds
                                                $$\lim_{t\rightarrow+\infty}\frac{\Psi^{-1}(t)}{t}=0, $$

        then, for all $\epsilon>0$, there exists $\delta_{\epsilon}>0$ such that if $|t|\leq \delta_{\epsilon}$, we have
        \begin{equation}\label{BRL1}
        t\Psi^{-1}\bigg{(}\frac{1}{t}\bigg{)}\leq\frac{\epsilon}{C}.
        \end{equation}
    By [\cite{Kufner}, Proposition $4.6.9$], we know that
    \begin{equation}\label{sab}
      \|\chi_{B}\|_{L^{\overline{\Psi}}(\mathbb{R}^{d})}=\meas(B)\Psi^{-1}\bigg{(}\frac{1}{\meas(B)}\bigg{)}.
    \end{equation}

    If $\meas(B)\leq\delta_{\epsilon}$, combining \eqref{BRL1} and \eqref{sab}, we get
    $$\meas(B)\Psi^{-1}\bigg{(}\frac{1}{\meas(B)}\bigg{)}\leq \frac{\epsilon}{C}.$$
     Therefore, for all measurable subset of $\mathbb{R}^{d}$ such that $\meas(B)\leq\delta_{\epsilon}$, $$\int_{B}\Phi(u_{n})dx\leq \epsilon.$$

   % we conclude that  $\epsilon_{R}\rightarrow0$ as $R\rightarrow\infty$
   % and the claim is proved. Thus we may make the term \eqref{o5} small by choosing $R$ large.\\

    We conclude that $(\Phi(u_{n}))$ is uniformly integrable and tight over $\mathbb{R}^{d}$. Thus the Lemma \ref{lem1} is proved.
\end{proof}

\begin{corollary}\label{cor1}
  Under $(m_{1})$ and $(M_{1})$, the embedding from $E$ into $L^{\mu}(\mathbb{R}^{d})$ is compact.
\end{corollary}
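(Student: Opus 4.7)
The plan is to deduce this corollary as a direct consequence of Lemma \ref{lem1} applied to the specific $N$-function $\Phi(t)=|t|^{\mu}$, where $\mu$ is the exponent provided by hypothesis $(M_{1})$. The conditions $(V_{1})$ and $(V_{2})$ needed to invoke Lemma \ref{lem1} are understood to remain in force throughout this section, since they are what defines $E$ as a well-behaved space in the first place.

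First, I would verify that $\Phi(t)=|t|^{\mu}$ is indeed an $N$-function: since $(M_{1})$ forces $1<\mu<m_{0}$ and in particular $\mu>1$, the function $\Phi$ is even, continuous, convex, with $\Phi(t)/t\to 0$ as $t\to 0$ and $\Phi(t)/t\to\infty$ as $t\to\infty$. Writing $\varphi(t)=\Phi'(t)=\mu|t|^{\mu-2}t$, one computes
\[
\frac{t\varphi(t)}{\Phi(t)}=\mu\qquad\text{for every }t>0,
\]
so $\varphi_{0}=\varphi^{0}=\mu$, and the inequality chain \eqref{m} in Lemma \ref{lem1} becomes $1<\mu<m_{0}$, which is precisely what $(M_{1})$ guarantees.

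Next, I would check that $\Phi\prec\prec M$ in the sense of Definition \ref{prec}. For any fixed $k>0$, homogeneity gives
\[
\frac{\Phi(kt)}{M(t)}=k^{\mu}\,\frac{|t|^{\mu}}{M(t)},
\]
and the right-hand side tends to $0$ as $|t|\to\infty$ thanks to the limit in $(M_{1})$. By the characterization of $\prec\prec$ recalled after Definition \ref{prec}, this yields $\Phi\prec\prec M$.

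With both hypotheses of Lemma \ref{lem1} verified for $\Phi(t)=|t|^{\mu}$, the embedding $E\hookrightarrow L^{\Phi}(\mathbb{R}^{d})=L^{\mu}(\mathbb{R}^{d})$ is compact, which is the claim. No real obstacle is anticipated: the work has already been done in Lemma \ref{lem1}, and the corollary is essentially a dictionary entry translating the growth condition $(M_{1})$ into the abstract framework used there.
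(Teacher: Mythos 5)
Your proposal is correct and takes exactly the same route as the paper: choose $\Phi(t)=|t|^{\mu}$, observe that $(M_{1})$ gives both the growth bound $1<\mu<m_{0}$ (so \eqref{m} holds with $\varphi_{0}=\varphi^{0}=\mu$) and $\Phi\prec\prec M$, and then invoke Lemma \ref{lem1}. The paper states this in one line without the explicit verifications, and your remark that $(V_{1})$--$(V_{2})$ are implicitly assumed is a fair reading of the corollary's abbreviated hypothesis list.
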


\begin{proof}
Let $\Phi(t)=|t|^{\mu}$. By condition $(M_{1})$, $\Phi\prec\prec M$.
Applying Lemma \ref{lem1}, we deduce that $E$ is compactly embedded into $L^{\mu}(\mathbb{R}^{d})$.
\end{proof}

%\begin{lemma}\label{sci}
%	The functional $G$ is weakly lower semi-continuous on $E$.
%\end{lemma}

%\begin{proof}
%The proof is given by Lemma 3.3 in \cite{Sabri}.
%	By [Corollary $III.8$, \cite{8}], it is enough to show that $G$ is lower semi-continuous. Let $u\in E$ and $\epsilon>0$, Since $G$ is convex, we deduce %that for any $v\in E$ the following inequality holds
%	$$G(v)\geq G(u)+\langle G^{'}(u),v-u\rangle.$$
%	Using H\"{o}lder inequality we have
%	\begin{align*}
%	G(v)&\geq G(u)-\langle G^{'}(u),u-v\rangle\\
%	&= G(u)- \int_{\mathbb{R}^{d}}\int_{\mathbb{R}^{d}}m(h_{u})h_{u-v}\frac{dxdy}{|x-y|^{N}}\\
%	&= G(u)- \int_{\mathbb{R}^{d}}\int_{\mathbb{R}^{d}}m(h_{u})h_{u-v}d\mu\\
%	&\geq G(u)-2\|m(h_{u})\|_{(\overline{M},d\mu)}\|h_{u-v}\|_{(M,d\mu)}\\
 %   &=G(u)-2\|m(h_{u})\|_{(\overline{M},d\mu)}[h_{u-v}]_{(M)}\\
%	&\geq G(u)-2\|m(h_{u})\|_{(\overline{M},d\mu)}\|u-v\|\\
%	&= G(u)-C\|u-v\|
%	\geq G(u)-\epsilon
%	\end{align*}
%	for all $v\in E$ with $\|u-v\|<\delta=\displaystyle\frac{\epsilon}{C}$, where $C$ is positive constant and $h_{u}:=\frac{u(x)-u(y)}{|x-y|^{s}}$. We %conclude that $G$ is weakly lower semi-continuous.
%\end{proof}

\begin{lemma}\label{lem5}
Assume that $(m_{1})$ and $(M_{1})$ are satisfied. Then the functional $A$ is weakly lower semi-continuous on $E$.
\end{lemma}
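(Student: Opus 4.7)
The approach is to split $A = G + \Psi$ and show that each summand is weakly lower semi-continuous on $E$. The key tools are the continuity and non-negativity of $M$, together with the compact embedding $E \hookrightarrow L^{\mu}(\mathbb{R}^d)$ provided by Corollary \ref{cor1}, which will deliver a.e.\ convergence along subsequences. No refinement of Orlicz space theory is required beyond this: the argument reduces to Fatou's lemma applied against two different non-negative measures.

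Fix $u_n \rightharpoonup u$ in $E$. To handle the fact that $\liminf$ need not be attained along the full sequence, extract $(u_{n_k})$ with $A(u_{n_k}) \to \ell := \liminf_{n \to \infty} A(u_n)$. Since $(u_{n_k})$ stays bounded in $E$, Corollary \ref{cor1} (whose hypothesis is guaranteed by $(M_1)$, giving $|t|^\mu \prec\prec M$) yields strong convergence in $L^\mu(\mathbb{R}^d)$ along a further subsequence; passing to yet another subsequence, not relabelled, we may assume $u_{n_k}(x) \to u(x)$ for a.e.\ $x \in \mathbb{R}^d$, and consequently $h_{u_{n_k}}(x,y) \to h_u(x,y)$ for a.e.\ $(x,y) \in \mathbb{R}^d \times \mathbb{R}^d$. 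Since $M \geq 0$ and continuous, Fatou's lemma with respect to the measure $V(x)\,dx$ (positive by $(V_1)$) and then with respect to $d\mu$ on $\mathbb{R}^d\times\mathbb{R}^d$ yields
$$\Psi(u) \leq \liminf_{k\to\infty}\Psi(u_{n_k}) \quad\text{and}\quad G(u) \leq \liminf_{k\to\infty}G(u_{n_k}).$$
Summing and invoking superadditivity of $\liminf$ gives $A(u) \leq \liminf_k A(u_{n_k}) = \ell$, which is the desired weak lower semi-continuity.

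The only genuine obstacle is the passage from weak convergence in $E$ to pointwise a.e.\ convergence, and this is precisely the content of Lemma \ref{lem1}/Corollary \ref{cor1}; once that is in hand, the remainder is a clean Fatou argument. As a consistency check, note that convexity of $M$ makes both $G$ and $\Psi$ convex on $E$, and convex functionals that are strongly lower semi-continuous are automatically weakly lower semi-continuous by Mazur's lemma; this provides an alternative route and confirms that the Fatou-based proof above is on firm ground.
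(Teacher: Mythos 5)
Your proof is correct and follows essentially the same Fatou-based strategy as the paper, but with two differences worth recording. First, for the non-local part $G$, the paper simply cites an external result ([\cite{Sabri}, Lemma 3.3]) to assert weak lower semi-continuity and only carries out the Fatou argument for $\Psi$; you instead run the same machinery for both summands, noting correctly that a.e.\ convergence $u_{n_k}(x)\to u(x)$ on $\mathbb{R}^d$ forces $h_{u_{n_k}}(x,y)\to h_u(x,y)$ a.e.\ on $\mathbb{R}^d\times\mathbb{R}^d$ so that Fatou applies against $d\mu$. That makes your argument self-contained, which is a modest improvement. Second, you are more careful than the paper about the subsequence bookkeeping: the paper passes to a subsequence for a.e.\ convergence and then writes $\Psi(u)\leq\liminf_{n}\Psi(u_n)$ over the \emph{original} index, which as written only delivers the inequality along the extracted subsequence; your device of first picking $(u_{n_k})$ realizing the $\liminf$ of $A(u_n)$, and only then extracting further for a.e.\ convergence, closes that small gap cleanly. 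Your parenthetical remark about convexity plus Mazur's lemma as an alternative route is also sound and is, in fact, the argument used elsewhere in the paper (Lemma \ref{lem4} relies on the convexity of $A$). One small caveat: both you and the paper implicitly use $(V_1)$ and $(V_2)$, since the compact embedding $E\hookrightarrow L^\mu(\mathbb{R}^d)$ (Corollary \ref{cor1} via Lemma \ref{lem1}) requires them and $V\geq V_0>0$ is needed for $V(x)M(u)\geq 0$ in the Fatou step; the lemma statement lists only $(m_1)$ and $(M_1)$, so this is a mild omission in the paper that your proof inherits rather than introduces.
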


\begin{proof}
 By [\cite{Sabri}, Lemma 3.3], $G$ is weakly lower semi-continuous, so it is enough to show that $\Psi$ is too. Let $(u_{n})\subset E$ be a sequence which converges weakly to $u$ in $E$. Since $E$ is compactly embedded in $L^{\mu}(\mathbb{R}^{d})$, it follows that $(u_{n})$ converges strongly to $u$ in $L^{\mu}(\mathbb{R}^{d})$. Up to a subsequence, $$u_{n}(x)\rightarrow u(x),\ a.e\ \text{in}\ \mathbb{R}^{d}.$$
  Using Fatou's lemma, we get $$\Psi(u)\leq \liminf_{n\rightarrow\infty}\Psi(u_{n}).$$
  Therefore, $A$ is weakly lower semi-continuous. Thus the proof.
\end{proof}

\begin{lemma}\label{derivebounded}
   Suppose that $(m_{1})$, $(M_{1})-(M_{2})$, $(V_{1})-(V_{2})$ and $(f_{1})$ hold. Then $I_{\lambda}^{'}$ maps bounded sets to bounded sets uniformly in $\lambda\in[1,2]$.
\end{lemma}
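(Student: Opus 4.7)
The plan is to fix $R>0$, take any $u\in E$ with $\|u\|\le R$, and bound $\sup_{\|v\|\le 1}|\langle I_{\lambda}'(u),v\rangle|$ by a constant depending only on $R$, uniformly for $\lambda\in[1,2]$. The representation \eqref{I'} splits the derivative into three pieces, which I will estimate separately. The unifying device is the pointwise bound
$$\overline{M}(m(t))=t\,m(t)-M(t)\le (m^{0}-1)\,M(t),$$
which follows from Young's equality together with $(m_{1})$. This lets me transfer modular bounds for $M(u)$ into Luxemburg-norm bounds for $m(u)$ in the conjugate space $L^{\overline{M}}$, exactly what is needed to dualize each term of $I_{\lambda}'(u)$.

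For the fractional $M$-Laplacian term, I apply the Orlicz H\"{o}lder inequality in the product space $(\mathbb{R}^{d}\times\mathbb{R}^{d},d\mu)$ to obtain
$$|\langle (-\triangle)^{s}_{m}u,v\rangle|\le \|m(h_{u})\|_{L^{\overline{M}}(d\mu)}\,\|h_{v}\|_{L^{M}(d\mu)}.$$
Integrating the pointwise inequality above against $d\mu$ bounds the $L^{\overline{M}}(d\mu)$-modular of $m(h_{u})$ by $(m^{0}-1)G(u)$, which by Lemma \ref{lem3}(i) is at most $(m^{0}-1)\xi_{1}(\|u\|)$; Proposition \ref{KML} (applied in the product measure space) then turns this into a Luxemburg-norm bound. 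The second factor $\|h_{v}\|_{L^{M}(d\mu)}=[v]_{(s,M)}$ is dominated by $\|v\|$. The potential term is handled by the same recipe, but now in the weighted Orlicz space $L^{M}(\mathbb{R}^{d},V\,dx)$: H\"{o}lder gives $|\int V(x)m(u)v\,dx|\le 2\|m(u)\|_{L^{\overline{M}}(V\,dx)}\|v\|_{L^{M}(V\,dx)}$, and the weighted integral of $\overline{M}\circ m$ is controlled by $(m^{0}-1)\int V\,M(u)\,dx\le (m^{0}-1)\xi_{1}(\|u\|_{(V,M)})$ via Lemma \ref{lem3}(ii), while $\|v\|_{L^{M}(V\,dx)}\le 2\|v\|_{(V,M)}\le 2\|v\|$.

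For the nonlinear term, $(f_{1})$ gives $\int f(x,u)v\,dx=p\int\xi(x)|u|^{p-2}u\,v\,dx$, and I apply the classical three-factor H\"{o}lder inequality with exponents $\mu/(\mu-p)$, $\mu/(p-1)$, $\mu$, whose reciprocals sum to $1$ because $1<p<\mu$:
$$\bigg|\int_{\mathbb{R}^{d}}f(x,u)v\,dx\bigg|\le p\,\|\xi\|_{L^{\mu/(\mu-p)}}\,\|u\|_{L^{\mu}}^{p-1}\,\|v\|_{L^{\mu}}.$$
Corollary \ref{cor1} provides the continuous embedding $E\hookrightarrow L^{\mu}(\mathbb{R}^{d})$, so both $L^{\mu}$-norms are dominated by the $E$-norm, and the factor $\lambda\in[1,2]$ is absorbed into the constant. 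Summing the three bounds yields $\|I_{\lambda}'(u)\|_{E'}\le C(R)$ uniformly in $\lambda\in[1,2]$.

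The step I expect to be the most delicate is the first estimate: correctly instantiating the Orlicz H\"{o}lder inequality on the product measure space $(\mathbb{R}^{d}\times\mathbb{R}^{d},d\mu)$ and then chaining Young's equality, $(m_{1})$, Lemma \ref{lem2}, Lemma \ref{lem3}(i), and Proposition \ref{KML} in the right order to pass from the modular $\int\!\!\int\overline{M}(m(h_{u}))\,d\mu$ to a genuine Luxemburg-norm bound on $m(h_{u})$. Once that template is in place, the potential term is a verbatim adaptation with $V\,dx$ replacing $d\mu$, and the polynomial term is a routine three-factor H\"{o}lder estimate relying only on $(f_{1})$ and the $L^{\mu}$-embedding.
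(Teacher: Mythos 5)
Your proof is correct and follows essentially the same strategy as the paper's: split $I_\lambda'(u)$ into its three terms, bound each via an Orlicz H\"{o}lder inequality together with the modular-to-norm estimates, and absorb $\lambda\le 2$ into constants. The only cosmetic differences are that you handle the potential term with a weighted Orlicz H\"{o}lder inequality where the paper applies pointwise Young's inequality to $V(x)m(u)v$, and you pass from modular bounds to Luxemburg-norm bounds via Proposition \ref{KML} rather than via the conjugate growth indices supplied by Lemma \ref{lem2}; both routes yield the same uniform estimate.
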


\begin{proof}
    Let $u,v\in E$. Using H\"{o}lder and Young inequalities, we compute \begin{align*}%\label{bounded}
 |\langle I_{\lambda}^{'}(u),v\rangle|&=\bigg{|}\int_{\mathbb{R}^{d}}\int_{\mathbb{R}^{d}}m(h_{u})h_v d\mu+\int_{\mathbb{R}^{d}}V(x)m(u)vdx-\lambda p \int_{\mathbb{R}^{d}}\xi(x)|u|^{p-2}uvdx\bigg{|}\\
 &\leq \|m(h_u)\|_{(\overline{M})}\|h_v\|_{(M)}+ \int_{\mathbb{R}^{d}}V(x)\overline{M}(m(u))dx+ \int_{\mathbb{R}^{d}}V(x)M(v)dx\\&+2p\|\xi\|_{L^{\frac{\mu}{\mu-p}}}\|u\|_{L^{\mu}(\mathbb{R}^{d})}^{p-1}\|v\|_{L^{\mu}(\mathbb{R}^{d})}\\
 \end{align*}

 then according to Lemma \ref{lem2}, we obtain

 \begin{align*}
|\langle I_{\lambda}^{'}(u),v\rangle|&\leq \bigg{[}\bigg{(} \int_{\mathbb{R}^{d}}\int_{\mathbb{R}^{d}}\overline{M}(m(h_{u}))d\mu\bigg{)}^{\frac{1}{\bar{m}_0}}+
 \bigg{(}\int_{\mathbb{R}^{d}}\int_{\mathbb{R}^{d}}\overline{M}(m(h_{u}))d\mu\bigg{)}^{\frac{1}{\bar{m}^0}}\bigg{]}[v]_{(s,M)}\\&+ \int_{\mathbb{R}^{d}}V(x)\overline{M}(m(u))dx+ \int_{\mathbb{R}^{d}}V(x)M(v)dx+2p\|\xi\|_{L^{\frac{\mu}{\mu-p}}}\|u\|_{L^{\mu}(\mathbb{R}^{d})}^{p-1}\|v\|_{L^{\mu}(\mathbb{R}^{d})}\\
 \end{align*}
 combining [Lemma 2.9, \cite{7}], Lemma \ref{lem3} and Corollary \ref{cor1}, we get
 \begin{align*}
 |\langle I_{\lambda}^{'}(u),v\rangle|&\leq \bigg{[}\bigg{(} m^0\int_{\mathbb{R}^{d}}\int_{\mathbb{R}^{d}}M(h_{u})d\mu\bigg{)}^{\frac{1}{\bar{m}_0}}+
 \bigg{(}m^0\int_{\mathbb{R}^{d}}\int_{\mathbb{R}^{d}}M(m(h_{u}))d\mu\bigg{)}^{\frac{1}{\bar{m}^0}}\bigg{]}[v]_{(s,M)}\\&+m^{0} \int_{\mathbb{R}^{d}}V(x)M(u)dx+ \int_{\mathbb{R}^{d}}V(x)M(v)dx+2p C^{p}\|\xi\|_{L^{\frac{\mu}{\mu-p}}}\|u\|^{p-1}\|v\|\\
 &\leq \big{[}(m^0)^{\frac{1}{\bar{m}_0}}(\xi_{1}(\|u\|))^{\frac{1}{\bar{m}_0}}+(m^0)^{\frac{1}{\bar{m}^0}}(\xi_{1}(\|u\|))^{\frac{1}{\bar{m}^0}}\big{]}\|v\|\\&
 +m^{0}\xi_{1}(\|u\|_{(V,M)})+\xi_{1}(\|v\|_{(V,M)})+2pC^{p}\|\xi\|_{L^{\frac{\mu}{\mu-p}}}\|u\|^{p-1}\|v\|,
\end{align*}
 thus \begin{align*}
                     \|I_{\lambda}^{'}(u)\|&\leq\big{[}(m^0)^{\frac{1}{\bar{m}_0}}(\xi_{1}(\|u\|))^{\frac{1}{\bar{m}_0}}
                     +(m^0)^{\frac{1}{\bar{m}^0}}(\xi_{1}(\|u\|))^{\frac{1}{\bar{m}^0}}\big{]}\\
                     &+m^{0}\xi_{1}(\|u\|)+1+2pC^{p}\|\xi\|_{L^{\frac{\mu}{\mu-p}}}\|u\|^{p-1}.
                  \end{align*}  From the last inequality, we conclude that $I_{\lambda}^{'}$ maps bounded sets to bounded sets for $\lambda\in[1,2]$.
\end{proof}

\begin{lemma}\label{lem4}
  If $u_{n}\rightharpoonup u$ in $E$ and
  \begin{equation}\label{PsiG}
  \langle A^{'}(u_{n}),u_{n}-u\rangle\rightarrow0,\ \text{as}\ n\rightarrow+\infty,
  \end{equation}
   then $u_{n}\rightarrow u$ in $E$.
\end{lemma}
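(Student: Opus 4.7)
The plan is to combine strict monotonicity of $A'$ with a Brezis--Lieb splitting. First, condition $(m_{1})$ provides the estimate $\overline{M}(m(t))\leq C M(t)$, so $A'(u)\in E^{*}$ and the weak convergence $u_{n}\rightharpoonup u$ gives $\langle A'(u),u_{n}-u\rangle\to 0$. Subtracting from the hypothesis $\langle A'(u_{n}),u_{n}-u\rangle\to 0$ yields
\begin{equation*}
\int_{\mathbb{R}^{d}}\int_{\mathbb{R}^{d}}\bigl(m(h_{u_{n}})-m(h_{u})\bigr)(h_{u_{n}}-h_{u})\,d\mu+\int_{\mathbb{R}^{d}}V(x)\bigl(m(u_{n})-m(u)\bigr)(u_{n}-u)\,dx\to 0.
\end{equation*}
Monotonicity of $m$ makes both integrands pointwise non-negative, so each integral tends to zero individually.

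Next, I will upgrade this to convergence of the modulars. The linear map $u\mapsto h_{u}$ sends $E$ continuously into $L^{M}(d\mu)$, so $h_{u_{n}}\rightharpoonup h_{u}$ in $L^{M}(d\mu)$; pairing with $m(h_{u})\in L^{\overline{M}}(d\mu)$ gives $\int\int m(h_{u})(h_{u_{n}}-h_{u})\,d\mu\to 0$, and combined with Step~1 this produces
\begin{equation*}
\int_{\mathbb{R}^{d}}\int_{\mathbb{R}^{d}}m(h_{u_{n}})(h_{u_{n}}-h_{u})\,d\mu\to 0,\qquad \int_{\mathbb{R}^{d}}V(x)m(u_{n})(u_{n}-u)\,dx\to 0.
\end{equation*}
The convexity inequality $M(h_{u})-M(h_{u_{n}})\geq m(h_{u_{n}})(h_{u}-h_{u_{n}})$, once integrated, yields $\limsup\int\int M(h_{u_{n}})\,d\mu\leq \int\int M(h_{u})\,d\mu$, while weak lower semicontinuity (Lemma~\ref{lem5}) supplies the opposite inequality. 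Hence $\int\int M(h_{u_{n}})\,d\mu\to\int\int M(h_{u})\,d\mu$, and the same argument gives $\int V(x)M(u_{n})\,dx\to\int V(x)M(u)\,dx$.

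Finally, by Corollary~\ref{cor1} the embedding $E\hookrightarrow L^{\mu}(\mathbb{R}^{d})$ is compact, so after passing to a subsequence $u_{n}\to u$ pointwise a.e., and therefore $h_{u_{n}}\to h_{u}$ pointwise $\mu$-a.e. on $\mathbb{R}^{d}\times\mathbb{R}^{d}$. The Brezis--Lieb lemma for Orlicz modulars then combines with the modular convergence just obtained to give
\begin{equation*}
\int_{\mathbb{R}^{d}}\int_{\mathbb{R}^{d}}M(h_{u_{n}-u})\,d\mu\to 0 \qquad\text{and}\qquad \int_{\mathbb{R}^{d}}V(x)M(u_{n}-u)\,dx\to 0.
\end{equation*}
Because $M$ satisfies the $\triangle_{2}$-condition (via $(m_{1})$), Proposition~\ref{cv} converts these into $[u_{n}-u]_{(s,M)}\to 0$ and $\|u_{n}-u\|_{(V,M)}\to 0$, and the uniqueness of the weak limit promotes subsequential convergence to convergence of the full sequence in $E$. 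The most delicate point will be justifying the Brezis--Lieb splitting on the product space equipped with the singular measure $d\mu=dx\,dy/|x-y|^{d}$: one needs the pointwise bound $|M(a)-M(a-b)|\leq \varepsilon M(a-b)+C_{\varepsilon}M(b)$ (a consequence of $\triangle_{2}$) together with dominated convergence to obtain the decomposition uniformly in $n$.
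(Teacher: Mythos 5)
Your argument is correct but takes a genuinely different route from the paper's. You attack via strict monotonicity of $A'$ and a Brezis--Lieb splitting of the modulars: subtracting $\langle A'(u),u_n-u\rangle\to 0$ from the hypothesis, splitting into nonnegative pieces, upgrading to convergence of the modulars $\int\!\!\int M(h_{u_n})\,d\mu\to\int\!\!\int M(h_u)\,d\mu$ and $\int V M(u_n)\to\int V M(u)$ (via convexity on one side and weak lsc on the other), then combining with a.e.\ convergence and the Orlicz Brezis--Lieb inequality to force the modulars of $u_n-u$ to zero, and finally invoking Proposition~\ref{cv}. The paper instead argues by contradiction: it shows $A(u_n)\to A(u)$ using convexity and weak lower semicontinuity of $A$ (Lemma~\ref{lem5}), then applies a uniform-convexity estimate from Lamperti's Theorem~2.1 in reference~\cite{17}, namely $\tfrac12 A(u)+\tfrac12 A(u_{n_m})-A\bigl(\tfrac{u_{n_m}+u}{2}\bigr)\geq A\bigl(\tfrac{u_{n_m}-u}{2}\bigr)\geq\xi_0(\beta)$, and derives a contradiction by letting $m\to\infty$. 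The paper's route is shorter but relies essentially on $(M_2)$ (convexity of $t\mapsto M(\sqrt t)$), which is precisely the hypothesis needed for Lamperti's Clarkson-type inequality; your route trades that for the $\Delta_2$-based Brezis--Lieb inequality and the compact embedding of Corollary~\ref{cor1} to obtain a.e.\ convergence. Two small points: you cite Lemma~\ref{lem5} for weak lsc of $G$ and $\Psi$ separately, whereas that lemma as stated only asserts weak lsc of the sum $A$ (though its proof does establish both separately); and the ``delicate'' Brezis--Lieb step you flag is in fact unproblematic, since the inequality $|M(a)-M(a-b)|\leq\varepsilon M(a-b)+C_\varepsilon M(b)$ holds under $\Delta_2$, the measure $d\mu$ is $\sigma$-finite with the diagonal negligible, and boundedness of $(u_n)$ in $E$ supplies the required uniform modular bound.
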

\begin{proof}
	Since $(u_{n})$ converges weakly to $u$ in $E$, then $([u_{n}]_{(s,M)})$ and $(\|u_{n}\|_{(V,M)})$ are a bounded sequences of real numbers. That fact and relations $(i)$ and $(ii)$ from lemma \ref{lem3}, imply that the sequences $(G(u_{n}))$ and $(\Psi(u_{n}))$ are bounded. This means that the sequence $(A(u_{n}))$ is bounded. Then, up to a subsequence, $A(u_{n})\rightarrow c$.
	Furthermore, Lemma \ref{lem5} implies
	\begin{equation}\label{23}
	A(u)\leq \displaystyle\liminf_{n\rightarrow\infty}
	A(u_{n})=c.
	\end{equation}
	Since $A$ is convex, we have
                                                     \begin{equation}\label{24}
                                                       A(u)\geq A(u_{n})+\langle A^{'}(u_{n}),u-u_{n}\rangle.
                                                     \end{equation}
	Therefore, combining \eqref{PsiG}, \eqref{23} and \eqref{24}, we conclude that $A(u)=c$.\\
	Taking into account that $\displaystyle\frac{u_{n}+u}{2}$ converges weakly to $u$ in $E$ and using again the weak lower semi-continuity of $A$, we find
	\begin{equation}\label{11}
	c=A(u)\leq \displaystyle\liminf_{n\rightarrow\infty}A\bigg{(}\frac{u_{n}+u}{2}\bigg{)}.
	\end{equation}
	We argue by contradiction, and suppose that $(u_{n})$ does not converge to $u$ in $E$. Then, there exists $\beta>0$ and a subsequence $(u_{n_{m}})$ of $(u_{n})$ such that
	
$$\|\frac{u_{n_{m}}-u}{2}\|_{(V,M)}\geq\beta,$$ by $(i)$ and $(ii)$ in lemma \ref{lem3}, we infer that
 \begin{align*}
   A\bigg{(}\frac{u_{n_{m}}-u}{2}\bigg{)}&\geq\xi_{0}\bigg{(}\|\frac{u_{n_{m}}-u}{2}\|_{(V,M)}\bigg{)}+\xi_{0}
                                          \bigg{(}[\frac{u_{n_{m}}-u}{2}]_{(s,M)}\bigg{)}\\
                                         &\geq\xi_{0}\bigg{(}\|\frac{u_{n_{m}}-u}{2}\|_{(V,M)}\bigg{)}\\
                                         &\geq\xi_{0}(\beta).
 \end{align*}

	On the other hand, the $\triangle_{2}-$condition and relation $(M_{2})$ enable us to apply [ \cite{17}, Theorem $2.1$], in order to obtain
	\begin{equation}\label{13}
	\frac{1}{2}A(u)+\frac{1}{2}A(u_{n_{m}})-A\bigg{(}\frac{u_{n_{m}}+u}{2}\bigg{)}\geq A\bigg{(}\frac{u_{n_{m}}-u}{2}\bigg{)}\geq\xi_{0}(\beta),\ \text{for all}\ m\in\mathbb{N}.
	\end{equation}
	Letting $m\rightarrow\infty$ in the above inequality, we get
	\begin{equation}\label{14}
	c-\xi_{0}(\beta)\geq\displaystyle \limsup_{m\rightarrow\infty}A\bigg{(}\frac{u_{n_{m}}+u}{2}\bigg{)}\geq c.
	\end{equation}
	That is a contradiction. It follows that  $(u_{n})$ converges strongly to $u$ in $E$. Thus lemma \ref{lem4} is proved.
\end{proof}

\section{Proof of Theorem \ref{thm1}}

Let $(E,\|.\|)$ be a Banach space and $E=\overline{\bigoplus_{j\in\mathbb{N}}X_{j}}$ with $dim\ X_{j}<\infty$ for any $j\in\mathbb{N}$. Set $Y_{k}=\bigoplus_{j=1}^{k}X_{j}$, $Z_{k}=\overline{\bigoplus_{j=k+1}^{\infty}X_{j}}$ and $$B_{k}=\{u\in Y_{k}:\ \|u\|\leq \rho_{k}\},\ \ \ \text{for}\ \rho_{k}>0.$$
  Consider a $C^{1}$-functional $I_{\lambda}:\ E\rightarrow\mathbb{R}$ defined as $$I_{\lambda}(u)=A(u)-\lambda B(u),\ \lambda\in[1,2].$$
  Let, for $k\geq2$, $$\Gamma_{k}:=\{\gamma\in \mathcal{C}(B_{k},E):\ \gamma\ \text{is odd},\ \gamma|_{\partial B_{k}}=id\},$$
  $$c_{k}:=\inf_{\gamma\in\Gamma_{k}}\max_{u\in B_{k}}I_{\lambda}(\gamma (u)).$$

In order to prove Theorem \ref{thm1}, we apply the following variant of fountain Theorem due to Zou \cite{Zou}.

\begin{thm}\label{Fountain}

  Assume that $I_{\lambda}$ satisfies the following assumptions:\\

  $(i)$ $I_{\lambda}$ maps bounded sets to bounded sets for $\lambda\in[1,2]$ and $I_{\lambda}(-u)=I_{\lambda}(u)$ for all $(\lambda,u)\in [1,2]\times E$.

  $(ii)$ $B(u)\geq0$, $B(u)\rightarrow+\infty$ as $\|u\|\rightarrow+\infty$ on any finite dimensional subspace of $E$.

  $(iii)$ There exist $r_{k}<\rho_{k}$ such that
  $$a_{k}(\lambda):=\inf_{u\in Z_{k},\|u\|=\rho_{k}}I_{\lambda}(u)\geq 0>b_{k}(\lambda):=\max_{u\in Y_{k},\|u\|=r_{k}}I_{\lambda}(u),\ \forall\lambda\in[1,2],$$ and $$d_{k}(\lambda):=\inf_{u\in Z_{k},\|u\|\leq\rho_{k}}I_{\lambda}(u)\rightarrow0\ \text{as}\ k\rightarrow\infty\ \text{uniformly for}\ \lambda\in[1,2].$$

  Then there exist $\lambda_{n}\rightarrow1$, $u_{\lambda_{n}}\in Y_{n}$ such that
  \begin{equation}\label{Theoremres}
   I_{\lambda_{n}}^{'}|_{Y_{n}}(u_{\lambda_{n}})=0,\ I_{\lambda_{n}}(u_{\lambda_{n}})\rightarrow c_{k}\in [d_{k}(2),b_{k}(1)]\ \text{as}\ n\rightarrow\infty.
  \end{equation}
  Particularly, if $(u_{\lambda_{n}})$ has a convergent subsequence for every $k$, then $I_{1}$ has infinitely many nontrivial critical points $\{u_{k}\}\in E\backslash\{0\}$ satisfying $I_{1}(u_{k})\rightarrow0^{-}$ as $k\rightarrow\infty$.
\end{thm}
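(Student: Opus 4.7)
The plan is to combine a symmetric Fountain-type minimax with Jeanjean's monotonicity trick, exploiting the fact that the parameter $\lambda$ enters $I_\lambda = A - \lambda B$ linearly with $B \geq 0$. For fixed $k \geq 2$ and $\lambda \in [1,2]$, I would set
\[
 c_k(\lambda) := \inf_{\gamma \in \Gamma_k} \max_{u \in B_k} I_\lambda(\gamma(u)).
\]
A Borsuk--Ulam style intersection argument, using oddness of $\gamma$ and the boundary condition $\gamma|_{\partial B_k} = \mathrm{id}$ together with $\dim Y_k = k$, gives $\gamma(B_k) \cap Z_k \cap \overline{B}_{\rho_k}(0) \neq \emptyset$ for every $\gamma \in \Gamma_k$, and hence $c_k(\lambda) \geq d_k(\lambda)$. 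A matching upper bound $c_k(\lambda) \leq b_k(\lambda)$ is furnished by a tailored $\gamma_\ast \in \Gamma_k$ which radially deforms $B_k$ inside $Y_k$ toward the sphere $\{\|u\| = r_k\}$ while remaining the identity on $\partial B_k$; the sign condition $b_k(\lambda) < 0$ together with $r_k < \rho_k$ make this construction deliver the required estimate. Combined with the fact that $\lambda \mapsto c_k(\lambda)$ is non-increasing (a consequence of $B \geq 0$), this places $c_k(\lambda) \in [d_k(2), b_k(1)]$.

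The next stage is the monotonicity trick itself. Being monotone, $c_k(\lambda)$ is differentiable on a set $\Lambda \subset [1,2]$ of full Lebesgue measure. At each $\lambda_0 \in \Lambda$, I would fix $\lambda_j \downarrow \lambda_0$ together with near-optimal paths $\gamma_j \in \Gamma_k$ satisfying $\max_{B_k} I_{\lambda_j} \circ \gamma_j \leq c_k(\lambda_j) + (\lambda_j - \lambda_0)$, and then select $v_j \in \gamma_j(B_k)$ whose $I_{\lambda_j}$-value almost realizes this maximum. The existence of $c_k'(\lambda_0)$ then yields, after careful bookkeeping, a uniform bound on $B(v_j)$, and hence on $A(v_j) = I_{\lambda_j}(v_j) + \lambda_j B(v_j)$; a standard coercivity-type control inherent to the Fountain framework gives $\sup_j \|v_j\| < \infty$. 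Ekeland's variational principle, combined with a deformation argument, converts $(v_j)$ into a bounded Palais--Smale sequence for $I_{\lambda_0}$ at level $c_k(\lambda_0)$. Repeating the argument restricted to the finite-dimensional $Y_n$ (for each $n \geq k$), this bounded sequence has a convergent subsequence producing an actual critical point $u^{(n)}_{\lambda_0} \in Y_n$ of $I_{\lambda_0}|_{Y_n}$. A diagonal choice of $\lambda_n \in \Lambda$ with $\lambda_n \to 1^+$ and $c_k^{(n)}(\lambda_n) \to c_k$ then delivers $u_{\lambda_n} \in Y_n$ with $I_{\lambda_n}|_{Y_n}'(u_{\lambda_n}) = 0$ and $I_{\lambda_n}(u_{\lambda_n}) \to c_k$.

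The main obstacle is converting the almost-everywhere differentiability of $c_k$ into a genuinely bounded Palais--Smale sequence for $I_{\lambda_0}$: this is the technical heart of Jeanjean's trick. The subtlety is that each $\gamma_j$ is near-optimal for $I_{\lambda_j}$, not $I_{\lambda_0}$, so the only quantitative bridge available is the difference quotient $(c_k(\lambda_j) - c_k(\lambda_0))/(\lambda_j - \lambda_0) \to c_k'(\lambda_0)$. Extracting from this a uniform bound on $B(v_j)$ without losing near-maximality at $\lambda_0$ requires a delicate two-parameter variational estimate, and the subsequent deformation argument on $Y_n$ must rule out drift along directions where $I_{\lambda_0}$ is flat. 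Once this step is in place, the supplementary conclusion $I_1(u_k) \to 0^-$ (under subsequential convergence of $(u_{\lambda_n})$ for each $k$) follows from hypothesis (iii), since the squeeze $d_k(2) \leq c_k \leq b_k(1)$ with $d_k(2) \to 0$ forces $c_k \to 0^-$.
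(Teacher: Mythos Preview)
The paper does not prove this theorem at all: Theorem~\ref{Fountain} is quoted verbatim as the ``variant fountain theorem due to Zou \cite{Zou}'' and is used as a black box in the proof of Theorem~\ref{thm1}. There is therefore no proof in the paper to compare against; your proposal is an attempt to reconstruct Zou's original argument rather than anything the authors do.

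That said, your outline captures the correct architecture of Zou's proof---the symmetric minimax class $\Gamma_k$, the Borsuk--Ulam intersection giving $c_k(\lambda)\ge d_k(\lambda)$, and the monotonicity trick exploiting that $\lambda\mapsto c_k(\lambda)$ is non-increasing. One point to tighten: the phrase ``a standard coercivity-type control inherent to the Fountain framework gives $\sup_j\|v_j\|<\infty$'' is misleading, since no coercivity of $I_\lambda$ is assumed. In Zou's scheme the boundedness comes purely from the differentiability of $c_k$ at $\lambda_0$ combined with the finite-dimensionality of $Y_n$: one works from the outset on $Y_n$, where every bounded sequence has a convergent subsequence, and the monotonicity trick bounds $B(v_j)$ (hence $A(v_j)$, hence $\|v_j\|$ via assumption~(ii) restricted to $Y_n$). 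Your ordering---first produce a bounded PS sequence on $E$, then restrict to $Y_n$---inverts this and would not go through without an unjustified coercivity hypothesis.
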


Since $E$ is reflexive and separable, we choose a basis $\{e_{j}:\ j\in\mathbb{N}\}$ of $E$ and $\{e_{j}^{*}:\ j\in\mathbb{N}\}$ of $E^{*}$ such that $\langle e_{i}^{*},e_{j}\rangle=\delta_{i,j}$, $\forall i,j\in\mathbb{N}$. Let $X_{j}=\langle e_{j}\rangle$ for all $j\in\mathbb{N}$ and
$$Y_{k}=\oplus_{j=1}^{k}X_{j}=\oplus_{j=1}^{k}\langle e_{j}\rangle,\ \ \ Z_{k}=\overline{\oplus_{j=k+1}^{\infty}X_{j}}=\overline{\oplus_{j=k}^{\infty}\langle e_{j}\rangle}\  \text{for all}\ k\ \text{in}\ \mathbb{N}.$$

In order to apply Theorem \ref{Fountain}, we need the following lemmas.

\begin{lemma}\label{zey}
Let $(u_{\lambda_n})_{n\in\mathbb{N}}$ be a bounded sequence of $E$ satisfying \eqref{Theoremres},
                                                               % \begin{equation}\label{unu0}
                                                                 $  u_{\lambda_n}\rightharpoonup u_{0}\ \text{as}\ n\rightarrow+\infty$
                                                                %\end{equation}
 for some $u_{0}\in E$. Then
 $$\lim_{n\rightarrow+\infty}\langle I_{\lambda_{n}}^{'}(u_{\lambda_n}),u_{\lambda_n}-u_{0}\rangle=0.$$
\end{lemma}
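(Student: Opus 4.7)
The plan is to exploit the fact that $u_{\lambda_n}$ is a critical point of $I_{\lambda_n}$ restricted to the finite dimensional subspace $Y_n$, so $\langle I'_{\lambda_n}(u_{\lambda_n}),w\rangle=0$ for every $w\in Y_n$. The natural idea is then to split $u_{\lambda_n}-u_{0}$ into a piece lying in $Y_n$ (which is killed by $I'_{\lambda_n}(u_{\lambda_n})$) and a small remainder in $E$.

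More precisely, I would first produce a sequence $(v_{n})$ with $v_n\in Y_n$ and $v_n\to u_0$ strongly in $E$. This is possible because, by construction, $\bigcup_{n\in\mathbb{N}} Y_n=\bigoplus_{j\in\mathbb{N}}X_j$ is dense in $E$ and the $Y_n$ are nested, so $\mathrm{dist}_{E}(u_{0},Y_n)\to 0$ and one may choose $v_n\in Y_n$ almost realizing this distance. I would then write
\begin{equation*}
\langle I'_{\lambda_n}(u_{\lambda_n}),u_{\lambda_n}-u_{0}\rangle
= \langle I'_{\lambda_n}(u_{\lambda_n}),u_{\lambda_n}-v_{n}\rangle
+ \langle I'_{\lambda_n}(u_{\lambda_n}),v_{n}-u_{0}\rangle .
\end{equation*}
The first term vanishes identically, because $u_{\lambda_n}-v_n\in Y_n$ and $I'_{\lambda_n}|_{Y_n}(u_{\lambda_n})=0$ by the assumption \eqref{Theoremres}.

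For the second term, I would invoke Lemma \ref{derivebounded}: since $(u_{\lambda_n})$ is bounded in $E$ and $\lambda_n\in[1,2]$, the family $I'_{\lambda_n}(u_{\lambda_n})$ is uniformly bounded in $E^{*}$, say $\|I'_{\lambda_n}(u_{\lambda_n})\|_{E^*}\le K$ for some constant $K>0$. Hence
\begin{equation*}
\bigl|\langle I'_{\lambda_n}(u_{\lambda_n}),v_{n}-u_{0}\rangle\bigr|
\le K\,\|v_{n}-u_{0}\|\longrightarrow 0
\quad\text{as }n\to\infty,
\end{equation*}
by our choice of $(v_n)$. Combining the two estimates yields the claim.

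I do not anticipate a substantial obstacle; the only mildly delicate point is the construction of the approximating sequence $v_n\in Y_n$, which is handled by the density of $\bigcup_n Y_n$ in $E$ together with the nesting of the subspaces. The role of weak convergence $u_{\lambda_n}\rightharpoonup u_0$ is implicit, entering only through the boundedness of $(u_{\lambda_n})$ needed to apply Lemma \ref{derivebounded}; the stronger conclusion actually holds for any bounded sequence satisfying \eqref{Theoremres}, independently of the weak limit.
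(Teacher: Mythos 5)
Your proof is correct and follows essentially the same route as the paper's: both choose $w_n\in Y_n$ with $w_n\to u_0$ (possible since $\overline{\bigcup_n Y_n}=E$), split $\langle I'_{\lambda_n}(u_{\lambda_n}),u_{\lambda_n}-u_0\rangle$ into the piece paired with $u_{\lambda_n}-w_n\in Y_n$ (which vanishes because $I'_{\lambda_n}|_{Y_n}(u_{\lambda_n})=0$) and the remainder paired with $w_n-u_0$, which is controlled by the uniform bound on $\|I'_{\lambda_n}(u_{\lambda_n})\|_{E^*}$ from Lemma~\ref{derivebounded}. Your closing remark that weak convergence enters only through the boundedness of $(u_{\lambda_n})$ is also accurate.
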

\begin{proof}
Using Lemma \ref{derivebounded}, we observe that $(I_{\lambda_{n}}^{'}(u_{\lambda_n}))_{n\in\mathbb{N}}$ is bounded in $E^{*}$. As $E=\overline{\cup_{n}Y_{n}}$, we can choose $w_{n}\in Y_{n}$ such that $w_{n}\rightarrow u_{0}$ as $n\rightarrow+\infty$.
  \begin{align*}
   \lim_{n\rightarrow+\infty}|\langle I_{\lambda_{n}}^{'}(u_{\lambda_n}),u_{\lambda_n}-u_{0}\rangle|  %& =\lim_{n\rightarrow+\infty}|\langle I_{\lambda_{n}}^{'}(u_{\lambda_n}),u_{\lambda_n}-w_{n}\rangle|+|\langle I_{\lambda_{n}}^{'}(u_{\lambda_n}),w_{n}-u_{0}\rangle|\\
   &\leq \lim_{n\rightarrow+\infty}|\langle I_{\lambda_{n}}^{'}(u_{\lambda_n}),u_{\lambda_n}-w_{n}\rangle|+\|I_{\lambda_{n}}^{'}(u_{\lambda_n})\|\|w_{n}-u_{0}\|=0.
   %&=\lim_{n\rightarrow+\infty}\langle I_{\lambda_{n}}^{'}|_{Y_n}(u_{n}),u_{n}-w_{n}\rangle=0.
  \end{align*}
 Hence
  $$\lim_{n\rightarrow+\infty}\langle I_{\lambda_{n}}^{'}(u_{\lambda_n}),u_{\lambda_n}-u_{0}\rangle=0.$$
Thus the proof.
\end{proof}

\begin{lemma}\label{Bpositive}
  Let $(m_{1})$, $(V_{1})$, $(V_{2})$, $(M_{1})$ and $(f_{1})$ be satisfied. Then $B(u)\geq0$ for all $u\in E$. Furthermore, $B(u)\rightarrow\infty$ as $\|u\|\rightarrow\infty$ on any finite dimensional subspace of $E$.
\end{lemma}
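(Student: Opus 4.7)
By $(f_{1})$ we have $F(x,t)=\xi(x)|t|^{p}$ with $\xi$ positive on $\mathbb{R}^{d}$, so the first assertion is immediate:
\[
B(u)=\int_{\mathbb{R}^{d}}\xi(x)|u(x)|^{p}\,dx\geq 0\qquad\text{for all }u\in E.
\]
The plan for the second assertion is to realise $B^{1/p}$ as a norm on a finite-dimensional subspace and then invoke equivalence of norms.

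Let $W\subset E$ be a finite-dimensional subspace and define on $W$ the functional
\[
\|u\|_{\xi,p}:=\Bigl(\int_{\mathbb{R}^{d}}\xi(x)|u|^{p}\,dx\Bigr)^{1/p}.
\]
First I would check that $\|\cdot\|_{\xi,p}$ takes finite values on every $u\in E$. By $(f_{1})$ we have $\xi\in L^{\mu/(\mu-p)}(\mathbb{R}^{d})$, so Hölder's inequality gives
\[
\int_{\mathbb{R}^{d}}\xi(x)|u|^{p}\,dx\leq \|\xi\|_{L^{\mu/(\mu-p)}}\,\|u\|_{L^{\mu}(\mathbb{R}^{d})}^{p},
\]
and Corollary \ref{cor1} ensures $u\in L^{\mu}(\mathbb{R}^{d})$ with $\|u\|_{L^{\mu}(\mathbb{R}^{d})}\leq C\|u\|$. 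Hence $\|u\|_{\xi,p}<\infty$ for every $u\in E$, and in particular $\|\cdot\|_{\xi,p}$ is well defined on $W$.

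Next I would verify that $\|\cdot\|_{\xi,p}$ is a norm on $W$. Positive homogeneity and the triangle inequality follow from the ordinary properties of $L^{p}$. For positive definiteness, suppose $u\in W$ satisfies $\|u\|_{\xi,p}=0$; since $\xi$ is continuous and strictly positive on $\mathbb{R}^{d}$ by $(f_{1})$, this forces $u=0$ a.e., hence $u=0$ in $W$. Now, on the finite-dimensional space $W$ any two norms are equivalent, so there exists $c_{W}>0$ with
\[
\|u\|_{\xi,p}\geq c_{W}\,\|u\|\qquad\text{for all }u\in W.
\]
Raising to the $p$-th power yields $B(u)=\|u\|_{\xi,p}^{p}\geq c_{W}^{p}\,\|u\|^{p}$, which shows $B(u)\to\infty$ as $\|u\|\to\infty$ with $u\in W$.

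The only mildly delicate point is the well-definedness step: one has to combine the Hölder estimate with the compact embedding in Corollary \ref{cor1} to ensure that $\|\cdot\|_{\xi,p}$ is actually finite on all of $E$ (so that it makes sense to speak of it as a norm on the finite-dimensional piece $W$). Once that is in place the conclusion is purely a dimension-counting argument via equivalence of norms, so I do not anticipate further obstacles.
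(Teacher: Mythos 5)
Your proposal is correct and takes a genuinely different route from the paper. You realise $u\mapsto(\int_{\mathbb{R}^{d}}\xi|u|^{p}\,dx)^{1/p}$ as a norm on the finite-dimensional subspace $W$ (finiteness follows from H\"older and the embedding of $E$ into $L^{\mu}(\mathbb{R}^{d})$; positive definiteness from the strict positivity and continuity of $\xi$), and then appeal to the equivalence of norms on finite-dimensional spaces to obtain $B(u)\geq c_{W}^{p}\|u\|^{p}$. This is cleaner and shorter than what the paper does. The paper instead proves, by a contradiction argument using compactness of the unit sphere in $H$, the quantitative measure-theoretic claim that there exists $c_{H}>0$ with $\meas(\Lambda_{u})\geq c_{H}$ for all $u\in H\setminus\{0\}$, where $\Lambda_{u}=\{x:\ \xi(x)|u(x)|^{p}\geq c_{H}\|u\|^{p}\}$, and deduces $B(u)\geq c_{H}^{2}\|u\|^{p}$ from it. The reason the paper opts for the heavier route is that the intermediate claim \eqref{epsilon} is reused verbatim in the proof of Lemma \ref{bk} (inequality \eqref{epsilonk}); your norm-equivalence argument, while fully adequate for the conclusion of Lemma \ref{Bpositive} itself, does not produce that reusable auxiliary fact, so if one adopted your proof one would still need to establish \eqref{epsilon} separately for Lemma \ref{bk}.
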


\begin{proof}
Evidently $B(u)\geq0$ for all $u\in E$ follows by $(f_{1})$. We {\bf claim} that for any finite dimensional subspace $H \subset E$, there exists a constant $ c_{H}> 0$ such that
  \begin{equation}\label{epsilon}
  \meas(\Lambda_{u})\geq c_{H},\ \text{for all}\ u\in H\setminus\{0\},
  \end{equation}
  where $$\Lambda_{u}=\{x\in\mathbb{R}^{d}:\ \xi(x)|u(x)|^{p}\geq c_{H}\|u\|^{p}\}.$$
 We argue by {\bf contradiction}, suppose that for any $n\in\mathbb{N}$ there exists $u_{n}\in H\setminus\{0\}$ such that
$$\meas(\{x\in\mathbb{R}^{d}:\ \xi(x)|u_{n}(x)|^{p}\geq\frac{1}{n}\|u_{n}\|^{p}\})<\frac{1}{n}.$$
 For each $n\in\mathbb{N}$, let $v_{n}=\displaystyle\frac{u_{n}}{\|u_{n}\|}\in H$, $\|v_{n}\|=1$, then
 \begin{equation}\label{measn}
   \meas(\{x\in\mathbb{R}^{d}:\ \xi(x)|v_{n}(x)|^{p}\geq\frac{1}{n}\})<\frac{1}{n}.
 \end{equation}
 Up to a subsequence, we may assume that $v_{n}\rightarrow v$ for some $v\in H$ and $\|v\|=1$\\
 % (since $H$ is a finite dimensional space).

 Furthermore, there exists a constant $\delta_{0}>0$ such that
 \begin{equation}\label{delta0}
   \meas(\{x\in\mathbb{R}^{d}:\ \xi(x)|v(x)|^{p}\geq\delta_{0}\})\geq \delta_{0}.
 \end{equation}

 {\bf In fact, if not}, $$\meas(\mathcal{A}_{n})<\frac{1}{n},\ \text{for all}\ n\in\mathbb{N},$$ where
 $$\mathcal{A}_{n}=\{x\in\mathbb{R}^{d}:\ \xi(x)|v(x)|^{p}\geq\frac{1}{n}\}.$$

 Let $m>n$, then $\mathcal{A}_{n}\subset\mathcal{A}_{m}$ and $\meas(\mathcal{A}_{n})\leq\meas(\mathcal{A}_{m})<\frac{1}{m}\rightarrow0$ as $m\rightarrow+\infty$,
 it yields
  $$\meas(\{x\in\mathbb{R}^{d}:\ \xi(x)|v(x)|^{p}\geq\frac{1}{n}\})=0,\ \text{for all}\ n\in\mathbb{N}.$$
  Therefore
  $$0\leq \int_{\mathbb{R}^{d}}\xi(x)|v|^{p+\mu}dx\leq\frac{\|v\|^{\mu}_{L^{\mu}(\mathbb{R}^{d})}}{n}\rightarrow0\ \text{as}\ n\rightarrow+\infty.$$
  This together with $(f_{1})$ yields $v=0$, $a.e$. which is in contradiction to $\|v\|=1$. Thus \eqref{delta0} is proved.\\

%Using Corollary \ref{cor1}, we deduce that

 %$$  |v_{n}-v|_{L^{\mu}(\mathbb{R}^{d})}\rightarrow0\ \text{as}\ n\rightarrow\infty.$$

 By H\"{o}lder inequality and Corollary \ref{cor1}, it holds that
 \begin{equation}\label{Lr}
   \int_{\mathbb{R}^{d}}\xi(x)|v_{n}-v|^{p}dx\leq \|\xi\|_{L^{\frac{\mu}{\mu-p}}}\bigg{(}\int_{\mathbb{R}^{d}}|v_{n}-v|^{\mu}dx\bigg{)}^{\frac{p}{\mu}}\rightarrow0\ \text{as}\ n\rightarrow\infty.
 \end{equation}
 Set $$\Lambda_{0}:=\{x\in\mathbb{R}^{d}:\ \xi(x)|v(x)|^{p}\geq\delta_{0}\}$$ and for all $n\in\mathbb{N}$,
 $$\Lambda_{n}:=\{x\in\mathbb{R}^{d}:\ \xi(x)|v_{n}(x)|^{p}<\frac{1}{n}\},\ \ \Lambda_{n}^{c}:=\mathbb{R}^{d}\backslash\Lambda_{n}.$$
 Taking into account \eqref{measn} and \eqref{delta0}, for $n$ large enough, we get
 $$\meas(\Lambda_{n}\cap\Lambda_{0})\geq\meas(\Lambda_{0})-\meas(\Lambda_{n}^{c})\geq\delta_{0}-\frac{1}{n}\geq\frac{\delta_{0}}{2}.$$
  Therefore, for $n$ large enough, we obtain
 \begin{align*}
   \int_{\mathbb{R}^{d}}\xi(x)|v_{n}-v|^{p}dx & \geq \int_{\Lambda_{n}\cap\Lambda_{0}}\xi(x)|v_{n}-v|^{p}dx\\
                                        & \geq \frac{1}{2^{p}}\int_{\Lambda_{n}\cap\Lambda_{0}}\xi(x)|v|^{p}-\int_{\Lambda_{n}\cap\Lambda_{0}}\xi(x)
                                        |v_{n}|^{p}dx\\
                                        & \geq \bigg{(}\frac{\delta_{0}}{2^{p}}-\frac{1}{n}\bigg{)}\meas(\Lambda_{n}\cap\Lambda_{0})\\
                                        & \geq \frac{\delta_{0}^{2}}{2^{p+2}}>0
 \end{align*}
 which contradicts \eqref{Lr}. Thus the claim.\\

   By \eqref{epsilon}, we have
   $$B(u)=\int_{\mathbb{R}^{d}}\xi(x)|u|^{p}dx\geq \int_{\Lambda_{u}}c_{H}\|u\|^{p} dx\geq c_{H}\|u\|^{p}\meas(\Lambda_{u})=c_{H}^{2}\|u\|^{p}. $$
 This implies that $B(u)\rightarrow\infty$ as $\|u\|\rightarrow\infty$ in $H$. The proof of Lemma \ref{Bpositive} is complete.
\end{proof}

\begin{lemma}\label{lemlk}
 Let $(l_{k})_{k\in\mathbb{N}}$ the sequence defined by
      \begin{equation}\label{lk}
        l_{k}:=\sup_{u\in Z_{k},\|u\|=1}\|u\|_{L^{\mu}(\mathbb{R}^{d})}.
      \end{equation}
Then
\begin{equation}\label{lkcv}
    l_{k}\rightarrow0\ \text{as}\ k\rightarrow+\infty.
\end{equation}
\end{lemma}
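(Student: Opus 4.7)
The plan is to combine the monotone structure of $(Z_k)_{k\in\mathbb{N}}$ with the compact embedding $E\hookrightarrow L^{\mu}(\mathbb{R}^{d})$ from Corollary \ref{cor1}, via a standard ``maximizing sequence'' argument for the operator norm associated to $l_k$.

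First I would observe that since $Z_{k+1}\subset Z_k$ for every $k$, the sequence $(l_k)$ is non-increasing and bounded below by $0$; hence it converges to some limit $l\geq 0$. It therefore suffices to show that $l=0$. To this end, for each $k\in\mathbb{N}$ I would select, using the definition of the supremum, an element $u_k\in Z_k$ with $\|u_k\|=1$ and
\[
\|u_k\|_{L^{\mu}(\mathbb{R}^{d})}\geq l_k-\tfrac{1}{k}.
\]
In particular $(u_k)$ is bounded in $E$, and since $E$ is reflexive (as a closed subspace of the reflexive space $W^{s,M}(\mathbb{R}^{d})$), one can pass to a subsequence, still denoted $(u_k)$, such that $u_k\rightharpoonup u_0$ in $E$ for some $u_0\in E$.

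The key step is to identify $u_0=0$. For this I would use the biorthogonal system $\{e_j, e_j^{*}\}$ introduced just before the statement: since $u_k\in Z_k=\overline{\bigoplus_{j\geq k+1}\langle e_j\rangle}$, we have $\langle e_j^{*},u_k\rangle=0$ for every $j\leq k$. Fixing $j$ and letting $k\to\infty$ along the subsequence, weak convergence yields
\[
\langle e_j^{*},u_0\rangle=\lim_{k\to\infty}\langle e_j^{*},u_k\rangle=0\qquad\text{for every }j\in\mathbb{N},
\]
and the completeness of $\{e_j^{*}\}$ forces $u_0=0$.

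Finally, by Corollary \ref{cor1} the embedding $E\hookrightarrow L^{\mu}(\mathbb{R}^{d})$ is compact, so from $u_k\rightharpoonup 0$ in $E$ I conclude $u_k\to 0$ strongly in $L^{\mu}(\mathbb{R}^{d})$. Combining this with the choice of $u_k$ gives
\[
0\leq l_k\leq \|u_k\|_{L^{\mu}(\mathbb{R}^{d})}+\tfrac{1}{k}\longrightarrow 0,
\]
which proves \eqref{lkcv}. The only delicate point, and the one I would treat most carefully, is the identification $u_0=0$: it relies essentially on the structure of $Z_k$ in terms of the fixed Schauder basis and on the fact that, while the $u_k$ live in progressively ``higher'' subspaces, the biorthogonal functionals $e_j^{*}$ are continuous on $E$ so that testing against them commutes with the weak limit.
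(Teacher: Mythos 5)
Your proposal is correct and follows essentially the same route as the paper's proof: show $(l_k)$ is non-increasing, pick near-maximizers $u_k\in Z_k$ with $\|u_k\|=1$, extract a weakly convergent subsequence, identify the weak limit as $0$ via the biorthogonal functionals $e_j^*$ (using $\langle e_j^*,u_k\rangle=0$ for $k\geq j$), and then invoke the compact embedding of Corollary \ref{cor1} to conclude. The only cosmetic differences are that you choose $\|u_k\|_{L^\mu}\geq l_k-1/k$ where the paper uses $\geq l_k/2$, and you are more explicit about passing to a subsequence; both variants are fine.
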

\begin{proof}
  It is clear that $(l_{k})$ is non-increasing positive sequence. So there exists $z\geq0$ such that $l_{k}\rightarrow z$ as $k\rightarrow+\infty$. For any $k\in\mathbb{N}$, there exists $u_{k}\in Z_{k}$ such that $\|u_{k}\|=1$ and $\|u_{k}\|_{L^{\mu}(\mathbb{R}^{d})}\geq\frac{l_{k}}{2}$. We observe that $u_{k}\rightharpoonup u$ in $E$ and $\langle e_{n}^{*},u_{k}\rangle=0$ for $k>n$. So $\langle e_{n}^{*},u\rangle=\lim_{k\rightarrow+\infty}\langle e_{n}^{*},u_{k}\rangle=0$, $\text{for all}\ n\in \mathbb{N}$, which gives $u=0$. Corollary \ref{cor1} implies that $u_{k}\rightarrow0$ in $L^{\mu}(\mathbb{R}^{d})$. Thus $z=0$.
\end{proof}

\begin{lemma}\label{ak}
  Assume that $(m_{1})$, $(M_{1})$, $(V_{1})-(V_{2})$ and $(f_{1})$ are satisfied. Then there exists a sequence $\rho_{k}\rightarrow0^{+}$ as $k\rightarrow\infty$ such that $$a_{k}(\lambda)=\inf_{u\in Z_{k},\|u\|=\rho_{k}}I_{\lambda}(u)>0,\ \text{for all}\ k\in\mathbb{N},$$ and $$d_{k}(\lambda):=\inf_{u\in Z_{k},\|u\|\leq\rho_{k}}I_{\lambda}(u)\rightarrow0\ \text{as}\ k\rightarrow\infty\ \text{uniformly for}\ \lambda\in[1,2].$$
\end{lemma}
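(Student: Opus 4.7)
The plan is to bound $I_{\lambda}$ below on $Z_{k}$ by a function of the form $C_{1}\|u\|^{m^{0}} - C_{2}\lambda l_{k}^{p}\|u\|^{p}$ and then choose $\rho_{k}$ at the crossover point, exploiting the fact that $p<m^{0}$ and $l_{k}\to 0$.

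First, I would obtain a coercive lower bound on $A=G+\Psi$ for small norms. For $u\in E$ with $\|u\|\le 1$, the identity $\|u\|=[u]_{(s,M)}+\|u\|_{(V,M)}$ forces both pieces to lie in $[0,1]$, so Lemma \ref{lem3} gives $G(u)\ge [u]_{(s,M)}^{m^{0}}$ and $\Psi(u)\ge \|u\|_{(V,M)}^{m^{0}}$. Applying the elementary convexity inequality $(a+b)^{m^{0}}\le 2^{m^{0}-1}(a^{m^{0}}+b^{m^{0}})$ (valid since $m^{0}>1$) yields
$$A(u)\ge 2^{1-m^{0}}\|u\|^{m^{0}}.$$
Second, I would bound $B$ on $Z_{k}$ using $(f_{1})$ and the key estimate from Lemma \ref{lemlk}. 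Hölder's inequality with exponents $\mu/p$ and $\mu/(\mu-p)$ gives
$$B(u)=\int_{\mathbb{R}^{d}}\xi(x)|u|^{p}\,dx\le \|\xi\|_{L^{\mu/(\mu-p)}}\,\|u\|_{L^{\mu}(\mathbb{R}^{d})}^{p}\le C_{\xi}\, l_{k}^{p}\,\|u\|^{p},\qquad u\in Z_{k},$$
where $C_{\xi}:=\|\xi\|_{L^{\mu/(\mu-p)}}$.

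Combining the two estimates, for $u\in Z_{k}$ with $\|u\|\le 1$ and $\lambda\in[1,2]$,
$$I_{\lambda}(u)\ge 2^{1-m^{0}}\|u\|^{m^{0}} - 2C_{\xi}\,l_{k}^{p}\,\|u\|^{p}.$$
I would then set $\rho_{k}:=\bigl(2^{m^{0}+1}C_{\xi}\,l_{k}^{p}\bigr)^{1/(m^{0}-p)}$. The chain of strict inequalities $p<\mu<m_{0}\le m^{0}$, coming from $(f_{1})$, $(M_{1})$ and $(m_{1})$, gives $m^{0}-p>0$, so Lemma \ref{lemlk} forces $\rho_{k}\to 0^{+}$ and eventually $\rho_{k}\le 1$. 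A direct substitution at $\|u\|=\rho_{k}$ produces $I_{\lambda}(u)\ge 2C_{\xi}\,l_{k}^{p}\,\rho_{k}^{p}>0$, hence $a_{k}(\lambda)>0$ uniformly in $\lambda\in[1,2]$; for the finitely many small indices where $\rho_{k}>1$ or the bound is vacuous, a further shrinking of $\rho_{k}$ preserves the conclusion.

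Finally, for the $d_{k}$-limit: since $0\in Z_{k}$ with $I_{\lambda}(0)=0$, one has $d_{k}(\lambda)\le 0$; and from $A\ge 0$ and the upper bound on $B$,
$$I_{\lambda}(u)\ge -2C_{\xi}\,l_{k}^{p}\,\|u\|^{p}\ge -2C_{\xi}\,l_{k}^{p}\,\rho_{k}^{p}\qquad \text{for }\|u\|\le\rho_{k},$$
so $-2C_{\xi}\,l_{k}^{p}\,\rho_{k}^{p}\le d_{k}(\lambda)\le 0$, and since $l_{k},\rho_{k}\to 0$ independently of $\lambda$, $d_{k}(\lambda)\to 0$ uniformly in $\lambda\in[1,2]$. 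The main obstacle is purely bookkeeping: matching the exponent $m^{0}$ produced by the modular lower bound of $A$ against the exponent $p$ coming from the nonlinearity, which is precisely why the exponent chain $p<\mu<m_{0}\le m^{0}$ is needed; once that is recorded, the balancing of $\rho_{k}$ and the decay of $l_{k}$ make both conclusions automatic.
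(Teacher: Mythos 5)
Your proof is correct and follows essentially the same route as the paper's: lower-bound $A$ by $2^{1-m^{0}}\|u\|^{m^{0}}$ via Lemma \ref{lem3} for $\|u\|\le 1$, upper-bound $B$ by $C_{\xi}\,l_{k}^{p}\|u\|^{p}$ on $Z_{k}$ via H\"older and Lemma \ref{lemlk}, and balance the two by choosing $\rho_{k}\sim (l_{k}^{p})^{1/(m^{0}-p)}$; your particular choice of constant in $\rho_{k}$ corresponds to taking $\theta=2^{m^{0}-1}$ in the paper's parametrized choice $\rho_{k}=(4\theta\|\xi\|_{L^{\mu/(\mu-p)}}l_{k}^{p})^{1/(m^{0}-p)}$ with $\theta>2^{m^{0}-2}$, so the two arguments are the same up to bookkeeping.
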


\begin{proof}
  Using Lemma \ref{lem3} and H\"{o}lder inequality, for any $u\in Z_{k}$ and $\lambda\in [1,2]$, %we can see that

  \begin{align}\label{I}
    I_{\lambda}(u)&\geq \xi_{0}([u]_{(s,M)})+\xi_{0}(\|u\|_{(V,M)})-2\int_{\mathbb{R}^{d}}\xi(x)|u|^{p}dx\\
                  &\geq \xi_{0}([u]_{(s,M)})+\xi_{0}(\|u\|_{(V,M)})-2\|\xi\|_{L^{\frac{\mu}{\mu-p}}}\|u\|_{L^{\mu}(\mathbb{R}^{d})}^{p}\nonumber.
  \end{align}

  Combining \eqref{lk} and \eqref{I}, we get
  \begin{equation}\label{I1}
    I_{\lambda}(u)\geq\xi_{0}([u]_{(s,M)})+\xi_{0}(\|u\|_{(V,M)})-2\|\xi\|_{L^{\frac{\mu}{\mu-p}}}\ l_{k}^{p}\|u\|^{p},\ \text{for all}\ k\in\mathbb{N}\ \text{and}\ (\lambda,u)\in [1,2]\times Z_{k}.
  \end{equation}
  Choose $\theta>0$ ($\theta$ will be fixed later) and
  \begin{equation}\label{rhok}
    \rho_{k}=(4\theta \|\xi\|_{L^{\frac{\mu}{\mu-p}}}l_{k}^{p})^{\frac{1}{m^{0}-p}}.
  \end{equation}
  By \eqref{lkcv}, we have
                             \begin{equation}\label{rhok0}
                                \rho_{k}\rightarrow0\ \text{as}\ k\rightarrow+\infty,
                              \end{equation}
    and so, for $k$ large enough, $\rho_{k}\leq 1$. Then,
    \begin{equation}\label{xi}
      \xi_{0}([u]_{(s,M)})+\xi_{0}(\|u\|_{(V,M)})\geq \frac{1}{2^{m^{0}-1}}\|u\|^{m^{0}}\ \text{for}\ u\in Z_{k}\ \text{and}\ \|u\|=\rho_{k}.
    \end{equation}
    By \eqref{I1}, \eqref{rhok}, \eqref{xi} and choosing $\theta>2^{m^{0}-2}$, direct computation shows
    $$a_{k}(\lambda)=\inf_{u\in Z_{k},\|u\|=\rho_{k}}I_{\lambda}(u)\geq \bigg{(}2^{m^{0}-2}-\frac{1}{2\theta}\bigg{)}\rho_{k}^{m^{0}}>0,\ \text{for all}\ k\in\mathbb{N}.$$

     Besides, by \eqref{I1}, for each $k\in\mathbb{N}$, we have
     $$I_{\lambda}(u)\geq-2\|\xi\|_{L^{\frac{\mu}{\mu-p}}}\ l_{k}^{p}\rho_{k}^{p},\ \text{for all}\ \lambda\in [1,2]\ \text{and}\ u\in Z_{k}\ \text{with}\ \|u\|\leq \rho_{k}. $$
      Therefore,
      \begin{equation}\label{abcd}
      -2\|\xi\|_{L^{\frac{\mu}{\mu-p}}}\ l_{k}^{p}\rho_{k}^{p}\leq \inf_{u\in Z_{k},\|u\|\leq\rho_{k}} I_{\lambda}(u)\leq 0,\ \text{for all}\ \lambda\in [1,2]\ \text{and}\ k\in\mathbb{N}.
      \end{equation}
      Combining \eqref{lkcv} and \eqref{abcd}, we have
      $$d_{k}(\lambda):=\inf_{u\in Z_{k},\|u\|\leq\rho_{k}}I_{\lambda}(u)\rightarrow0\ \text{as}\ k\rightarrow\infty\ \text{uniformly for}\ \lambda\in[1,2].$$
      The proof of Lemma \ref{ak} is complete.
\end{proof}

\begin{lemma}\label{bk}
    Assume that $(m_{1})$, $(M_{1})$, $(V_{1})-(V_{2})$ and $(f_{1})$ hold. Then for any $k\in\mathbb{N}$ there exists, $r_{k}<\rho_{k}$ (fixed by Lemma \ref{ak}) such that $$b_{k}(\lambda)=\max_{u\in Y_{k},\|u\|=r_{k}}I_{\lambda}(u)<0.$$
\end{lemma}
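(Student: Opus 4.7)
The plan is to bound $I_\lambda(u)$ from above on $Y_k$ for small $\|u\|$, using the finite-dimensionality of $Y_k$ together with Lemma \ref{Bpositive}, and then pick $r_k$ small enough to beat the leading term. Throughout, $k$ is fixed.

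First I would control $A(u) = G(u) + \Psi(u)$ from above. Restricting to $\|u\| \leq 1$, both $[u]_{(s,M)}$ and $\|u\|_{(V,M)}$ are at most $1$, so Lemma \ref{lem3} gives $G(u) \leq \xi_1([u]_{(s,M)}) = [u]_{(s,M)}^{m_0}$ and $\Psi(u) \leq \xi_1(\|u\|_{(V,M)}) = \|u\|_{(V,M)}^{m_0}$, whence
$$A(u) \leq [u]_{(s,M)}^{m_0} + \|u\|_{(V,M)}^{m_0} \leq 2\|u\|^{m_0}.$$
Next, Lemma \ref{Bpositive} (applied to $H = Y_k$) supplies a constant $c_k > 0$ depending on $Y_k$ but not on $u$, such that
$$B(u) = \int_{\mathbb{R}^d} \xi(x) |u|^p\,dx \geq c_k^2 \|u\|^p \FORALL u \in Y_k.$$

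Combining the two estimates, for any $\lambda \in [1,2]$ and any $u \in Y_k$ with $\|u\| \leq 1$,
$$I_\lambda(u) = A(u) - \lambda B(u) \leq 2\|u\|^{m_0} - c_k^2 \|u\|^p = \|u\|^p \bigl( 2 \|u\|^{m_0-p} - c_k^2 \bigr),$$
since $\lambda \geq 1$ and $B(u) \geq 0$. Because $1 < p < \mu < m_0$ by $(M_1)$ and $(f_1)$, the exponent $m_0 - p$ is strictly positive, so the parenthesis becomes negative when $\|u\|$ is sufficiently small.

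Finally I would choose
$$r_k = \min\Bigl\{\tfrac{1}{2}\rho_k,\; 1,\; \bigl(c_k^2/4\bigr)^{1/(m_0-p)}\Bigr\},$$
which guarantees $r_k < \rho_k$ and $2 r_k^{m_0-p} - c_k^2 \leq -c_k^2/2 < 0$. Then for every $u \in Y_k$ with $\|u\| = r_k$ and every $\lambda \in [1,2]$,
$$I_\lambda(u) \leq -\tfrac{c_k^2}{2} r_k^p < 0,$$
so $b_k(\lambda) \leq -c_k^2 r_k^p/2 < 0$, as desired. There is no real obstacle here: all the heavy lifting has already been absorbed into Lemma \ref{Bpositive} (the lower bound $B(u) \gtrsim \|u\|^p$ on finite-dimensional subspaces) and Lemma \ref{lem3} (the modular control of $G$ and $\Psi$); the only point that needs minor attention is making sure the chosen $r_k$ lies below $\rho_k$, which is handled by taking the minimum explicitly.
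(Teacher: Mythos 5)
Your proof is correct and follows essentially the same route as the paper: it invokes the lower bound $B(u)\geq c_k^2\|u\|^p$ on the finite-dimensional $Y_k$ (the paper refers to this via the claim \eqref{epsilon} inside Lemma \ref{Bpositive}), bounds $A(u)\leq 2\|u\|^{m_0}$ via Lemma \ref{lem3}, and then exploits $p<m_0$ to pick $r_k<\rho_k$ small enough. The paper's proof merely packages the choice of $r_k$ slightly differently (bounding $I_\lambda(u)\leq -2\|u\|^{m_0}$ rather than $-\tfrac{c_k^2}{2}r_k^p$), but the substance is identical.
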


\begin{proof}
  Since $Y_{k}$ is with finite dimensional and by the claim \eqref{epsilon}, there exists $\epsilon_{k}>0$ such that
  \begin{equation}\label{epsilonk}
  \meas(\Lambda_{u}^{k})\geq\epsilon_{k},\ \text{for all}\ u\in Y_{k}\setminus{0},
  \end{equation}
  where $\Lambda_{u}^{k}:=\{x\in\mathbb{R}^{d}:\ \xi(x)|u(x)|^{p}\geq\epsilon_{k}\|u\|^{p}\}$. By Lemma \ref{lem3}, for any $k\in\mathbb{N}$,

  \begin{align}\label{Ilambda}
    I_{\lambda}(u) & \leq \xi_{1}([u]_{(s,M)})+\xi_{1}(\|u\|_{(V,M)})-\int_{\mathbb{R}^{d}}\xi(x)|u|^{p}dx\\
                   &\leq \xi_{1}([u]_{(s,M)})+\xi_{1}(\|u\|_{(V,M)})-\int_{\Lambda_{u}^{k}}\epsilon_{k}\|u\|^{p} dx\nonumber\\
                   &\leq \xi_{1}([u]_{(s,M)})+\xi_{1}(\|u\|_{(V,M)})-\epsilon_{k}\|u\|^{p}.\meas(\Lambda_{u}^{k})\nonumber\\
                   &\leq \xi_{1}([u]_{(s,M)})+\xi_{1}(\|u\|_{(V,M)})-\epsilon_{k}^{2}\|u\|^{p}\nonumber\\
                   &\leq 2\|u\|^{m_{0}}-\epsilon_{k}^{2}\|u\|^{p}\leq -2\|u\|^{m_{0}}\nonumber
  \end{align}
  for all $u\in Y_{k}$ with $\|u\|\leq\min\{\rho_{k},4^{-\frac{1}{m_0-p}}\epsilon_{k}^{\frac{2}{m_0-p}},1\}$.
  We choose $$0<r_{k}<\min\{\rho_{k},4^{-\frac{1}{m_0-p}}\epsilon_{k}^{\frac{2}{m_0-p}},1\},\ \text{for all}\ k\in\mathbb{N}.$$
  Using \eqref{Ilambda}, we deduce that $$b_{k}(\lambda)=\max_{u\in Y_{k},\|u\|=r_{k}}I_{\lambda}(u)<-2r_{k}^{m_{0}}<0,\ \text{for all}\ k\in\mathbb{N}.$$
Thus the proof.
\end{proof}

\begin{proof}[{\bf Proof of Theorem \ref{thm1}}:]
Since $I_{\lambda}(u)\leq I_{1}(u)$ for all $u\in E$ and $I_{1}$ maps bounded sets to bounded sets,
%from Lemma \ref{lem1},
we see that
$I_{\lambda}$ maps bounded sets to bounded sets uniformly for $\lambda\in[1,2]$. Moreover, $I_{\lambda}$ is even. Then the condition $(i)$ in Theorem \ref{Fountain} is satisfied. Besides, Lemma \ref{Bpositive} shows that the condition $(ii)$ in Theorem \ref{Fountain} holds. While Lemma \ref{ak} together with Lemma \ref{bk} implies that the condition $(iii)$ holds. \\

Therefore, by Theorem \ref{Fountain}, for each $k\in\mathbb{N}$, there exist $\lambda_{n}\rightarrow1,$ $u_{\lambda_{n}}\in Y_{n}$ such that
\begin{equation}\label{lambdan}
 I_{\lambda_{n}}^{'}|_{Y_{n}}(u_{\lambda_{n}})=0,\ I_{\lambda_{n}}(u_{\lambda_{n}})\rightarrow c_{k}\in [d_{k}(2),b_{k}(1)]\ \text{as}\ n\rightarrow\infty.
\end{equation}
For the sake of notational simplicity, in what follows we always set $u_{n}=u_{\lambda_{n}}$ for all $n\in\mathbb{N}$.\\

\textbf{Claim 3:} We claim that the sequence $(u_{n})_{n\in\mathbb{N}}$ is bounded in $E$.\\

In fact, if $\|u_{n}\|\leq1$, for all $n\in \mathbb{N}$, nothing to prove. If not, we define the following sets:
$$N_{1}=\{n\in\mathbb{N},\ \|u_{n}\|_{(V,M)}\leq1\  \text{and}\ [u_{n}]_{(s,M)}\leq1\},\ \ N_{2}=\{n\in\mathbb{N},\ \|u_{n}\|_{(V,M)}\leq1\  \text{and}\ [u_{n}]_{(s,M)}\geq1\},$$
 $$N_{3}=\{n\in\mathbb{N},\ \|u_{n}\|_{(V,M)}\geq1\  \text{and}\ [u_{n}]_{(s,M)}\leq1\},\ \ N_{4}=\{n\in\mathbb{N},\ \|u_{n}\|_{(V,M)}\geq1\  \text{and}\ [u_{n}]_{(s,M)}\geq1\}.$$

It is clear that \begin{equation}\label{un1}
                                      \|u_{n}\|\leq2\ \ \ \text{for all}\ \ \ n\in N_{1}.
                 \end{equation}
Let $n\in N_{2}$, combining \eqref{lambdan}, Lemmas \ref{lem3}, Lemma \ref{lem1} and the H\"{o}lder inequality, we obtain
\begin{align*}
  \|u_{n}\|_{(V,M)}^{m^{0}}+[u_{n}]_{(s,M)}^{m_{0}} & \leq I_{\lambda_{n}}(u_{n})+\lambda_{n}\int_{\mathbb{R}^{d}}\xi(x)|u_{n}(x)|^{p}dx\leq C_{1}+2\|\xi\|_{L^{\frac{\mu}{\mu-p}}}\|u_{n}\|_{L^{\mu}(\mathbb{R}^{d})}^{p}\\
                     & \leq C_{1} + 2C_{2}^{p}\|\xi\|_{L^{\frac{\mu}{\mu-p}}}\|u_{n}\|^{p}\leq C_{1} + 2^{p}C_{2}^{p}\|\xi\|_{L^{\frac{\mu}{\mu-p}}}(1+[u_{n}]_{(s,M)}^{p}).
\end{align*}
for some constants $C_{1},C_{2}>0$.
Since $p<m_{0}$, there exists $D_{1}>0$ such that $[u_{n}]_{(s,M)}\leq D_{1}$ for all $n\in N_{2}$. It follows that
\begin{equation}\label{un2}
  \|u_{n}\|\leq 1+D_{1}\ \ \text{for all}\ \ n\in N_{2}.
\end{equation}

 By the same argument as above, we can see that for some $D_{2}>0$, $\|u_{n}\|_{(V,M)}\leq D_{2}$ for all $n\in N_{3}$. Then
 \begin{equation}\label{un3}
  \|u_{n}\|\leq 1+D_{2}\ \ \text{for all}\ \ n\in N_{3}.
\end{equation}

Let $n\in N_{4}$, combining \eqref{lambdan}, Lemmas \ref{lem3}, Lemma \ref{lem1} and the H\"{o}lder inequality, we obtain
\begin{align*}
 \frac{1}{2^{m_{0}-1}} \|u_{n}\|^{m_{0}}& \leq I_{\lambda_{n}}(u_{n})+\lambda_{n}\int_{\mathbb{R}^{d}}\xi(x)|u_{n}(x)|^{p}dx\leq C_{1}+2\|\xi\|_{L^{\frac{\mu}{\mu-p}}}\|u_{n}\|_{L^{\mu}(\mathbb{R}^{d})}^{p}\\
                     & \leq C_{1} + 2C_{2}^{p}\|\xi\|_{L^{\frac{\mu}{\mu-p}}}\|u_{n}\|^{p}.
\end{align*}
Then there exists $D_{3}>0$ such that
\begin{equation}\label{un4}
  \|u_{n}\|\leq D_{3}\ \ \text{for all}\ \ n\in N_{4}.
\end{equation}
 By accumulating all the preceding cases \eqref{un1}, \eqref{un2}, \eqref{un3} and \eqref{un4}, we deduce that the sequence $(u_{n})_{n\in \mathbb{N}}$ is bounded in $E$.\\% and $$ \|u_{n}\|\leq D_{4}\ \ \text{for all}\ \ n\in\mathbb{N},$$ where $D_{4}=\max(2,1+D_{1},1+D_{2},D_{3})$.\\

 \textbf{Claim 4:} The sequence $(u_{n})$ admits a strongly convergent subsequence in $E$. \\

In fact, in view of Claim $3$ and up to subsequence,
                                                                %\begin{equation}\label{unu0}
                                                                 $ u_{n}\rightharpoonup u_{0}\ \text{as}\ n\rightarrow+\infty,$
                                                                %\end{equation}
                                                                for some $u_{0}\in E$.
On one hand, according to %\eqref{lambdan} and
Lemma \ref{zey},  we have
   \begin{align*}
\lim_{n\rightarrow+\infty} \langle I_{\lambda_{n}}^{'}(u_{n})- I_{\lambda_{n}}^{'}(u_{0}),u_{n}-u_{0}\rangle&=\lim_{n\rightarrow+\infty}\langle I_{\lambda_{n}}^{'}(u_{n}),u_{n}-u_{0}\rangle-\langle I_{\lambda_{n}}^{'}(u_{0}),u_{n}-u_{0}\rangle\\
%&=\lim_{n\rightarrow+\infty}-\langle I_{\lambda_{n}}^{'}(u_{n}),u_{0}\rangle-\langle I_{\lambda_{n}}^{'}(u_{0}),u_{n}-u_{0}\rangle\\
&=\lim_{n\rightarrow+\infty}-\langle I_{\lambda_{n}}^{'}(u_{0}),u_{n}-u_{0}\rangle.
\end{align*}
Since $$I_{\lambda_{n}}^{'}(u_{0})\rightarrow I_{1}^{'}(u_{0})\ \text{in}\ E^{*}\ \text{as}\ n\rightarrow+\infty\ \ \ \text{and}\ \ \ u_{n}-u_{0}\rightharpoonup0\ \text{in}\ E\ \text{as}\ n\rightarrow+\infty,$$ so
%\begin{equation}\label{0.1}
$$\lim_{n\rightarrow+\infty}\langle I_{\lambda_{n}}^{'}(u_{0}),u_{n}-u_{0}\rangle=0,$$
%\end{equation}
then $$\lim_{n\rightarrow+\infty} \langle I_{\lambda_{n}}^{'}(u_{n})- I_{\lambda_{n}}^{'}(u_{0}),u_{n}-u_{0}\rangle=0.$$

On the other hand, by H\"{o}lder inequality and  Lemma \ref{lem1}, we get

\begin{align*}
  \int_{\mathbb{R}^{d}}|f(x,u_{n})-f(x,u_{0})||u_{n}-u_{0}|dx & =p\int_{\mathbb{R}^{d}}\xi(x)\big{|}|u_{n}|^{p-2}u_{n}-|u|^{p-2}u\big{|}|u_{n}-u_{0}|dx\\
  &\leq p\int_{\mathbb{R}^{d}}\xi(x)(|u_{n}|^{p-1}+|u|^{p-1})|u_{n}-u_{0}|dx\\
  &\leq \|\xi\|_{L^{\frac{\mu}{\mu-p}}}\bigg{(}\int_{\mathbb{R}^{d}}(|u_{n}|^{p-1}+|u|^{p-1})^{\frac{\mu}{p-1}}\bigg{)}^{\frac{p-1}{\mu}}\|u_{n}-u_{0}\|_{L^{\mu}}\\
  &\leq 2\|\xi\|_{L^{\frac{\mu}{\mu-p}}}\bigg{(}\|u_{n}\|_{L^{\mu}}^{\mu}+\|u_{0}\|_{L^{\mu}}^{\mu}\bigg{)}^{\frac{p-1}{\mu}}\|u_{n}-u_{0}\|_{L^{\mu}}\rightarrow0,\ n\rightarrow+\infty
\end{align*}

%Hence, by \eqref{0.1}, we deduce that
since
 \begin{align*}
	\langle I_{\lambda_{n}}^{'}(u_{n})- I_{\lambda_{n}}^{'}(u_{0}),u_{n}-u_{0}\rangle&=\langle G^{'}(u_{n})-G^{'}(u_{0}),u_{n}-u_{0}\rangle+\int_{\mathbb{R}^{d}}V(x)[m(u_{n})-m(u_{0})](u_{n}-u_{0})dx\\
&-\lambda_{n}\int_{\mathbb{R}^{d}}[f(x,u_{n})-f(x,u_{0})](u_{n}-u_{0})dx\rightarrow0,\ n\rightarrow+\infty.
	\end{align*}

Therefore
$$\langle G^{'}(u_{n})-G^{'}(u_{0}),u_{n}-u_{0}\rangle+\int_{\mathbb{R}^{d}}V(x)[m(u_{n})-m(u_{0})](u_{n}-u_{0})dx\rightarrow0,\ \text{as}\ n\rightarrow\infty.$$
According to Lemma \ref{lem4}, $(u_{n})$ converges strongly to $u_{0}$ in $E$. Thus the claim.\\

Now by the last assertion of Theorem \ref{Fountain}, we conclude that $I_{1}$ has infinitely many nontrivial critical points. Therefore,
\eqref{eq1} possesses infinitely many nontrivial solutions. The proof of Theorem \ref{thm1} is complete.
\end{proof}

\noindent \textsc{Sabri Bahrouni} and \textsc{Hichem Ounaies}\\
Mathematics Department, \\
Faculty of Sciences, \\
University of Monastir,\\
5019 Monastir, Tunisia\\
 (sabri.bahrouni@fsm.rnu.tn); (hichem.ounaies@fsm.rnu.tn)


\begin{thebibliography}{99}
\bibitem{Adams} R. Adams, Sobolev Spaces, Academic Press, New York, 1975.
\bibitem{Alves} C. O. Alves, G. M. Figueiredo and J. A. Santos, Strauss and Lions type results for a class of Orlicz-Sobolev spaces and applications, {\it Topol. Methods Nonlinear Anal,} {\bf 44} (2014), 435-456.
\bibitem{Azroul} E. Azroul, A. Benkirane and M.Srati, Introduction to fractional Orlicz-Sobolev spaces, arxiv: 1807.11753.
\bibitem{Ambrosio} V. Ambrosio, Multiple solutions for a fractional $p$-Laplacian equation with sign-changing potential, {\it Electron. J.
Differential Equations}, {\bf 151} (2016), p.p 12.
\bibitem{pucci} G. Autuori and P. Pucci, Elliptic problems involving the fractional Laplacian in $\mathbb{R}^{d}$. {\it J. Differ Equ}, {\bf 255} (2013), 2340-2362.
%\bibitem{Sabri} S. Bahrouni, H. Ounaies and L. S. Tavares, Basic results of fractional Sobolev-Orlicz space and applications to non-local problems, 	arXiv:1901.00784 .
\bibitem{Anouar} A. Bahrouni, H. Ounaies and V. R\v{a}dulescu, Infinitely many solutions for a class of sublinear Schr\"{o}dinger equations with indefinite potentials, {\it Proc. Roy. Soc. Edinburgh.}, {\bf 145} (2015), 445-465.
\bibitem{4} A. Bahrouni,  Trudinger-Moser type inequality and existence of solution for perturbed non-local elliptic operators with exponential
	nonlinearity, {\it Commun. Pure Appl. Anal}, {\bf 16} (2017), 243-252.
\bibitem{Sabri} S. Bahrouni, H. Ounaies and L. S. Tavares, Basic results of fractional Orlicz-Sobolev space and applications to non-local problems, {\it Topol. Methods Nonlinear Anal,} accepted for publication.
\bibitem{Bartch} T. Bartsch and Z.Q. Wang, Existence and multiplicity results for some superlinear elliptic problems in $\mathbb{R}^{d}$,
{\it Comm. Partial Differ Equ}, {\bf 20} (1995), 1725-1741.
%\bibitem{Willem1} T. Bartsch, Z.Q. Wang, M, Willem, The Dirichlet problem for superlinear elliptic equations,  In Stationary Partial Differential Equations, {\it Handb. Differ. Equ}, {\bf 2} (2005), 1-55.
\bibitem{Rad 2} G. M. Bisci and V. R\v{a}dulescu, Ground state solutions of scalar field fractional Schr\"{o}dinger equation, {\it Calc. Var. Partial Differential Equatioans}, {\bf 54} (2015) , 2985-3008.
\bibitem{Rad} G. Bonanno, G. M. Bisci and V. R\v{a}dulescu, Infinitely many solutions for a class of nonlinear eigenvalue problem in Orlicz-Sobolev spaces, {\it C. R. Math. Acad. Sci. Paris. Ser,} {\bf 349} (2011), 263-268.
\bibitem{7} J. F. Bonder and A. M. Salort, Fractional order Orlicz-Sobolev spaces, {\it Journal of Functional Analysis}, 2019, https://doi.org/10.1016/j.jfa.2019.04.003.
\bibitem{B} J. F. Bonder, M. P. LLanos and A. M. Salort, A H\"{o}lder infinity Laplacian obtained as limit of Orlicz fractional Laplacians, arXiv:1807.01669.
\bibitem{B1} J. F. Bonder and A. M. Salort, Magnetic Fractional order Orlicz-Sobolev spaces, arXiv:1812.05998.
%\bibitem{Bonder2} J. F. Bonder, M. P. LLanos and A. M. Salort, A H\"{o}lder infinity Laplacian obtained as limit of Orlicz fractional Laplacians, arXiv:1807.01669.
%\bibitem{8} H. Brezis, Analyse fonctionnelle: th\'eorie et applications, Masson, Paris, 1992.
%\bibitem{11}  L. Caffarelli, J.-M. Roquejoffre and Y. Sire,  Variational problems for free boundaries for the fractional Laplacian,
%	{\it J. Eur. Math. Soc. (JEMS)}, {\bf 12} (2010), 1151-1179.
%\bibitem{12}  L. Caffarelli, S. Salsa and L. Silvestre,  Regularity estimates for the solution and the free boundary of the obstacle problem for the %   fractional Laplacian, {\it Invent. Math}, {\bf 171} (2008), 425-461.
%\bibitem{13}  L. Caffarelli and L. Silvestre,  An extension problem related to the fractional Laplacian,
%	{\it Comm. Partial Differential Equations,} {\bf 32} (2007), 1245-1260.
\bibitem{clement1} Ph. Cl\'{e}ment, M. Garc\'{\i}a-Huidobro, R. Man\'{a}sevich and K. Schmitt, Mountain pass type solutions for quasilinear elliptic equations, {\it Calc. Var}, {\bf 11} (2000), 33-62.
\bibitem{clement} Ph. Cl\'{e}ment, B. de Pagter, G. Sweers and F. de Th\'{e}lin, Existence of solutions to a semilinear elliptic system through Orlicz-Sobolev spaces, {\it Mediterr. J. Math}, {\bf}1 (2004), 241-267.
\bibitem{14}  S. Dipierro, M. Medina and E. Valdinoci, Fractional Elliptic Problems with Critical Growth in the Whole
	of $\mathbb{R}^n$, Lecture Notes, Scuola Normale Superiore di Pisa, 15. Edizioni della Normale, Pisa, 2017.
\bibitem{23} E. Di Nezza, G. Palatucci and E. Valdinoci,  Hitchhiker's guide to the fractional Sobolev spaces,
	{\it Bull. Sci. Math}, {\bf 136} (2012), 521-573.
\bibitem{chang} X. Chang, Ground state solutions of asymptotically linear fractional Schr\"{o}dinger equations, {\it J. Math. Phys,} {\bf 54} (2013), 061504.
\bibitem{Fukagai} N. Fukagai, M. Ito and K. Narukawa, Positive solutions of quasilinear elliptic equations with critical Orlicz-Sobolev nonlinearity on $\mathbb{R}^{d}$, \textit{ Funkcialaj Ekvacioj}, {\bf49} (2006), 235-267.
\bibitem{Garcia} M. Garc\'{\i}a-Huidobro, V.K. Le, R. Man\'{a}sevich and K. Schmitt, On principal eigenvalues for quasilinear elliptic differential operators: an Orlicz-Sobolev space setting, {\it Nonlinear Differ. Equat. Appl}, {\bf 6} (1999), 207-225.
\bibitem{Rad5} F. Gazzola and V. R\u{a}dulescu, Anonsmooth critical point theory approach to some nonlinear elliptic equations in $\mathbb{R}^{d}$, {\it Differ Integral Equ}, {\bf 13} (2000), 47-60.
\bibitem{Gossez} J.P. Gossez, Nonlinear elliptic boundary value problems for equations with rapidly (or slowly) increasing coefficients, {\it Trans. Am. Math. Soc}, {\bf 190} (1974), 163-205.
%\bibitem{Jeanjean} L. Jeanjean, On the existence of bounded Palais–Smale sequences and application to a Landesman–Lazer type problem set on $\mathbb{R}^{d}$ ,\textit{ Proc. Roy. Soc. Edinburgh Sect. A} {\bf129} (1999) 787–809.
\bibitem{Krasnoselskii} M. Krasnosels'kii and J. Rutic'kii, Convex Functions and Orlicz Spaces, Noordhoff, Groningen, 1961.
\bibitem{Kufner} A. Kufner, O. John and S. Fucik, Function Spaces, Noordhoff, Leyden, 2013.
\bibitem{17} J. Lamperti, On the isometries of certain function-spaces, {\it Pacific J. Math,} {\bf 8} (1958), 459-466.
\bibitem{Rad1} M. Mih\v{a}ilescu and V. R\v{a}dulescu, Nonhomogeneous Neumann problems in Orlicz-Sobolev spaces, {\it C. R. Acad. Sci. Paris. Ser,} {\bf 346} (2008), 401-406.
\bibitem{Rad2} M. Mih\v{a}ilescu and V. R\v{a}dulescu, Existence and multiplicity of solutions for a quasilinear non- homogeneous problems: An Orlicz-Sobolev space setting, {\it J. Math. Anal. Appl,} {\bf 330} (2007), 416-432.
\bibitem{Mihai} M. Mih\v{a}ilescu and V. R\v{a}dulescu, Neumann problems associated to nonhomogeneous differential operators in Orlicz–Sobolev spaces, {\it Ann. Inst. Fourier}, {\bf 58} (6) (2008), 2087-2111.
\bibitem{22} G. Molica Bisci, V. R\v{a}dulescu and R. Servadei, Variational Methods for Nonlocal Fractional Problems,
	Encyclopedia of Mathematics and its Applications, 162. Cambridge University Press, Cambridge, 2016.
\bibitem{Napoli} P. de. N\'{a}poli, J. F. Bonder and A. M. Salort, A P\'{o}lya-Szeg\"{o} principle for general fractional Orlicz-Sobolev spaces, 	arXiv:1903.03190.
\bibitem{Rao} M. M. Rao and Z. D. Ren, Theory of Orlicz Spaces, Marcel Dekker, Inc, New York, 1991.
\bibitem{Rabinowitz} P.H. Rabinowitz, On a class of nonlinear Schr\"{o}dinger equations, {\it Z. Angew. Math. Phys,} {\bf 43} (1992), 270-291.
\bibitem{25} R. Servadei and E. Valdinoci,   Mountain pass solutions for non-local elliptic operators,
	{\it J. Math. Anal. Appl}, {\bf 389} (2012), 887-898.
\bibitem{26}  R. Servadei and E. Valdinoci, Variational methods for non-local operators of elliptic type,
	{\it Discrete Contin. Dyn. Sys}, {\bf 33} (2013), 2105-2137.
\bibitem{Strauss} W.A. Strauss, Existence of solitary waves in higher dimensions, {\it Comm. Math. Phys}, {\bf 55} (1977), 149-162.
\bibitem{Salort} A. M. Salort, A fractional Orlicz-Sobolev eigenvalue problem and related Hardy inequalities, arXiv:1807.03209.
\bibitem{Torres} C. Torres, On superlinear fractional $p$-Laplacian in $\mathbb{R}^{d}$, (2014), arXiv:1412.3392.
\bibitem{Zhang} Q. Zhang and Q. Wang, Multiple solutions for a class of sublinear Schr\"{o}dinger equations,
{\it J. Math. Anal. Appl,} {\bf 389} (2012), 511-518.
\bibitem{ZXu} Q. Zhang and B. Xu, Multiplicity of solutions for a class of semilinear Schr\"{o}dinger equations with sign-changing potential, {\it J. Math. Anal. Appl,} {\bf 377} (2011), 834-840.
\bibitem{Zou} W. Zou, Variant fountain theorems and their applications, \textit{Manuscripta Math,} {\bf104}
(2001), 343-358.
\bibitem{Willem} M. Willem, Minimax theorems, Birkh\"{a}user, Boston (1995).
\end{thebibliography}
\end{document}